\newtheorem{theorem}{Theorem}
\newtheorem{lemma}[theorem]{Lemma}
\newtheorem{proposition}{Proposition}
\theoremstyle{definition}
\newcommand{\eqdef}{\overset{\mbox{\tiny{def}}}{=}}
\newcommand{\R}{{\mathbb R}}
\newcommand{\NgE}{{N \ge 3}}
\newcommand{\ang}[1]{ \left< {#1} \right> }
\newcommand{\ext}[1]{ \underline{ {#1} } }
\newcommand{\ind}{ {\mathbf 1}}
\newcommand{\sph}{{\mathbb S}^2}
\newcommand{\nsm}{|}
\newcommand{\nl}{\left|}
\newcommand{\nr}{\right|}
\newcommand{\set}[2]{ \left\{ #1 \ \left| \ #2 \right. \right\} }
\newcommand{\ip}[2]{ \left< #1 , #2 \right>}
\begin{document}
\title[Global Solutions of the Boltzmann Equation without Cutoff]{Global Strong Solutions of the Boltzmann Equation without  Angular Cut-off}

\author[P. T. Gressman]{Philip T. Gressman}
\address{University of Pennsylvania, Department of Mathematics, David Rittenhouse Lab, 209 South 33rd Street, Philadelphia, PA 19104-6395, USA \bigskip} 
\email{gressman at math.upenn.edu}
\urladdr{http://www.math.upenn.edu/~gressman/}
\thanks{P.T.G. was partially supported by the NSF grant DMS-0850791.}

\author[R. M. Strain]{Robert M. Strain}
\email{strain at math.upenn.edu}
\urladdr{http://www.math.upenn.edu/~strain/}
\thanks{R.M.S. was partially supported by the NSF grant DMS-0901463.}

\dedicatory{}

\keywords{Kinetic Theory, Boltzmann equation, long-range interaction, non cut-off, soft potentials, hard potentials, fractional derivatives, anisotropy, Harmonic analysis. \\
\indent 2010 {\it Mathematics Subject Classification.}  35Q20, 35R11, 76P05, 82C40, 35H20, 35B65, 26A33}


\begin{abstract}
We prove the existence and exponential decay of global in time strong solutions to the  Boltzmann equation without any angular cut-off, i.e., for long-range interactions.
We consider perturbations of the Maxwellian equilibrium states and include  the physical cross-sections
arising from an inverse-power intermolecular potential $r^{-(p-1)}$ with $p>3$, and more generally, the full range of angular singularities $s=\nu/2 \in(0,1)$.  
These appear to be the first unique global solutions to this fundamentally important model, which grants a basic example where a range of geometric fractional derivatives occur in a physical model of the natural world. Our methods provide a new  understanding of the effects of grazing collisions in the Boltzmann theory. 
\end{abstract}
\maketitle

\thispagestyle{empty}



\tableofcontents  

\section{Introduction, main theorem, and historical remarks}

In 1872, Boltzmann was able to derive an equation which accurately models the dynamics of a dilute gas; it has since become a cornerstone of statistical physics \cite{MR1313028,MR1307620, MR1014927,MR0156656, MR1942465,MR1379589}.
There are many useful mathematical theories of global solutions for the Boltzmann equation,  and we will start off by  mentioning a brief few.  In 1933, Carleman \cite{MR1555365} proved existence and uniqueness of the spatially homogeneous problem with radial initial data.  For the spatially dependent theories, it was Ukai \cite{MR0363332} in 1974  who proved the existence of global classical solutions with close to equilibrium initial data.  Ten years later,
 Illner-Shinbrot 
\cite{MR760333}  found unique global mild solutions with near vacuum data.
Then in 1989, the work of DiPerna-Lions \cite{MR1014927} established global renormalized weak solutions for initial data without a size restriction.  We also mention recent methods introduced in the linearized regime by Guo \cite{MR2000470} in 2003 and Liu-Yang-Yu \cite{MR2043729} in 2004.
All of these methods and their generalizations apply to 
hard sphere particles 
or  soft particle interactions in which there is a non-physical cut-off of an inherently nonintegrable angular singularity.

When the physically relevant effects of these angular singularities are not cut-off, 
the only global spatially dependent theory we are aware of is the remarkable paper by Alexandre-Villani \cite{MR1857879} from 2002, 
which proves the existence of
 DiPerna-Lions renormalized weak solutions \cite{MR1014927}
 if one can add to the equation a non-negative defect measure.  It is illustrated therein that the mass conservation they prove would imply this defect measure was zero if the solutions were sufficiently regular.  
 At the moment this defect measure appears difficult to characterize \cite[Appendix]{MR1857879}.

Despite the well-known physical and mathematical importance of this problem, it is perhaps the last remaining 
 physically relevant
case in the Boltzmann theory in which, as far as we know, there is no theory of global in time strong solutions for spatially dependent initial data of any kind.  This issue is mentioned particularly in Villani \cite{MR1942465}.
The results herein prove the existence of such solutions for cross sections arising from an an inverse-power intermolecular potential $r^{-(p-1)}$ with $p>3$ and, more generally, for  the full range of angular singularities.

Let us now give a detailed explanation. 
We study  the {\em Boltzmann equation}
  \begin{equation}
  \frac{\partial F}{\partial t} + v \cdot \nabla_x F = {\mathcal Q}(F,F),
  \label{BoltzFULL}
  \end{equation}
where the unknown $F(t,x,v)$ is a nonnegative function.  For each time $t\ge 0$, $F(t, \cdot, \cdot)$ represents the density function of particles in the phase space; some may call $F$ the empirical measure. The spatial coordinates we consider are $x\in\mathbb{T}^3$, and the velocities are $v\in\mathbb{R}^3$.
The  {\em Boltzmann collision operator} ${\mathcal Q}$ 
is a bilinear operator which acts only on the velocity variables $F(v)$ and is local in $(t,x)$, as 
  \begin{equation*}
  {\mathcal Q} (F,F)(v) = 
  \int_{\mathbb{R}^3}  dv_* 
  \int_{\mathbb{S}^{2}}  d\sigma~ 
  B(|v-v_*|, \sigma) \, 
  \big[ F'_* F' - F_* F \big].
  \end{equation*} 
Here we are using the standard shorthand $F = F(v)$, $F_* = F(v_*)$, $F' = F(v')$, 
$F_*^{\prime} = F(v'_*)$. 
In this expression, $v'$, $v' _*$ and $v$, $v_*$ are 
the velocities of a pair of particles before and after collision, and are connected through the following formulas
  \begin{equation}
  v' = \frac{v+v_*}{2} + \frac{|v-v_*|}{2} \sigma, \qquad
  v'_* = \frac{v+v_*}{2} - \frac{|v-v_*|}{2} \sigma,
  \qquad \sigma \in \mathbb{S}^{2}.
  \label{sigma}
  \end{equation}
There are other ways to represent ${\mathcal Q}$ which result from alternate choices for the parameterization of the set of solutions to the physical law of elastic collisions:
\begin{equation}
\begin{array}{rcl}
v+v_* &=& v^{\prime } + v^{\prime }_*,
\\
|v|^2+|v_*|^2 &=& |v^\prime|^2+|v_*^\prime|^2.
\end{array}
\label{energy}
\end{equation}
We specifically  discuss Carleman-type representations 
 in the appendix of this paper.

 The {\em Boltzmann collision kernel} $B(|v-v_*|, \sigma)$ for a monatomic gas is, on physical grounds, a non-negative function which 
only depends on the {\em relative velocity} $|v-v_*|$ and on
the {\em deviation angle}  $\theta$ through 
$\cos \theta = \ip{ k }{ \sigma}$ where $k = (v-v_*)/|v-v_*|$ and $\langle \cdot, \cdot \rangle$ is the usual scalar product in $\mathbb{R}^3$. 
Without loss of generality we may assume that  $B(|v-v_*|, \sigma)$
is supported on $\ip{ k }{ \sigma} \ge 0$, i.e. $0 \le \theta \le \frac{\pi}{2}$.
Otherwise we can  reduce to this situation 
with the following ``symmetrization'': 
$$
\overline{B}(|v-v_*|, \sigma)
=
\left[ 
B(|v-v_*|, \sigma)
+
B(|v-v_*|, -\sigma)
\right]
{\bf 1}_{\ip{ k }{ \sigma} \ge 0}.
$$ 
Above and generally, ${\bf 1}_{A}$ is the usual indicator function of the set $A$.

\subsection{The Collision Kernel}

Our assumptions are as follows:  
 \begin{itemize}
 \item 
We suppose that $B(|v-v_*|, \sigma)$ takes product form in its arguments as
\begin{equation}
B(|v-v_*|, \sigma) =\Phi( |v-v_*| ) \, b(\cos \theta).
\notag
\end{equation}
In general both $b$ and $\Phi$ are non-negative functions. 
 
  \item The angular function $t \mapsto b(t)$ is non-locally integrable; for $c_b >0$ it satisfies
\begin{equation}
\frac{c_b}{\theta^{1+2s}} 
\le 
\sin \theta  b(\cos \theta) 
\le 
\frac{1}{c_b\theta^{1+2s}},
\quad
s \in (0,1),
\quad
   \forall \, \theta \in \left(0,\frac{\pi}{2} \right].
   \label{kernelQ}
\end{equation}
 
  \item The kinetic factor $z \mapsto \Phi(|z|)$ satisfies for some $C_\Phi >0$
\begin{equation}
\Phi( |v-v_*| ) =  C_\Phi  |v-v_*|^\gamma , \quad \gamma > -\min \{ 2s, 3/2\}.
\label{kernelP}
\end{equation}
Notice that $\gamma + 2s \ge 0$ ensures a spectral gap from \cite{MR2322149}.
 \end{itemize}

Our main physical motivation is derived from of particles interacting according to a spherical intermolecular repulsive potential of the form
$$
\phi(r)=r^{-(p-1)} , \quad p \in (2,+\infty).
$$
For these potentials, Maxwell in 1867 showed that 
the kernel $B$ can be computed.  It satisfies the conditions above with $\gamma = (p-5)/(p-1)$ 
and $s = 1/(p-1)$; 
see for instance \cite{MR1313028,MR1307620,MR1942465}.
In this situation 
$$
\gamma + 2s = \frac{p-3}{p-1} > 0, 
\quad
\text{or}
\quad
p >  3.
$$
Thus all of the conditions in  \eqref{kernelQ} and \eqref{kernelP} hold
with 
$-1 < \gamma < 1$ and 
$0 < s < 1/2$.  Some authors use the notation 
$\nu/2 = s \in (0,1)$ which is equivalent to our own.

Many research results on the non cut-off Boltzmann equation consider regularized kinetic factors, which means that 
$
\Phi( |v-v_*| ) = C_\Phi  \langle v-v_*\rangle^\gamma. 
$
In this situation our results apply easily to 
any  $s\in (0,1)$
and any 
$
\gamma   >  - 2s;
$
 we can also handle these exponents in the physical kinetic factor from \eqref{kernelP}.
We do elect not to record these details herein since it would add unnecessary technical complexity that does not directly relate to the main goal of illustrating our new methods.  
We can further prove our main result for the full physical range: $\gamma + 2s > -1$ and $p>2$; this will be the content of a forthcoming work \cite{sgNonCut2}.
However, in the context of our main theorem and \cite{sgNonCut2}, the essential mathematical difficulties associated with the angular singularities \eqref{kernelQ} are resolved in the present result. 

We will study the linearization of \eqref{BoltzFULL} around the Maxwellian 
equilibrium state 
\begin{equation}
F(t,x,v) = \mu(v)+\sqrt{\mu(v)} f(t,x,v),
\label{maxLIN}
\end{equation}
where without loss of generality
$$
\mu(v) = (2\pi)^{-3/2}e^{-|v|^2/2}.
$$
We will also suppose without restriction that the mass, momentum, and energy conservation laws for the perturbation $f(t,x,v)$ hold for all $t\ge0$ as
\begin{equation}
\int_{\mathbb{T}^3_x\times \mathbb{R}^3_v} ~ dx ~ dv ~ \begin{pmatrix}
      1   \\      v  \\ |v|^2
\end{pmatrix}
~ \sqrt{\mu(v)} ~ f(t,x,v) 
=
0.
\label{conservation}
\end{equation}
This condition should be satisfied initially, and then will continue to be satisfied for a suitably strong solution.  Our main interest is in global strong solutions to the Boltzmann equation \eqref{BoltzFULL} which are perturbations of the Maxwellian equilibrium states \eqref{maxLIN} for the long-range collision kernels \eqref{kernelP} and \eqref{kernelQ}.  

As will be seen, our solution to this problem rests heavily on our introduction of the 
following new weighted geometric fractional  Sobolev space:
$$
N^{s,\gamma}
\eqdef 
\left\{ f \in L^2(\mathbb{R}^3_v): 
 \nsm f\nsm_{N^{s,\gamma}} <\infty
\right\},
$$
where for $v \in \R^3$ we define $\ext{v} \eqdef (v, \frac{1}{2} |v|^2) \in \R^4$, and then specify the  norm 
by
\begin{equation} 
\nsm f\nsm_{N^{s,\gamma}}^2 
\eqdef 
\nsm f\nsm_{L^2_{\gamma+2s}}^2 + \int_{\R^3} dv ~ \int_{\R^3} dv' ~ 
(\ang{v}\ang{v'})^{\frac{\gamma+2s+1}{2}}
~
 \frac{(f(v') - f(v))^2}{|\ext{v} - \ext{v'}|^{3+2s}} 
\ind_{|\ext{v} - \ext{v'}| \leq 1}. \label{normdef} 
\end{equation}
This space includes the weighted $L^2$ space given by
$$
\nsm f\nsm_{L^2_{\gamma+2s}}^2 
\eqdef
\int_{\R^3} dv ~ 
\ang{v}^{\gamma+2s}
~
|f(v)|^2.
$$
Note that the weight is as usual: 
$
\ang{v}
= \sqrt{1+|v|^2}.
$
The inclusion of the quadratic difference $|v|^2 - |v'|^2$ in the fractional kernel will be of great importance; it is not a lower order term. 
The rest of our notation is defined below.

The sharp space $N^{s,\gamma}$ is equivalent to a weighted, nonisotropic Sobolev norm, a feature which was conjectured in \cite{MR2322149}.  
Precisely, if $\R^3$ is identified with a paraboloid in $\R^4$ by means of the mapping $v \mapsto (v, \frac{1}{2}|v|^2)$ and $\Delta_{P}$ is defined to be the Laplacian on the paraboloid induced by the Euclidean metric on $\R^4$ then
\[ 
\nsm f \nsm_{N^{s,\gamma}}^2 \approx \int_{\R^3} dv \ang{v}^{\gamma+2s} \left| (I- \Delta_{P})^{\frac{s}{2}} f (v) \right|^2. 
\]
We will however omit the proof of this characterization as it has no direct role in establishing our results.  
With this, we may now to state our main result as follows:

\begin{theorem}
\label{mainGLOBAL}  
(Main Theorem).
Fix $\NgE$, the total number of spatial derivatives.
Choose $f_0 = f_0(x,v) \in L^2(\mathbb{R}^3_v; H^N(\mathbb{T}^3_x))$ in \eqref{maxLIN} which satisfies \eqref{conservation} initially.  
There is an $\eta_0>0$ such that if $\| f_0 \|_{L^2(\mathbb{R}^3_v; H^N(\mathbb{T}^3_x))} \le \eta_0$, then there exists a unique global strong solution to the Boltzmann equation \eqref{BoltzFULL}, in the form 
\eqref{maxLIN}, which satisfies
$$
f(t,x,v) \in 
L^\infty_t( [0,\infty); L^2_v H^N_x (\mathbb{T}^3_x\times \mathbb{R}^3_v ))
\cap
L^2_t( [0,\infty); N^{s,\gamma}_vH^N_x(\mathbb{T}^3_x\times \mathbb{R}^3_v )).
$$
Moreover, we have exponential decay to equilibrium.  For some fixed $\lambda >0$,
$$
\| f \|_{L^2(\mathbb{R}^3_v; H^N(\mathbb{T}^3_x))}(t) \lesssim e^{-\lambda t} 
\| f_0 \|_{L^2(\mathbb{R}^3_v; H^N(\mathbb{T}^3_x))}.
$$
 We also have positivity, i.e. $F= \mu + \sqrt{\mu} f \ge 0$ if $F_0= \mu + \sqrt{\mu} f_0 \ge 0$.
\end{theorem}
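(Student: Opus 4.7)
The plan is to use the nonlinear energy method for close-to-equilibrium problems, with the essential new ingredient that all dissipative estimates are carried out in the geometric fractional norm $N^{s,\gamma}$. Writing $F = \mu + \sqrt{\mu} f$ as in \eqref{maxLIN} and inserting into \eqref{BoltzFULL} yields the perturbed equation
\[ \partial_t f + v \cdot \nabla_x f + L f = \Gamma(f,f), \]
where $L f = -\mu^{-1/2}[\mathcal{Q}(\mu,\sqrt{\mu} f) + \mathcal{Q}(\sqrt{\mu} f,\mu)]$ and $\Gamma(f,g) = \mu^{-1/2}\mathcal{Q}(\sqrt{\mu} f,\sqrt{\mu} g)$. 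For every multi-index $\alpha$ with $|\alpha|\le N$, I differentiate in $x$, pair with $\partial^\alpha f$ in $L^2_{x,v}$, and sum to obtain the energy identity
\[ \frac{1}{2}\frac{d}{dt}\|f\|_{L^2_v H^N_x}^2 + \sum_{|\alpha|\le N}\langle L\partial^\alpha f,\partial^\alpha f\rangle = \sum_{|\alpha|\le N}\langle \partial^\alpha \Gamma(f,f),\partial^\alpha f\rangle. \]

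The two analytical pillars are sharp estimates on $L$ and $\Gamma$ expressed via $N^{s,\gamma}$. For the linear piece I would establish coercivity
\[ \langle Lg,g\rangle_{L^2_v} \gtrsim \nsm \{\mathbf{I}-\mathbf{P}\} g\nsm_{N^{s,\gamma}}^2, \]
where $\mathbf{P}$ is the orthogonal projection onto the five-dimensional collision-invariant subspace spanned by $\sqrt{\mu},\, v_j\sqrt{\mu},\, |v|^2\sqrt{\mu}$; the spectral gap of \cite{MR2322149} (available because $\gamma+2s\ge 0$) together with control of the compact part of $L$ by the $L^2_{\gamma+2s}$ piece of the norm should yield this. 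For the nonlinear part I would prove a trilinear estimate of schematic form
\[ |\langle \Gamma(f,g),h\rangle_{L^2_v}| \lesssim \bigl( \nsm f\nsm_{L^2_{\gamma+2s}}\nsm g\nsm_{N^{s,\gamma}} + \nsm f\nsm_{N^{s,\gamma}}\nsm g\nsm_{L^2_{\gamma+2s}} \bigr)\nsm h\nsm_{N^{s,\gamma}}, \]
with velocity weights arranged so that two $N^{s,\gamma}$ factors on the right supply the dissipation needed to absorb $\partial^\alpha f$. Sobolev embedding in $x$ (using $\NgE$, so $H^N(\mathbb{T}^3) \hookrightarrow L^\infty$) transfers spatial derivatives onto a single factor in the standard way.

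Combined, these estimates give a differential inequality
\[ \frac{d}{dt}\mathcal{E}(t) + \delta\,\mathcal{D}(t) \le C\sqrt{\mathcal{E}(t)}\,\mathcal{D}(t), \]
for $\mathcal{E}(t) = \|f(t)\|_{L^2_v H^N_x}^2$ and $\mathcal{D}(t) = \nsm f(t)\nsm_{N^{s,\gamma}_v H^N_x}^2$. If $\eta_0$ is small enough, a standard continuity argument closes the a priori bound, and the conservation laws \eqref{conservation} allow one to upgrade coercivity of $\{\mathbf{I}-\mathbf{P}\}f$ to full control of $f$ via a macroscopic Poincar\'e-type estimate on the hydrodynamic modes, whence $\mathcal{E}(t)\lesssim \mathcal{D}(t)$ and exponential decay $\mathcal{E}(t)\lesssim e^{-\lambda t}\mathcal{E}(0)$ follow. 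Existence itself is obtained by a linear iteration scheme for which the same a priori estimate furnishes uniform bounds and a contraction in a weaker norm; uniqueness follows from the difference equation, and positivity $F\ge 0$ is inherited by passing to the limit from approximate (e.g.\ angular-cut-off) problems.

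The main obstacle, by a wide margin, is proving the sharp trilinear estimate for $\Gamma$ within the anisotropic norm $N^{s,\gamma}$. The Carleman-type representations to be discussed in the appendix, combined with the observation that the kernel $|\ext{v}-\ext{v'}|^{-3-2s}$ respects precisely the paraboloid geometry preserved by elastic collisions \eqref{energy}, should allow the singular angular factor $b$ to be recognized as a fractional Sobolev seminorm on the image of $v\mapsto(v,\tfrac12|v|^2)$ in $\R^4$; but carrying out the commutator-type rearrangements that convert the $F'_*F'-F_*F$ structure of $\mathcal{Q}$ into a clean tensor product of $N^{s,\gamma}$ factors, while correctly tracking the velocity weights generated by $\Phi(|v-v_*|)=C_\Phi|v-v_*|^\gamma$ up to the prescribed growth $\ang{v}^{\gamma+2s}$, is the substantial harmonic-analytic heart of the argument. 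Establishing the coercivity of $L$ in $N^{s,\gamma}$, rather than in a weaker Sobolev-type norm, is a closely related difficulty that the same geometric machinery should resolve.
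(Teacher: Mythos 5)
Your proposal follows essentially the same route as the paper: the same linearization and energy identity, the sharp trilinear estimate and coercivity of $L$ in the anisotropic space $N^{s,\gamma}$ tied to the paraboloid geometry via Carleman-type representations, macroscopic (hydrodynamic) estimates using \eqref{conservation} to recover the $\mathbf{P}f$ part, and a continuity/Gronwall argument yielding global existence, uniqueness, and exponential decay. The only minor deviations are cosmetic: the paper replaces your pointwise upgrade $\mathcal{E}(t)\lesssim\mathcal{D}(t)$ by an equivalent energy corrected with an interaction functional (so as to avoid temporal derivatives, the macroscopic bound carrying a $\frac{d\mathcal{I}}{dt}$ term), and it obtains positivity from the known non-cutoff result for smoother data combined with uniqueness and density rather than from angular cut-off approximations.
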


Grad proposed \cite{MR0156656} in 1963 the angular cut-off which requires that $b(\cos \theta)$ be bounded. 
Grad also pointed out that many cut-offs are possible.  
In particular,  the following less stringent $L^1(\mathbb{S}^2)$ cut-off
 has become fashionable\footnote{
Note that this $L^1(\mathbb{S}^2)$ cut-off was already implicitly used in 1954 by Morgenstern \cite{MR0063956}.} 
$$
\int_{\mathbb{S}^{2}}d\sigma~b(\ang{k, \sigma})
< \infty.
$$
These types of truncations have been widely accepted, and have now influenced several
decades of mathematical progress on the Boltzmann equation. 
For the intermolecular repulsive potentials previously discussed, the cut-off theory only applies physically in the limit when $p\to \infty$, which represents Hard-Sphere particles.

These cut-off assumptions were originally believed to not change the essential nature of solutions to the equation. 
It has been argued by physicists, see \cite{MR1942465}, that the important properties of the Boltzmann equation are not particularly sensitive to the dependence of the collision kernel upon the deviation angle, $\theta$.  
Our theorem above shows at the mesoscopic level of Boltzmann that this physical heuristic misses a strong dependence of the solutions on the angular singularity; specifically, there is a gain of velocity regularity and velocity moments globally in time.

Results regarding these types of smoothing effects can be seen in the pioneering work of  Desvillettes \cite{MR1324404} from 1995, which applied to  simplified models such as the Kac equation.  In a very recent preprint from 2009, perhaps the first local existence theorem for large data 
and moderate angular singularities $0<s<1/2$
has been shown by Alexandre-Morimoto-Ukai-Xu-Yang \cite{arXiv:0909.1229v1} for initial data which is somewhat smoother than ours, they also show
the $C^\infty_{t,x,v}$ regularizing effect.  
By contrast, under
the angular cut-off assumption the solution is known to have the same
regularity in a Sobolev space as the initial data.  These results go back to Boudin-Desvillettes \cite{MR1798557} in 2000 for solutions near Vacuum, and this same effect has been recently  shown in the near Maxwellian regime \cite{MR2435186,MR2476678}.

As a result of the fact that the angular singularity \eqref{kernelQ} is not
integrable on a sphere, it has been conjectured in numerous works that the nonlinear
collision operator should  behave like a fractional (flat) Laplacian in the velocity variable
$v$:
$$
\mathcal{Q}(F, F) =   -C_F {(-\Delta_v )^{s} F} +\text{lower order terms}.
$$
Our precise work at the linearized level shows that this conjecture is not the whole story.   Certainly we see that there is a smoothing effect globally in time.  However for our results the most useful intuitive point of view is to think of the collision operator as a fractional Laplacian on a manifold, and this manifold depends in an essential way on the collisional geometry. 

By comparison, the Landau equation, derived in 1936, is probably the closest analog we have to the Boltzmann collision operator for long-range interactions; however the Landau operator involves regular partial derivatives rather than
 fractional derivatives and for that reason may be somewhat more understandable at first.
Landau's equation is obtained as the limiting system when $p\to 2$ in the inverse power law potential, the collision operator can be shown to satisfy \cite{MR1942465}:
$$
Q_{\mathcal{L}}(F, F) = \sum_{i,j=1}^3 \bar{a}_{ij} \partial_{v_i}\partial_{v_j } F + 8\pi F^2,
\quad
\bar{a}_{ij}
=
\left( \frac{1}{|v|}\left[\delta_{ij} - \frac{v_i v_j}{|v|^2}  \right]  \right) * F.
$$
Notice there is a metric of sorts in this case--in the $\bar{a}_{ij}$--which depends in an essential way on your unknown solution $F$.  Even in the simplest case when  your unknown is the steady state, $F = \mu(v)$, this $\bar{a}_{ij}$ weights more heavily angular derivatives \cite{MR1946444}.  In the general case $\bar{a}_{ij}$ is known to be degenerate and not comparable to $\delta_{ij}$ at infinity, see e.g.  \cite{MR2506070}. 

For  this paper, the basic new understanding which enabled our progress was to identify that the fractional differentiation effects induced by the linearized Boltzmann collision operator are taking place on a paraboloid in $\mathbb{R}^4$.  While we do not directly identify dependence of geometry on the function $F$ itself in our formulation, it seems only reasonable to suspect that such dependence may be relevant to future work.   
Before reviewing  the details of our proof, we will mention past works.

\subsection{Review of the non cut-off theory} 
As above, it makes good sense to briefly review a few results for the Landau equation, which corresponds to the grazing collision limit $s\to 1$ of the Boltzmann equation for long-range interactions (see \cite{MR2037247} and the references therein).  For the spatially  homogeneous case with hard potentials (roughly, replace $1/|v|$ above with $|v|^{\gamma + 2}$ for $\gamma \ge 0$), global existence of unique weak solutions and the instantaneous smoothing effect was shown for the first time by Desvillettes and Villani  \cite{MR1737547} for a large class of initial data in the year 2000.
Then Guo \cite{MR1946444} in 2002 established the existence of classical solutions for the spatially
dependent case with the physical Coulombian interactions ($p=2$) for smooth near Maxwellian initial data in a periodic box. 
Guo's solutions were recently shown to experience instantaneous regularization  in \cite{MR2506070}.  For further  results in these directions we refer to the references in \cite{MR2506070}.

Due to length constraints, it is unfortunately not possible  to give an exhaustive review of results in the non cut-off theory.  We will however try to mention a sample of results. In the case of Maxwell molecules, e.g. $p=5$,  it is  remarkable that the spectrum and eigenfunctions of the linearized collision kernel can be computed explicitly as was performed in a classical paper \cite{WCUh52} from 1952.  This was later simplified by Bobylev \cite{MR1128328}; in this work the Fourier transform of the Boltzmann collision operator was shown to have an elegant form, which is now called  Bobylev's identity and has found  widespread utility.
Pao \cite{MR0636407} in 1974 used the early techniques of pseudodifferential operators and Bessel functions to study the spectral properties of the linearized operator for general inverse-power intermolecular potentials with $p>3$.  
In 
1981-82, 
Arkeryd in \cite{MR630119,MR679196} 
proved the existence of weak
solutions to the spatially homogeneous Boltzmann equation when
$0<s<\frac 12$. Then  Ukai  \cite{MR839310} in 1984  obtained a local Cauchy-Kovalevskaya type theorem in a
function space which was analytic in $x$ and Gevrey in $v$; this work applied to the moderate angular singularities $0 \le s<\frac 12$, the convergence of Grad's cutoff approximation and the positivity of solutions was also established. 
 In 1998  Villani \cite{MR1650006} introduced the new spatially homogeneous weak H-Solution formulation which can handle all physically meaningful interactions with $p>2$ and more generally; we refer the reader to this paper for a fairly exhaustive list of references up to 1998.

In 1998 Lions proved a functional inequality \cite{MR1649477} which bounds below the ``entropy dissipation'' by an isotropic Sobolev norm $H^\alpha_v$ up to lower order terms, for a certain range of $\alpha$.
Then in the work of Alexandre-Desvillettes-Villani-Wennberg
\cite{MR1765272} from 2000, this entropy dissipation smoothing estimate was obtained in the isotropic space $H^s_v$ with the optimal exponent $s$.  This work further introduced elegant formulas, such as the cancellation lemma and  isotropic sub-elliptic coercivity estimates using the Fourier transform.
This was in several ways the starting point of the modern theory of grazing collisions.
Subsequent results of Alexandre-Villani \cite{MR1696193,MR1857879} developed a renormalized DiPerna-Lions theory of weak solutions with defect measure, and established the appearance of strong compactness.  
Desvillettes-Wennberg \cite{MR2038147} further demonstrated that solutions to the spatially homogeneous Boltzmann equation for regularized hard potentials enter the Schwartz space instantaneously.
And recently in
2009  Desvillettes-Mouhot \cite{MR2525118} proved the uniqueness of spatially homogeneous strong solutions  for the full range of angular singularities $s\in(0,1)$, and they have shown existence for moderate angular singularities $s\in(0,1/2)$.

Broadly speaking, the approaches outlined above use the Fourier transform to interpret the fractional differentiation effects isotropically.  Other interesting methods have been introduced to further study the fractional differentiation effects isotropically, using more involved methods from pseudodifferential operators and harmonic analysis.
In particular, some uncertainty principles in the framework of Fefferman
\cite{MR707957} 
were introduced in 2008 by  Alexandre-Morimoto-Ukai-Xu-Yang \cite{MR2462585} to study
smoothing effects of the 
Boltzmann equation  with non-cutoff cross sections. 
Important methods introduced in that paper \cite{MR2462585}, as well as Alexandre  \cite{MR2284553}, Morimoto-Ukai-Xu-Yang
\cite{MR2476686},
and the references therein, establish the hypoellipticity of the
Boltzmann operator.  They develop methods for estimating the commutators  between the Boltzmann collision
operator and  some weighted pseudodifferential operators.  And they sharpen some of the isotropic coercivity and upper bound estimates for the Boltzmann collision operator.  
 Recently these authors have  new preprint \cite{arXiv:0909.1229v1} with a large data local existence theorem as previously mentioned, note also the earlier local existence theorem  between two moving Maxwellians \cite{MR1851391} of Alexandre.

We also mention the  
linear isotropic coercivity estimates from \cite{MR2254617,MR2322149}.  In particular Mouhot-Strain \cite{MR2322149} identified the sharp weight, $\gamma+2s$, in our norm \eqref{normdef}.   \\

During the course of the proof of our main Theorem \ref{mainGLOBAL}, we develop a new point of view and a new set of tools for the long-range interactions, which we believe have implications for a variety of future results both in the perturbative regime and maybe even beyond it.  We do not use any of the major non cut-off techniques  described above, most of which are designed around the Fourier transform and isotropic estimates.  
Moreover, we do not study the Fourier transform of the collision operator at all.

From the standpoint of harmonic analysis, the estimates we make for the bilinear operator 
\eqref{gamma0} arising from our ansatz \eqref{maxLIN} fall well outside the scope of standard theorems.  The operator and its associated trilinear form may be expressed in terms of Fourier transforms as a trilinear paraproduct; such objects have been the subject of recent work of Muscalu, Pipher, Tao, and Thiele \cite{MR2320408} and are known to be very difficult to study in general.  Known results for such objects fail to apply in our case because of the loss of derivatives (meaning that at least two of the three functions $g,h,$ and $f$ must belong to some Sobolev space with a positive degree of smoothness).  Moreover, routine modifications of known results (for example, composing with fractional integration to compensate for the loss of derivatives) also fail because of the presence of a fundamentally non-Euclidean geometry, namely, the geometry on the paraboloid.  This nontrivial geometry essentially renders any technique based on the Fourier transform difficult to use herein. Instead, we base our approach on the generalized Littlewood-Paley theory developed by Stein \cite{MR0252961}.  Rather than  directly using semigroup theory, however, we opt for a more geometric approach, as was taken, for example, by Klainerman and Rodnianski \cite{MR2221254}.  Since the underlying geometry we identify is explicit, we are able to make substantial simplifications over both of these earlier works by restricting attention to the particular case of interest.

\subsection{Possibilities for the future, and extensions}\label{possible}
We believe that our general methods and point of view can be useful in making further progress on multiple fronts in the non cut-off theory.  Herein we list  some of those.

We will soon address the generalization to the very soft potentials and $\gamma + 2s\le 0$ in a subsequent paper \cite{sgNonCut2}. Furthermore, the generalization to the whole space case $\mathbb{R}^3_x$ can be handled by combining our estimates with the existing cut-off technology in the whole space.
It would also be interesting to prove the instantaneous $C^\infty$ regularizing effect for our solutions in the spirit of \cite{MR2506070}.

We are hopeful that the estimates we prove can help to resolve the existence question for the   Vlasov-Maxwell-Boltzmann system without angular cut-off; notice at the moment the theory here is limited to the hard-sphere interactions \cite{MR2000470}.  It would be interesting to study the smoothing effect, which could potentially improve dramatically the strong assumptions in the current time decay theory \cite{MR2209761}.

For spatially homogeneous solutions, our results provide additional information for the high singularities $s\ge 1/2$, in which otherwise there is currently no existence theory for strong solutions \cite{MR2476686,MR2525118}.
The methods and point of view in this paper may help to treat the high singularities  with large spatially homogeneous data.

Lastly, it would be quite important to work with the estimates herein and in \cite{sgNonCut2} to  justify rigorously the validity of Landau approximation near Maxwellian.  \\

In the next Section \ref{nrstat}, we linearize the Boltzmann equation 
\eqref{BoltzFULL} around the perturbation \eqref{maxLIN} and then explain the sharp space associated with the linearized collision operator.  We further define all the relevant notation and formulate and discuss the main velocity fractional derivative estimates.  Then we describe several key new ideas in our proof, and outline the rest of the article. 

\section{Notation, reformulation, the main estimates, and our strategy}
\label{nrstat}

Throughout this section we will define the relevant notation for the problem.  We also reformulate the problem in terms of the equation \eqref{Boltz} for the perturbation \eqref{maxLIN}.  Last and perhaps most importantly, we will state and explain our main estimates towards the end of this section.

Throughout this paper, the notation $A \lesssim B$ will mean that an a positive constant $C$ exists such that $A \leq C B$ holds uniformly over the range of parameters which are present in the inequality (and that the precise magnitude of the constant is irrelevant).  In particular, whenever either $A$ or $B$ involves a function space norm, it will be implicit that the constant is uniform over all elements of the relevant space unless explicitly stated otherwise.  The notation $B \gtrsim A$ is equivalent to $A \lesssim B$, and $A \approx B$ means that both $A \lesssim B$ and $B \lesssim A$.

\subsection{Reformulation}
We linearize the Boltzmann equation \eqref{BoltzFULL} around the perturbation \eqref{maxLIN}.  This grants an equation for the perturbation $f(t,x,v)$ as
\begin{gather}
 \partial_t f + v\cdot \nabla_x f + L (f)
=
\Gamma (f,f),
\quad
f(0, x, v) = f_0(x,v),
\label{Boltz}
\end{gather}
where the {\it linearized Boltzmann operator} $L$ is given by
\begin{align*}
 L(g)
 \eqdef & 
- \mu^{-1/2}\mathcal{Q}(\mu ,\sqrt{\mu} g)- \mu^{-1/2}\mathcal{Q}(\sqrt{\mu} g,\mu) \\
  = &
  \int_{\mathbb{R}^3}dv_{*}
  \int_{\mathbb{S}^{2}} d\sigma~
  B(|v-v_*|,\cos \theta) \, 
   \left[g_{*} M + g M_{*}- g^{\prime}_{*} M^{\prime} - g^{\prime} M^{\prime}_{*}  \right] M_{*},
\end{align*}
and the bilinear operator $\Gamma$ is given by
\begin{gather}
\Gamma (g,h)
\eqdef
 \mu^{-1/2}\mathcal{Q}(\sqrt{\mu} g,\sqrt{\mu} h)
 =
 \int_{\R^3} dv_* \int_{\R^3} d \sigma B M_* (g_*' h' - g_* h). 
\label{gamma0}
\end{gather}
In both definitions, we take
$$
M(v) \eqdef \sqrt{\mu(v)} = (2 \pi)^{-3/4} e^{- |v|^2/4}.
$$
When convenient, we will without loss of generality abuse notation and neglect the constant $(2 \pi)^{-3/4}$ in the definition of $M$.
Finally, we note that 
\begin{equation}
L(g) \eqdef - \Gamma(M,g) - \Gamma(g, M).
\label{LinGam} 
\end{equation}
This reformulation shows that it is fundamentally important to obtain favorable estimates for the bilinear operator $\Gamma$.

We expand the main term of the linearized Boltzmann operator as
  \begin{equation*}
\Gamma(M,g)
  =
  \int_{\mathbb{R}^3}dv_{*}
  \int_{\mathbb{S}^{2}} d\sigma~
  B(|v-v_*|,\cos \theta) \, 
   \left[M^{\prime}_{*} g^{\prime}  -  M_{*} g \right] M_{*}.
  \end{equation*}
 We will now split this in parts whilst preserving the cancellations as follows:
   \begin{gather*}
\Gamma(M,g)  =
 \int_{\mathbb{R}^3}dv_{*}
  \int_{\mathbb{S}^{2}} d\sigma~
  B \, 
   \left(g^{\prime}- g \right) M^{\prime}_{*}M_{*}
   -
 \tilde{\nu}(v)  ~ g(v),
  \end{gather*}
where
  $$
 \tilde{\nu}(v) 
   =
  \int_{\mathbb{R}^3}dv_{*}
  \int_{\mathbb{S}^{2}} d\sigma~
  B(|v-v_*|,\cos \theta) \, 
 ( M_{*} - M^{\prime}_{*} ) M_{*}.
 $$ 
 The first piece above satisfies a favorable identity.  This
 will follow from the pre-post collisional change of variables as
   \begin{gather*}
 - \int_{\mathbb{R}^3}dv
  \int_{\mathbb{R}^3}dv_{*}
  \int_{\mathbb{S}^{2}} d\sigma~
  B(|v-v_*|,\cos \theta) \, 
 (g^{\prime}-g) h M^{\prime}_{*}  M_{*}
 \\
     =
     -
     \frac{1}{2}
  \int_{\mathbb{R}^3}dv
  \int_{\mathbb{R}^3}dv_{*}
  \int_{\mathbb{S}^{2}} d\sigma~
  B \,  
 (g^{\prime}-g) h M^{\prime}_{*}  M_{*}
 \\
 -
      \frac{1}{2}
  \int_{\mathbb{R}^3}dv
  \int_{\mathbb{R}^3}dv_{*}
  \int_{\mathbb{S}^{2}} d\sigma~
  B \, 
 (g-g^{\prime}) h^{\prime} M_{*}  M^{\prime}_{*}
  \\
     =
     \frac{1}{2}
  \int_{\mathbb{R}^3}dv
  \int_{\mathbb{R}^3}dv_{*}
  \int_{\mathbb{S}^{2}} d\sigma~
  B \, 
 (g^{\prime}-g) (h^{\prime}-h) M^{\prime}_{*}  M_{*}.
  \end{gather*}
This shows the crucial point that herein is contained a Hilbert space structure.
For the weight,   it has been shown that we can make the splitting
$$
 \tilde{\nu}(v)  =  \nu(v) + \nu_K(v),
$$
where under only   \eqref{kernelQ} and \eqref{kernelP} it can be seen that
\begin{equation}
 \nu (v)\approx
\ang{v}^{\gamma+2s},
\quad
\text{and}
\quad
\left|  \nu_K(v) \right|
\lesssim
\ang{v}^{\gamma}.
\notag
\end{equation}
These estimates
were established by
 Pao \cite[p.568 eq. (65), (66)]{MR0636407}
when studying the eigenvalue problem.
Pao reduced this question to the known asymptotic behavior of confluent hypergeometric functions, making  use of the general addition theorems. 

We further decompose  $L=N  + K $.  Here $N$ is the ``norm part'' and $K$ will be seen as the ``compact part.''
The norm part is then written as
\begin{gather}
   \label{normpiece}
 Ng
   \eqdef
   - \Gamma(M,f) - \nu_K(v) f
   =
  -\int_{\mathbb{R}^3}dv_{*}
  \int_{\mathbb{S}^{2}} d\sigma~
  B 
 (g^{\prime}-g) M^{\prime}_{*}  M_{*}
   + \nu(v) g(v).
\end{gather}
We will use $\langle \cdot, \cdot \rangle$ to denote the standard $L^2(\R^3_v)$ inner product.
Then, with the previous calculations, this norm piece satisfies the following identity: 
\begin{gather*}
  \ang{Ng,g} =  \frac{1}{2} \int_{\R^3} dv \int_{\R^3} dv_* \int_{\sph} d \sigma B (g'-g)^2 M_*' M_* 
  + 
  \int_{\R^3} dv ~ \nu(v) ~ |g(v)|^2. 
\end{gather*}
As a result, 
in the following we will use the anisotropic fractional semi-norm
 \begin{equation}
| g |_B^2 \eqdef 
\frac{1}{2} \int_{\R^3} dv~ \int_{\R^3} dv_*~ \int_{\sph} d \sigma~ B~ (g'-g)^2 M_*' M_*. \label{normexpr}
\end{equation}
For the second part of $\ang{Ng,g}$ we recall the norm 
$
\nsm f\nsm_{L^2_{\gamma+2s}}
$
defined below equation \eqref{normdef}. 
These two quantities will define our designer norm, it is sharp for the linearized operator.
We also record here the definition of the ``compact piece'' $K$:
 \begin{equation}
  K g \eqdef  \nu_K(v) g 
- \Gamma(g, M)
=
  \nu_K(v) g 
-
 \int_{\R^3} dv_* \int_{\R^3} d \sigma B M_* (g_*' M' - g_* M).
 \label{compactpiece}
\end{equation}
This is our main splitting of the linearized operator.

To simplify many statements below, we  will  use the following $(\eta, \delta)$-norm:
 $$
  \nsm f\nsm_{\eta,\delta}^2 \eqdef \int_{\R^3} ~ dv ~ |f(v)|^2 ( \eta \ang{v}^{\gamma+2s} + \delta^{-1} ),
  \quad \eta, \delta >0.
  $$
This norm perhaps requires some explanation.  It will serve to unify several different desirable inequalities later.  
The first useful feature of these norms is 
to observe that
\begin{equation}
\inf_{\eta,\delta > 0} |g|_{\delta,\eta} |h|_{\eta,\delta} = |g|_{L^2} |h|_{L^2_{\gamma+2s}} + |g|_{L^2_{\gamma+2s}} |h|_{L^2}.
 \label{expand}
\end{equation}
The second desirable inequality to be made is that 
\begin{equation*}
 \nsm f\nsm_{\eta,\eta}^2 \leq 2 \eta \nsm f\nsm_{L^2_{\gamma+2s}}^2 + \eta^{-1} \int_{|v| \leq \eta^{-\frac{2}{\gamma+2s}}} dv ~ |f(v)|^2, 
 \quad 
 \forall \eta >0.  
\end{equation*}
This holds because $\gamma+2s > 0$ so the weight $1$ is ultimately bounded by any small constant times $\ang{v}^{\gamma+2s}$ provided $|v|$ is large enough.  The right-hand side of this inequality is precisely the sort of norm needed for estimates of the ``compact'' part.

All of the above $L^2(\mathbb{R}^3_v)$-based norms  are exclusively in the velocity variables.  We will define analogous $L^2(\mathbb{T}^3_x \times \mathbb{R}^3_v)$ norms in both space and velocity variables by replacing $| \cdot |$ with $\| \cdot \|$ and keeping the same norm subscript.  We also use $(\cdot, \cdot )$ to be the usual $L^2(\mathbb{T}^3_x \times \mathbb{R}^3_v)$ inner product.  In particular
$$
\| h\|_{N^{s,\gamma}}^2
\eqdef
\left\|~ \nsm h\nsm_{N^{s,\gamma}} ~ \right\|_{L^2(\mathbb{T}^3_x)}^2,
\quad
\| h\|_{\eta, \delta}^2
\eqdef
\left\|~ \nsm h\nsm_{\eta, \delta} ~ \right\|_{L^2(\mathbb{T}^3_x)}^2.
$$
Now, for a multi-index $\alpha = (\alpha^1, \alpha^2, \alpha^3)$, we will use the spatial derivatives
$$
\partial^\alpha 
=
\partial^{\alpha^1}_{x_1} 
\partial^{\alpha^2}_{x_2} 
\partial^{\alpha^3}_{x_3}. 
$$
We will also take
$ L^2\left(\mathbb{R}^3_v ; H^N\left(\mathbb{T}^3_x \right)\right)$ with
 $\NgE$ spatial derivatives to be  
 \begin{gather*}
\|h\|^2_{L^2(\mathbb{R}^3_v; H^N(\mathbb{T}^3_x))}\eqdef \sum_{|\alpha |\le N}
\|\partial^\alpha h\|^2_{L^2(\mathbb{R}^3_v\times\mathbb{T}^3_x)}.
\end{gather*}
 For brevity, in each of these norms we sometimes use the notation
  $L^2_v$,  $L^2_v L^2_x$, $L^2_v H^N_x$, etc,
  to denote these spaces without confusion.
For example, we will sometimes write 
$
\|h\|^2_{L^2(\mathbb{R}^3_v; H^N(\mathbb{T}^3_x))}
=
\|h\|^2_{L^2_vH^N_x}
$
and use other norms such as
$
\|h\|^2_{L^2_x}
=
\|h\|^2_{L^2(\mathbb{T}^3_x)}.
$

\subsection{Main Estimates}\label{mainESTsec}
We will prove all of our estimates for functions in the Schwartz space, $\mathcal{S}(\mathbb{R}^3)$, which is the well-known space of real valued $C^{\infty}(\mathbb{R}^3)$ functions all of whose derivatives decay at infinity faster than the reciprocal of any polynomial.   
Note that the Schwartz functions are dense in the nonisotropic space $N^{s,\gamma}$, and the proof of this fact is easily reduced to the analogous one for Euclidean Sobolev spaces by means of the partition of unity as constructed, for example, in section \ref{redistsec}.
Moreover, in all of our estimates, none of the constants that come up will depend on the 
regularity of the functions that we are estimating.  Thus using routine density arguments, our estimates will apply to any function in $N^{s,\gamma}$ or whatever the appropriate function space happens to be for a particular estimate.  
Our first non-linear estimate is the following:

\begin{lemma}
\label{NonLinEst}
For the non-linear term \eqref{gamma0}, we have the uniform estimate
\begin{equation}
\begin{split}
 |\ang{\Gamma(g,h),f}| \lesssim & \nsm g\nsm_{L^2} \nsm h\nsm_{N^{s,\gamma}} \nsm f\nsm_{N^{s,\gamma}} 
  + \nsm g\nsm_{L^2_{\gamma+2s}} \left[ \nsm h\nsm_{L^2} \nsm f\nsm_{N^{s,\gamma}} + \nsm h\nsm_{N^{s,\gamma}} \nsm f\nsm_{L^2} \right].
\end{split} \label{nlineq}
\end{equation}
For any $|\alpha |\le N$ and $\NgE$, with the above estimate we have
\begin{gather*}
 \left| \left(\partial^\alpha \Gamma(g,h),\partial^\alpha f\right) \right| 
 \lesssim 
  \| g\|_{H^N_{x} L^2_v} \| h\|_{H^N_{x}N^{s,\gamma}} \| f\|_{H^N_{x}N^{s,\gamma}} 
  \\
  + \| g\|_{H^N_{x} L^2_{\gamma+2s}} 
  \left[ \| h\|_{H^N_{x} L^2_v} \| f\|_{H^N_{x}N^{s,\gamma}} + \| h\|_{H^N_{x}N^{s,\gamma}} \| f\|_{H^N_{x} L^2_v} \right].
\end{gather*}
\end{lemma}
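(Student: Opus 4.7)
The plan is to bound the trilinear form $\ang{\Gamma(g,h), f}$ by splitting it into pieces in which the fractional differences fall on $h$ and $f$, so that these are controlled by $\nsm h\nsm_{N^{s,\gamma}}$ and $\nsm f\nsm_{N^{s,\gamma}}$ through the seminorm $|\cdot|_B$ of \eqref{normexpr}, while $g$ always appears in weighted $L^2$-type norms. This asymmetric treatment is consistent with the structure of the right side of \eqref{nlineq}, where $g$ never carries an $N^{s,\gamma}$ norm; the reason is structural, since in $\Gamma(g,h)$ the factor $g$ is always accompanied by the Gaussian $M_*$ (as $g_* M_*$ or $g_*' M_*$), which supplies all the decay needed to bound $g$ in plain $L^2_{\gamma+2s}$.

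First I would split $g_*' h' - g_* h = g_*(h'-h) + (g_*' - g_*) h'$, so that $\ang{\Gamma(g,h),f} = J_1 + J_2$, where $J_1$ carries the difference $h'-h$ and $J_2$ carries the difference $g_*' - g_*$. For $J_1$, decompose $f = \tfrac12(f+f') - \tfrac12(f'-f)$; the antisymmetric piece involving $(h'-h)(f'-f)$ is bounded by Cauchy--Schwarz against $|h|_B \, |f|_B$, after inserting the missing Gaussian factor $\sqrt{M_*'/M_*}$ and absorbing the balanced weight $\sqrt{M_* M_*'}$ of \eqref{normexpr} into $g_*$, producing the first term on the right of \eqref{nlineq}. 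The symmetric piece involving $(h'-h)(f+f')$ is the cancellation-lemma type contribution familiar from \cite{MR1765272}: after a pre--post collisional change of variables it collapses to a weighted $L^2$ integral of $ghf$ against a locally integrable angular kernel (the singularity $b(\cos\theta)\sim\theta^{-1-2s}$ of \eqref{kernelQ} being attenuated by the $\theta^{2s}$ factor coming from the Taylor expansion of $M_*'-M_*$), which Cauchy--Schwarz then controls by weighted $L^2$ products fitting inside the second bracket of \eqref{nlineq}.

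For $J_2$, I would apply the pre--post change of variables $(v,v_*,\sigma) \mapsto (v',v_*',k)$ (Jacobian $1$) to recast $\int B M_* (g_*'-g_*) h' f$ as a trilinear form involving a Gaussian difference $M_*' - M_*$ acting on $g_* h f'$; a Taylor expansion of the smooth factor $M$ then reduces this to integrals amenable to the same Cauchy--Schwarz splittings as in Step 2, with the paraboloid distance $|\ext{v}-\ext{v'}|$ of \eqref{normdef} emerging through the Carleman-type representation of the appendix. This yields the remaining $\nsm g\nsm_{L^2_{\gamma+2s}} \nsm h\nsm_{N^{s,\gamma}} \nsm f\nsm_{L^2}$ term. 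The $H^N_x$ inequality then follows from the Leibniz rule
$$\partial^\alpha \Gamma(g,h) = \sum_{\beta \le \alpha} \binom{\alpha}{\beta}\,\Gamma(\partial^\beta g, \partial^{\alpha-\beta}h),$$
by applying the $v$-only estimate \eqref{nlineq} pointwise in $x$ to each summand and then using Cauchy--Schwarz in $x$ together with the Sobolev embedding $H^{N-1}(\mathbb{T}^3_x) \hookrightarrow L^\infty(\mathbb{T}^3_x)$, valid since $\NgE$, to place the factor of lowest $x$-differentiation in $L^\infty_x$ and the remaining two in $L^2_x$.

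The main obstacle I expect is the Step 2/Step 3 splitting: when trading the single Gaussian $M_*$ appearing in the integrand for the balanced weight $\sqrt{M_* M_*'}$ of the $B$-seminorm \eqref{normexpr}, one must compensate by a factor of $\sqrt{M_*'/M_*}$ which is not uniformly bounded in $(v,v_*,\sigma)$; controlling it requires exploiting the $\ind_{\ang{k,\sigma}\ge 0}$ support (so that $\theta \le \pi/2$) and the singular angular weight $\theta^{-1-2s}$ of \eqref{kernelQ} with precision, so that the compensating Gaussian factor matches exactly the angular singularity and the paraboloid geometry of $N^{s,\gamma}$ emerges with the correct exponent $3+2s$. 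This matching is where the identification of the paraboloid in $\R^4$ as the natural geometric setting, the central conceptual contribution announced in the introduction, will do its real work in the proof.
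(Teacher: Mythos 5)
Your overall plan—split the integrand so that differences fall on $h$ and $f$, control them by the seminorm $|\cdot|_B$, and keep $g$ in weighted $L^2$—does not survive its central step, and it is not the paper's route. The antisymmetric piece of your $J_1$ has integrand $B\,M_*\,g_*(h'-h)(f'-f)$, and Cauchy--Schwarz cannot produce $\nsm g\nsm_{L^2}\nsm h\nsm_{B}\nsm f\nsm_{B}$ from it: to make one factor equal $\nsm f\nsm_B$ (whose weight is $M_*M_*'$, see \eqref{normexpr}) the other factor must carry $\int B\,(M_*^2/M_*')\,g_*^2(h'-h)^2$, which still couples $g$ to the squared difference of $h$ through the collision geometry and carries the ratio $M_*/M_*'=M'/M$, which by \eqref{anglest} grows like $e^{|\ang{v_*,\,v-v'}|/2}$ and is unbounded off the grazing regime. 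Estimating that mixed quantity is a problem of the same type and difficulty as \eqref{nlineq} itself, and the "obstacle" you flag at the end is exactly this unresolved step. Moreover, even granting such a bound, passing from $|\cdot|_B$ to the claimed right-hand side requires $\nsm h\nsm_B\lesssim\nsm h\nsm_{N^{s,\gamma}}$, i.e.\ the upper-bound half of $\ang{Nf,f}\approx\nsm f\nsm_{N^{s,\gamma}}^2$; in the paper this is \eqref{normupper} and is \emph{deduced from} \eqref{nlineq}, so your argument is circular unless you prove that comparison independently (the pointwise Carleman-kernel analysis of Section \ref{sec:mainCOER} only gives the opposite, coercive direction).

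Two further gaps: first, your $J_2$ reduction is computed incorrectly—applying the pre--post change of variables to $\int B M_*(g_*'-g_*)h'f$ gives $\int B\,g_*\bigl(M_*'\,h\,f'-M_*\,h'\,f\bigr)$, a crossed difference in $h$, $f$ and both Gaussians, not a pure Gaussian difference; Taylor expanding $M$ does not dispose of it, and you are back to needing differences of $h$ or $f$, which is precisely why the paper derives the dual formulation \eqref{3dualZ} so that the derivative can be placed on a function of one's choosing. Second, first-order cancellation gains one power of $\theta$ (not $\theta^{2s}$), which beats the singularity $\theta^{-1-2s}$ of \eqref{kernelQ} only when $s<\tfrac12$; for $s\in[\tfrac12,1)$ one needs the second-order expansion with the linear term killed by the angular symmetry of the kernel about $(v-v_*)/|v-v_*|$, which is the content of \eqref{cancelf2} and \eqref{cancelh2} and a substantial part of the actual proof, absent from your outline. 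The paper avoids any single global Cauchy--Schwarz of your kind: it dyadically decomposes the singularity in $|v-v'|$, expands $h$ and $f$ in the anisotropic Littlewood--Paley projections adapted to the paraboloid, uses ``trivial'' size/support estimates far from the singularity and the cancellation estimates (derivatives on the lower-frequency factor, via the dual formulation) near it, and sums geometric series against \eqref{lpbynorm}. Only your final step—Leibniz rule, Cauchy--Schwarz in $x$, and the Sobolev embedding on $\mathbb{T}^3_x$—matches the paper and is fine.
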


This second estimate in Lemma \ref{NonLinEst} follows easily by integrating  \eqref{nlineq} over $\mathbb{T}^3_x$, using Cauchy-Schwartz and the Sobolev embedding $L^\infty(\mathbb{T}^3_x) \supset H^2(\mathbb{T}^3_x)$.  
 The next important inequality that we establish is for the linear operator:

\begin{lemma}
\label{sharpLINEAR}
Consider the linearized Boltzmann operator $L = N + K$ where
$N$ is defined in \eqref{normpiece} and $K$ is defined in \eqref{compactpiece}.
We have the uniform inequalities
\begin{align}
\left| \ang{N g, g} \right| & \lesssim |g|_{N^{s,\gamma}}^2, \label{normupper} \\
\left| \ang{K g, g} \right| & \le \eta |g|_{L^2_{\gamma+2s}}^2 + C_\eta 
|g|_{L^2}^2, \label{compactupper}
\end{align}
where $\eta>0$ is any small number and $C_\eta>0$. 
\end{lemma}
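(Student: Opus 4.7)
The plan is to treat the two inequalities separately, exploiting in each case the explicit decompositions \eqref{normpiece}, \eqref{compactpiece} together with the identity $\ang{Ng,g} = |g|_B^2 + \int \nu(v) |g(v)|^2 dv$ established in the discussion just before the lemma.

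For \eqref{normupper}, the weighted $L^2$ piece is immediate from the asymptotics $\nu(v) \approx \ang{v}^{\gamma+2s}$ due to Pao, yielding $\int \nu |g|^2 \lesssim |g|_{L^2_{\gamma+2s}}^2 \le |g|_{N^{s,\gamma}}^2$. The real work lies in the anisotropic Boltzmann semi-norm $|g|_B^2$ from \eqref{normexpr}. My strategy is to pass from the $\sigma$-parametrization of collisions to a Carleman-type representation with $v'$ as the integration variable (see the appendix referenced in the paper), thereby rewriting
\[
|g|_B^2 \approx \int_{\R^3} \int_{\R^3} (g(v) - g(v'))^2 k(v,v') dv dv',
\]
and then to pointwise dominate $k(v,v')$ by the kernel defining $|g|_{N^{s,\gamma}}^2$. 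Here the paraboloid appears naturally from \eqref{energy}, which is precisely $\ext{v} + \ext{v_*} = \ext{v'} + \ext{v'_*}$. The angular singularity $b \sim \theta^{-1-2s}$ from \eqref{kernelQ} is converted by the Carleman Jacobian into the factor $|\ext{v}-\ext{v'}|^{-3-2s}$; the indicator $\ind_{|\ext{v}-\ext{v'}|\leq 1}$ is produced by the Gaussian factors $M_* M_*'$, which confine $v_*$ and $v'_*$ to a bounded region; and the weight $(\ang{v}\ang{v'})^{(\gamma+2s+1)/2}$ is generated by the kinetic factor $|v-v_*|^\gamma$ after partial integration against the Maxwellians, using $|v-v_*| \approx |v'-v'_*|$ and the symmetry between pre- and post-collisional pairs.

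For \eqref{compactupper} I write, using \eqref{compactpiece},
\[
\ang{Kg, g} = \int_{\R^3} \nu_K(v) |g(v)|^2 dv - \ang{\Gamma(g, M), g}.
\]
The first term is handled by $|\nu_K(v)| \lesssim \ang{v}^\gamma$: since $\gamma < \gamma + 2s$, splitting the domain at a threshold $|v| = R_\eta$ chosen so that $\ang{v}^\gamma \leq \eta \ang{v}^{\gamma+2s}$ outside the ball gives $\int |\nu_K| |g|^2 \leq \eta |g|_{L^2_{\gamma+2s}}^2 + C_\eta |g|_{L^2}^2$. For the second term, I represent $g \mapsto \Gamma(g, M)$ as a velocity integral operator with kernel $\mathcal{K}(v,v')$; the Maxwellian $M$ in the second slot of $\Gamma$, combined with $M_*$ in the collision measure, forces $\mathcal{K}$ to decay rapidly in $|v| + |v'|$ (this is the Grad-type kernel calculation, essentially the same one that underlies the compactness of $K$ in the classical cut-off theory, except one must now exploit the cancellation in \eqref{compactpiece} to absorb the angular singularity before bounding the kernel). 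A Schur-type test then bounds $|\ang{\Gamma(g,M),g}|$ by $|g|_{L^2}^2$, and the same weight-redistribution trade used for $\nu_K$ upgrades this to the form $\eta |g|_{L^2_{\gamma+2s}}^2 + C_\eta |g|_{L^2}^2$.

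The main obstacle is the bound $|g|_B^2 \lesssim |g|_{N^{s,\gamma}}^2$ in the first step. The difficulty is geometric rather than algebraic: one must correctly identify the paraboloid hidden in the collisional geometry and push the Carleman change of variables through while keeping simultaneous track of the kinetic factor $|v-v_*|^\gamma$, the two Gaussian factors $M_* M_*'$, the angular singularity $\theta^{-1-2s}$, and the sharp weight exponent $(\gamma+2s+1)/2$. Showing that the resulting kernel $k(v,v')$ is cleanly dominated, pointwise, by the sharp kernel of the designer space $N^{s,\gamma}$---including the cutoff $\ind_{|\ext{v}-\ext{v'}|\leq 1}$---is the technical core of the lemma, and is the payoff of having formulated the norm on the paraboloid in the first place. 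The secondary obstacle, cancellation of the angular singularity in the kernel of $\Gamma(\cdot, M)$, is expected to be routine by comparison, since there the Gaussian in the second slot provides an overall smoothing effect.
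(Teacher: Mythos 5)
Your reduction of \eqref{compactupper} to $\ang{Kg,g}=\int \nu_K|g|^2-\ang{\Gamma(g,M),g}$ and your treatment of the $\nu_K$ term are fine, but the step ``represent $g\mapsto\Gamma(g,M)$ as an integral operator with rapidly decaying kernel and apply a Schur test'' is where the argument breaks down, and this is the heart of the lemma. For a non-cutoff kernel \eqref{kernelQ} the gain and loss parts of $\Gamma(g,M)$ are each divergent, so no absolutely convergent kernel $\mathcal{K}(v,v')$ exists; the cancellation one must keep is $g_*'M'-g_*M$, whose singular part contains the difference of $g$ at $v_*'$ and $v_*$. That difference cannot be absorbed by the Gaussians, and it cannot be handled by differentiating $g$ either, since the entire point of \eqref{compactupper} (as the paper stresses, in contrast with the Landau case) is that no derivatives fall on $g$. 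Converting this into a bound that sees only weighted $L^2$ norms of $g$ is exactly what the paper's dual representation \eqref{3dualZ}, the dyadic decomposition of the angular singularity, and the anisotropic Littlewood--Paley expansion of the smooth slot accomplish: in the proof of Lemma \ref{CompactEst} all differences are moved onto $e_l=M$, whose projections satisfy $|M^{-1}Q_jM|\lesssim 2^{-2j}\ang{v}^4$, and the near-singularity cancellation estimates \eqref{compactineq}, \eqref{othercptineq} are summed against the ``trivial'' estimates \eqref{tplusgf}, \eqref{tstargf}; then \eqref{compactupper} follows by taking $\eta=\delta$ small in the $(\eta,\delta)$-norms. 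Calling this step ``routine by comparison'' and invoking the Grad cutoff calculation misses the key idea: the analogy fails precisely because Grad's kernel converges absolutely.

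For \eqref{normupper} your route is genuinely different from the paper's, which obtains it in one stroke by applying the trilinear estimate \eqref{nlineq} to $\ang{\Gamma(M,g),g}$ (with $|g|_{L^2}\lesssim|g|_{N^{s,\gamma}}$) together with $|\nu_K|\lesssim\ang{v}^{\gamma}$; a direct Carleman-kernel argument is plausible but, as written, has a hole. The kernel \eqref{kernel} is not supported in $\{|\ext{v}-\ext{v'}|\le 1\}$: since $v-v'=v_*'-v_*$ and $|v|^2-|v'|^2=|v_*'|^2-|v_*|^2$, the factors $M_*M_*'$ give Gaussian decay in $|\ext{v}-\ext{v'}|$, not a cutoff, so pointwise domination by the kernel in \eqref{normdef} is false off that set. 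You must split the tail region, use $(g'-g)^2\le 2g^2+2g'^2$ there, and prove $\int K(v,v')\,\ind_{|\ext{v}-\ext{v'}|>1}\,dv'\lesssim\ang{v}^{\gamma+2s}$; the naive bound (Gaussian decay in $|\ext{v}-\ext{v'}|$ times the weight $\ang{v'}^{\gamma+2s+1}$) yields $\ang{v}^{\gamma+2s+1}$, one power too many, and recovering the extra $\ang{v}^{-1}$ requires the anisotropic analysis of $M_*M_*'$ restricted to the Carleman hyperplane (the angular localization of width $\ang{v}^{-1}$), i.e.\ essentially the computation of Section \ref{sec:pe} run in the upper-bound direction. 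So your approach can likely be completed, but it costs a nontrivial kernel estimate that the paper deliberately avoids, since Lemma \ref{NonLinEst} (needed anyway for the nonlinear term) already delivers \eqref{normupper} for free.
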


In these estimates there are several things to observe.  
First of all there are no derivatives in the ``compact estimate'' from \eqref{compactupper}, which should be contrasted with the corresponding estimate in the Landau case \cite[Lemma 5]{MR1946444} in which the upper bound  requires the inclusion of derivatives. Further  \eqref{normupper} is a  simple consequence of the main estimate \eqref{nlineq}.  This estimate tells us that the ``norm'' piece of the linear term, given by $\ang{N g,g}$, is bounded above by a uniform constant times $\nsm g\nsm_{N^{s,\gamma}}^2$.  This means that the coercive inequality  in the next Lemma \ref{estNORM3} is essentially sharp.

\begin{lemma}
\label{estNORM3}
For the spaces defined in \eqref{normdef} and \eqref{normexpr}, we have the uniform estimate
\begin{equation*}
\nsm f \nsm_{N^{s,\gamma}}^2 \lesssim \nsm f \nsm_{L^2_{\gamma+2s}}^2 
+ 
\nsm f \nsm_{B}^2.
\end{equation*}
\end{lemma}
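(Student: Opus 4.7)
\bigskip
\noindent\textbf{Proof proposal.} The $L^2_{\gamma+2s}$-piece of $\nsm f\nsm_{N^{s,\gamma}}^2$ already appears on the right, so it suffices to control the non-local piece
\[
I[f] \eqdef \int_{\R^3}\! dv \int_{\R^3}\! dv'\, (\ang v \ang{v'})^{\frac{\gamma+2s+1}{2}} \frac{(f(v)-f(v'))^2}{|\ext v - \ext{v'}|^{3+2s}}\, \ind_{|\ext v - \ext{v'}| \le 1}
\]
by $\nsm f\nsm_B^2 + \nsm f\nsm_{L^2_{\gamma+2s}}^2$. The plan is to rewrite $\nsm f\nsm_B^2$ via a Carleman-type representation so that it acquires the same $(v,v')$-integral structure as $I[f]$, then compare the resulting kernel against the one defining $I[f]$.

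First, I would apply the Carleman (or $\omega$-)representation to $\nsm f\nsm_B^2$: for fixed $v$ and $v'$, the collision is parametrized by $v_*$ lying on the 2D affine plane through $v'$ perpendicular to $v-v'$, writing $v_* = v' + w$ and $v_*' = v + w$ with $w\perp(v-v')$. The Jacobian and the bound \eqref{kernelQ} on the angular part (via $\sin(\theta/2) = |v-v'|/|v-v_*|$) combine to yield
\[
\nsm f\nsm_B^2 \;\approx\; \int_{\R^3}\! dv \int_{\R^3}\! dv'\, \frac{(f(v)-f(v'))^2}{|v-v'|^{3+2s}} \int_{w\perp(v-v')} dw\; M(v'+w)\,M(v+w)\,(|v-v'|^2 + |w|^2)^{\frac{\gamma+1+2s}{2}}.
\]
The key identity is that $|v_*|^2 + |v_*'|^2 = 2|w + \bar v_\perp|^2 + (|v|^2 + |v'|^2 - 2|\bar v_\perp|^2)$, where $\bar v_\perp$ denotes the orthogonal projection of $(v+v')/2$ onto the plane perpendicular to $v-v'$. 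Since $|\ext v - \ext{v'}|^2 = |v-v'|^2 + \tfrac14(|v|^2-|v'|^2)^2$, a direct computation shows $|v|^2 + |v'|^2 - 2|\bar v_\perp|^2 = \tfrac{|v-v'|^2}{2} + \tfrac{(|v|^2-|v'|^2)^2}{2|v-v'|^2}$, so the Gaussian in $w$ is controlled and integrates to a dimensional constant while the remaining Maxwellian factor is $\exp\!\bigl(-c\,|\ext v - \ext{v'}|^2/|v-v'|^2\bigr)$, up to harmless shifts in $\bar v_\perp$.

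At this point I would split the integration into two regimes. In the \emph{proximal regime} $|\ext v - \ext{v'}| \lesssim |v-v'|\, \langle \bar v \rangle^{0}$ — more precisely, where $\tfrac{|v|^2-|v'|^2}{|v-v'|}$ is $O(1)$ relative to $\langle v\rangle$ — the exponential factor in the Carleman kernel is bounded below by a constant, and the standard Gaussian integral over $w$ picks up a factor comparable to $\ang{v}^{\gamma+2s+1}$ (from $(|v-v'|^2 + |w|^2)^{(\gamma+2s+1)/2}$ restricted to the region where $M$ is non-degenerate, combined with the tail of $M$). This matches $(\ang v \ang{v'})^{(\gamma+2s+1)/2}$ up to constants (recalling $\ang v \approx \ang{v'}$ when $|\ext v - \ext{v'}|\le 1$), so the contribution of $I[f]$ from this regime is dominated directly by $\nsm f\nsm_B^2$.

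In the complementary \emph{degenerate regime}, $v$ and $v'$ are nearly collinear so that $|v-v'|$ is much smaller than $|\ext v - \ext{v'}|$; here the Carleman kernel is exponentially small and $\nsm f\nsm_B^2$ cannot absorb it, so I would instead estimate $(f(v)-f(v'))^2 \le 2f(v)^2 + 2f(v')^2$ and bound $I[f]$ over this region using Fubini. The crucial estimate to check is $\int \mathcal K_{N^{s,\gamma}}(v,v')\,\ind_{\rm deg}\, dv' \lesssim \ang v^{\gamma+2s}$, which follows from the fact that the degenerate set lies essentially on the 1D line through the origin along $v$, together with the favorable growth of $|\ext v - \ext{v'}|$ along radial directions. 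Combining the two regimes yields the claimed bound. The principal obstacle is the degenerate regime: integrability of the $N^{s,\gamma}$ kernel against a lower-dimensional set requires exploiting precisely the paraboloid geometry (through the $|v|^2-|v'|^2$ term inside $|\ext v - \ext{v'}|$) rather than the Euclidean one, and accounting sharply for the weight $(\ang v \ang{v'})^{(\gamma+2s+1)/2}$.
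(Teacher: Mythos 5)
Your proximal-regime argument is essentially the paper's own first step: a Carleman representation of $\nsm f\nsm_B^2$ with kernel $K(v,v')$ and a pointwise lower bound $K(v,v')\gtrsim (\ang v\ang{v'})^{\frac{\gamma+2s+1}{2}}|\ext v-\ext{v'}|^{-3-2s}$ on the set $\{|v-v'|\le 1,\ ||v|^2-|v'|^2|\le |v-v'|\}$, and that part is sound. The gap is in your degenerate regime. The ``crucial estimate'' you propose, $\int \mathcal K_{N^{s,\gamma}}(v,v')\,\ind_{\rm deg}\,dv'\lesssim \ang v^{\gamma+2s}$, is false, because the degenerate set is not lower-dimensional where it matters: it reaches the diagonal $v'=v$ as a solid region. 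Indeed, for $|v|\gtrsim 1$, within the shell $|v-v'|\approx\epsilon$ the condition $||v|^2-|v'|^2|\ge |v-v'|$ excludes only a thin slab of thickness $\approx \epsilon/|v|$ transverse to the radial direction, so the degenerate part has essentially full measure in the shell, and on it $|\ext v-\ext{v'}|\approx ||v|^2-|v'|^2|$. Writing $v'=v+\rho\hat v+w_\perp$, the shell's contribution to $\int_{\rm deg} dv'\,|\ext v-\ext{v'}|^{-3-2s}\ind_{|\ext v-\ext{v'}|\le1}$ is $\approx \int_{\epsilon/|v|}^{\epsilon}d\rho\,\epsilon^2\,(|v|\rho)^{-3-2s}\approx \epsilon^{-2s}|v|^{-1}$, and summing over dyadic $\epsilon\le 1$ diverges like $\sum_k 2^{2sk}$. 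So once you replace $(f-f')^2$ by $2f^2+2f'^2$ you have discarded exactly the cancellation that makes the fractional kernel finite, and no Fubini argument can close the degenerate regime; that region genuinely carries $s$ derivatives (in the radial/parabolic direction) and cannot be absorbed into $\nsm f\nsm_{L^2_{\gamma+2s}}^2$.

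This is precisely the ``intrinsic obstruction'' the paper points out after \eqref{coerciveptwise}: $K(v,v')$ decays exponentially in $|v-v'|$ when $v,v'$ are nearly collinear, while the target kernel does not, so neither a pointwise comparison nor a crude $L^2$ bound can work there. The paper's way out (Section \ref{redistsec}) is a cancellation-preserving, non-pointwise comparison: localize with a partition of unity on the paraboloid, flatten with the isometries $\ext\tau_{v_0}$, and then compare on the Fourier side (Proposition \ref{fourierdist}) the full kernel $K_1(u)=|u|^{-3-2s}\ind_{|u|\le1}$ with the cone-restricted kernel $K_2(u)=|u|^{-3-2s}\ind_{|u|\le1}\ind_{|u_4|\le\epsilon|u|}$; both multipliers are $\approx|\xi|^{2s}$, so control of squared differences in the ``good'' cone of directions yields control in all directions up to an $L^2_{\gamma+2s}$ error. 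To repair your proof you need this Fourier redistribution step (or an equivalent mechanism transferring regularity from the transverse directions to the radial one), not a size estimate of the kernel on the degenerate set.
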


The coercive inequality in Lemma \ref{estNORM3} and \eqref{normupper} taken together demonstrate that the ``norm piece'' \eqref{normpiece} is actually comparable to our designer norm $N^{s,\gamma}$, i.e.,
\[ \ang{ N f, f} \approx |f|_{N^{s,\gamma}}^2. \]
While the upper bound \eqref{normupper} will have no major use in our arguments, it nevertheless is important because it demonstrates that the non-isotropic space $N^{s,\gamma}$ naturally arises in this near Maxwellian problem.

Finally we have a useful, intermediate inequality which is also the key to the estimate for $K$ in \eqref{compactupper}.
For the following lemma, we take $e_l$ to be a Maxwellian, or a Maxwellian times any polynomial $p$ in $\ext{v}$ of degree at most two:

\begin{lemma}
\label{CompactEst}
For all positive $\eta,\delta$, we have the following uniform estimate
\begin{equation*}
 \left| \ang{\Gamma(g,e_l),f} \right| + \left| \ang{\Gamma(g,f),e_l} \right| \lesssim \nsm g\nsm_{\eta,\delta} \nsm f \nsm_{\delta,\eta}.
\end{equation*}
\end{lemma}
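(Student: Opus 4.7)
My plan is to rewrite each trilinear form via the pre-post collisional change of variables $(v,v_*,\sigma)\leftrightarrow(v',v_*',\sigma')$, which has unit Jacobian and preserves $B$, so that the Maxwellian factors $M_*$ and $e_l$ combine to absorb the angular singularity in $B$; Cauchy--Schwarz will then separate $g$ and $f$. Applying the change of variables to each primed $g$-factor yields
\begin{align*}
  \ang{\Gamma(g,e_l),f} &= \int B\, g_* e_l \bigl(M_*' f' - M_* f\bigr)\, dv\, dv_*\, d\sigma, \\
  \ang{\Gamma(g,f),e_l} &= \int B\, g_* f \bigl(M_*' e_l' - M_* e_l\bigr)\, dv\, dv_*\, d\sigma,
\end{align*}
so $g$ and $f$ appear only in unprimed positions and all collisional differences are concentrated on the Maxwellian factors.

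Next I would tame the angular singularity using the smoothness and Gaussian decay of those Maxwellian factors.  For the second form, the conservation of energy \eqref{energy} gives the pointwise identity $MM_*=M'M_*'$, and since $e_l = p(\ext{v})M$ with $p$ a polynomial of degree at most two in $\ext{v}$,
\[ M_*' e_l' - M_* e_l = \bigl[p(\ext{v'}) - p(\ext{v})\bigr]\, M M_*, \]
so the Gaussian $MM_*$ fully localizes the $(v,v_*)$-integrand.  Now $p(\ext{v'}) - p(\ext{v})$ is a polynomial of degree at most two in $\ext{v'}-\ext{v}$; by $\sigma$-symmetry about $k = (v-v_*)/|v-v_*|$, the part linear in the $\sigma$-transverse component averages to zero on $\sph$, and all remaining terms are of size $O\bigl(|v-v_*|^2\sin^2(\theta/2)\ang{v}^2\ang{v_*}^2\bigr)$.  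This produces an integrable angular factor $\sin^2(\theta/2)\cdot\sin\theta\, b(\cos\theta)\lesssim \theta^{1-2s}$ uniformly for $s\in(0,1)$.  For the first form I would decompose $M_*'f' - M_*f = M_*'(f'-f) + (M_*'-M_*)f$: the $(M_*'-M_*)f$ piece is treated exactly as above via Taylor expansion of $M$ (with the $\sigma$-transverse first-order part vanishing), while for the $M_*'(f'-f)$ piece I would apply Cauchy--Schwarz in $(v,v_*,\sigma)$ with the Gaussian $e_l M_*'$ as joint density, then control $\int B (f'-f)^2 e_l M_*'$ by a further $\sigma$-symmetric cancellation and Taylor expansion on $e_l$.

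Finally I would conclude by Cauchy--Schwarz in $(v,v_*,\sigma)$: splitting the Gaussian $MM_*$ (respectively $e_l M_*'$) into a $v$-weight and a $v_*$-weight, one factor bounds the $g$-dependence and the complementary factor bounds the $f$-dependence.  The hypothesis $\gamma > -\min\{2s,3/2\}$ ensures that the kinetic factor $|v-v_*|^\gamma$ is locally integrable on the diagonal and is absorbed by the Gaussian moments.  The free parameters $\eta,\delta$ in the $(\eta,\delta)$-norm encode precisely the freedom to place the heavy weight $\ang{v}^{\gamma+2s}$ either on $g$ or on $f$, yielding the desired bound $\nsm g\nsm_{\eta,\delta}\,\nsm f\nsm_{\delta,\eta}$.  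The main obstacle I foresee is the high-singularity regime $s\ge 1/2$, where only the fully symmetrized quadratic remainder survives against $\theta^{-1-2s}$; there one must carefully identify and control all lower-order terms in both $p(\ext{v'})-p(\ext{v})$ and (for the first form) the analogous expansions of $M$ and $e_l$, uniformly in the parameters $\eta,\delta$.
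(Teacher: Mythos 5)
Your treatment of the second form $\ang{\Gamma(g,f),e_l}$ is sound and in fact more elementary than the paper's: the identity $M_*'e_l'-M_*e_l=[p(\ext{v'})-p(\ext{v})]MM_*$ is correct, the $\sigma$-symmetrization kills the transverse linear part, and the surviving terms carry $\sin^2(\theta/2)$, so the angular integral converges for all $s\in(0,1)$ and the Gaussian $MM_*$ absorbs all weights; since $\gamma+2s\ge 0$, a bound by $\nsm g\nsm_{L^2}\nsm f\nsm_{L^2}$ plus the far-from-singularity region (which produces the $\ang{\cdot}^{\gamma+2s}$ weight on one factor) is indeed dominated by $\nsm g\nsm_{\eta,\delta}\nsm f\nsm_{\delta,\eta}$, by \eqref{expand}. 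The paper instead runs this term through the dyadic/Littlewood--Paley machinery ( \eqref{tplusgh}, \eqref{tminusgh}, \eqref{othercptineq} ), so here your route is a legitimate, shorter alternative.

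The first form $\ang{\Gamma(g,e_l),f}$ is where your plan breaks down. Your splitting $M_*'f'-M_*f=M_*'(f'-f)+(M_*'-M_*)f$ places the collisional difference on $f$, which is only a weighted-$L^2$ function, and the whole point of this lemma (and its use in \eqref{compactupper} and in the compactness argument of Lemma \ref{lowerN}) is that no regularity of $f$ may be used. Concretely: your Cauchy--Schwarz with density $e_lM_*'$ produces the factor $\int B\,g_*^2\,e_lM_*'$, which diverges because the angular singularity is non-integrable and nothing in that integrand vanishes as $\theta\to0$; and the other factor $\int B\,(f'-f)^2\,e_lM_*'$ cannot be tamed by ``$\sigma$-symmetric cancellation and Taylor expansion on $e_l$'' since the integrand is nonnegative (no oscillation to cancel) and $e_l$ enters only as an undifferenced weight at the single point $v$. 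In fact that quantity is essentially the semi-norm \eqref{normexpr}, i.e.\ comparable to $\nsm f\nsm_{N^{s,\gamma}}^2$ by Lemma \ref{estNORM3} and \eqref{normupper}, so even a repaired version of this step would yield a bound in terms of $\nsm f\nsm_{N^{s,\gamma}}$, not $\nsm f\nsm_{\delta,\eta}$. The missing idea is the dual (adjoint) Carleman representation \eqref{3dualZ}: rewriting the loss term as in $T^j_*$ moves the difference onto the $h$-slot, which here holds the smooth, rapidly decaying $e_l$, so that $f$ only ever appears undifferenced; the Taylor expansion must then be done along the paraboloid (the cancellation estimates \eqref{compactineq}, with second-order expansion when $s\ge\tfrac12$), combined with the trivial estimates \eqref{tplusgf}, \eqref{tstargf} away from the singularity and the rapid decay of the projections $Q_je_l$. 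Without this dual representation (or an equivalent device placing the difference on $e_l$ rather than on $f$), the estimate for $\ang{\Gamma(g,e_l),f}$ does not close.
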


Functions of the sort as $e_l$, will come up in \eqref{base} later on.  Of course, Lemma \ref{CompactEst} is expected to hold whenever $e_l$ is smooth and rapidly decaying, for example, but we restrict $e_l$ to have the specific form mentioned here because it simplifies the proof and the additional generality will be of no use herein. 

In the following we will outline several key new ideas in our proof.

\subsection{Outline of the article, and overview of our proof}
It has been known to the experts for some time that the sum total of the inequalities in Section \ref{mainESTsec} would be sufficient for global existence \cite{MR2000470}, 
although crucially the spaces in which these inequalities should be proven was unknown.
Notice that Lemma \ref{sharpLINEAR} implies
\begin{align}
\ang{L g,g}  & \gtrsim |g|_{N^{s,\gamma}}^{2}  - \mbox{lower order terms}, \label{lowerbound}
\end{align}
This coercive lower bound inequality is fundamental to global existence.
Since the the operators $\Gamma$ and $L$ are intimately connected, among other consequences, this means that if both of \eqref{lowerbound} and Lemma \ref{NonLinEst} are simultaneously true, then the Hilbert space $N^{s,\gamma}$ satisfying these inequalities is unique.   From this point of view the first major difficulty which we had to overcome was the identification of the appropriate Hilbert space (which we have already described).

\subsection*{Identification of the space $N^{s,\gamma}$}

It turned out that the candidate Hilbert space $N^{s,\gamma}$ is a weighted, anisotropic fractional Sobolev space \eqref{normdef}  and \eqref{normpiece} which corresponds to fractional differentiation on the paraboloid in $\R^4$.  
To estimate this space, we find it convenient to use a geometric Littlewood-Paley-type decomposition, inspired by the work of Stein \cite{MR0252961}.  We do not, however, take a semigroup approach to the actual construction of our Littlewood-Paley projections as Stein did.  Instead, we use the embedding of the paraboloid in $\R^4$ to our advantage.  If $d \mu$ is the Radon measure on $\R^4$ corresponding to surface measure on the paraboloid, our approach is to take a renormalized version of the {\it four-dimensional, Euclidean} Littlewood-Paley decomposition of the {\it measure} $g d \mu$ as our non-isotropic, three-dimensional, Littlewood-Paley-type decomposition for the function $g$.  Among other benefits, this approach automatically allows for a natural extension of the Littlewood-Paley projections $P_j g$ and $Q_j g$ (from Section \ref{sec:acd}) as smooth functions defined on $\R^4$ in a neighborhood of the paraboloid.  This allows us to avoid a direct discussion of the induced metric on $\R^3$ by phrasing our results in terms of the projections $P_j g$, $Q_j g$, and various Euclidean derivatives of these functions in $\R^4$ instead of $\R^3$.

\subsection*{The upper bound inequality}

The proof of the main non-linear estimate in Lemma \ref{NonLinEst} is based on a dyadic decomposition of the singularity of the collision kernel $B$ in \eqref{kernelQ} and \eqref{kernelP} as well as a Littlewood-Paley-type decomposition of the functions $h$ and $f$.  The end result is that one is led to consider a triple sum
\[ \sum_{k=-\infty}^\infty \sum_{j' = 0}^\infty \sum_{j=0}^\infty \left| \ang{\Gamma_k(g,h_{j'}, f_{j}} \right|. \]
Here $\Gamma_k$ is the non-linear operator \eqref{gamma0} summed over a special dyadic decomposition of the singularity, and $h_{j'}$, $f_{j}$ are the functions $h$, $f$ expanded in terms of the anisotropic Littlewood-Paley decomposition described just above.
Control over the sum of the pieces rests on two important observations.
First, when considering terms for which $2^{-k}$ is large relative to $2^{-j'}$ and $2^{-j}$, a favorable estimate holds simply because the support of $B_k(|v-v_*|,\cos \theta)$ is compact and bounded away from the singularity at $\theta = 0$.  This is the regime which may be thought of as being far from the singularity.
Second, when either $2^{-j'}$ or $2^{-j}$ is large relative to $2^{-k}$, i.e., near the singularity, an improvement may be made by exploiting the inherent cancellation structure of $\Gamma_k$.  The cost which must be paid in order to use this cancellation is that derivatives must fall on either $h_{j'}$ or $f_{j}$.  In this case, with the dual formulation (described next)  it is always possible to arrange for the derivatives to be placed on the function of our choice.  Placing the derivatives on the function of largest scale (that is, the function whose index is least) gives some extra decay that allows one to sum all the terms by comparison to a geometric series.  The cancellation structure is not measured in the usual way, we measure cancellations using the metric on the paraboloid.

It should be noted that our analysis allows us to essentially ignore the dependence of $\Gamma(g,h)$ on the function $g$; this is a great advantage, as it means that one may think of the trilinear form $\ang{\Gamma(g,h),f}$ as a family of bilinear forms in $h$ and $f$ parametrized by the function $g$.  This observation is essential, since the fully trilinear form falls well outside the scope of existing tools in harmonic analysis.

\subsection*{The dual formulation}
A key point of significant technical importance in the proof of the upper bound inequality is that we must be able to make estimates for $\ang{\Gamma(g,h),f}$ which exploit the intrinsic cancellations at the cost of placing derivatives on any one of the two functions $h$ or $f$ that we choose.  If we were not forced to consider fractional derivatives, a suitable tool would be integration-by-parts.  As it stands however, it is necessary to find two different yet analogous representations of the trilinear form $\ang{\Gamma(g,h),f}$ which clearly relate cancellation to smoothness of $h$ and $f$, respectively.  It turns out that placing derivatives on $f$ is fairly straightforward to do using existing Carleman-type representations for the bilinear operator $\Gamma$.  In particular, one may apply a standard pre-post change of variables on the gain term ${\mathcal Q}^+$ to obtain the representation
\begin{align*}
\ang{\Gamma(g,h),f} = \int_{\R^3} dv \int_{\R^3} dv_* \int_{\sph} d \sigma B ~ g_* h (M_*' f' - M_* f),
\end{align*}
which is justified by approximation of $B$ by a sequence of cut-off kernels.   Clearly, for each fixed $g$, there is an operator $T_g$ such that $\ang{\Gamma(g,h),f} = \ang{T_g f,h}$, and moreover, the formula above can be used to write down an explicit formula for $T_g$.
To place derivatives on $h$, on the other hand, it is necessary to find a new representation which involves only differences of $h'$ and $h$, i.e., no differences of $g$ or $f$.  To that end, there is a need to compute what we call the ``dual formulation,'' since this amounts to writing down a formula for $T_g^*$.  These computations may be found in the Appendix; the end result is that
\begin{align*}
\ang{\Gamma(g,h),f} = \int_{\R^3} dv &  \int_{\R^3} dv_* \int_{\sph} d \sigma   B g_* f' \\ & \times \left(M_*' h \vphantom{\int} \right.  -  \left. M_* h' \frac{|v'-v_*|^3 \Phi(v'-v_*)}{|v-v_*|^3 \Phi(v-v_*)} \right).
\end{align*}
An interesting consequence of this formula is that the gain term ${\mathcal Q}^+$ is unchanged and only the loss term ${\mathcal Q}^-$ differs in these two formulas.  These two formulas also demonstrate the essentially straightforward dependence on $g$ which we use to apply traditionally bilinear methods to the trilinear form.

\subsection*{The coercive inequality}
The key to proving \eqref{lowerbound}, on the other hand, is to show the equivalence between 
\eqref{normdef} and the inner product $\ang{N f,f}$ from
\eqref{normpiece}.
We prove equivalent estimates in terms of the Littlewood-Paley projections.  This analysis consists of two parts.  The first is rewriting \eqref{normexpr} with a Carleman representation as
\begin{equation}
| f |_B^2
=
\int_{\R^3} dv \int_{\R^3} dv' K(v,v') (f'-f)^2, \label{semiCARLEMAN}
\end{equation}
for an appropriate function $K(v,v')$, see \eqref{kernel} .  If we let $d(v,v')$ denote the Euclidean distance in $\R^4$ between the points $(v,\frac{1}{2} |v|^2)$ and $(v', \frac{1}{2} |v'|^2)$, a simple pointwise estimation of this function $K$ demonstrates that
\[ K(v,v') \gtrsim (\ang{v} \ang{v'})^{\frac{\gamma+2s+1}{2}} (d(v,v'))^{-3 - 2s}, \]
for a large set of pairs $(v,v')$, the exact description of which is slightly complicated.  The second part is to demonstrate that the set of pairs for which this inequality holds is large enough to conclude an integral version of this inequality, namely,
\[ \ang{N f,f} \gtrsim \int _{\R^3} dv \int_{\R^3} dv' ~ (\ang{v} \ang{v'})^{\frac{\gamma+2s+1}{2}} (f' - f)^2 (d(v,v'))^{-3-2s}  {\mathbf 1}_{d(v,v') \leq 1}. \]
This latter argument is accomplished by means of a partition of unity and Fourier analysis, the key point being that the expressions
\[ \int_{\R^3} dv \int_{\R^3} dv' (f'-f)^2 \frac{\Omega(v-v')}{|v-v'|^{3+2s}}, \]
are uniformly comparable for all suitable $f$ as $\Omega$ ranges over the family of nonnegative, homogeneous functions of degree $0$ for which
$|\Omega |_{L^1(\mathbb{S}^2)} \gtrsim 1$ and $| \Omega |_{L^\infty(\mathbb{S}^2)} \lesssim 1$. \\

 The plan of the rest of the paper is as follows.  In Section \ref{physicalDECrel} we will formulate the first major physical decomposition of the trilinear form associated with the non-linear collision operator \eqref{gamma0}.  With this we prove the main ``size and support'' estimates.  We finish this section by formulating the main cancellation inequalities using the metric on the paraboloid.

In Section \ref{sec:aniLP}, we develop the anisotropic Littlewood-Paley decomposition which is associated to the geometry of the paraboloid.  We further prove estimates connecting the Littlewood-Paley square functions with our norm \eqref{normdef}.

In Section \ref{sec:upTRI} we prove the key estimate for the trilinear form,  Lemma \ref{NonLinEst}.  This estimate will rely heavily on all of the developments in the previous two sections.  The ``compact estimate'' in Lemma \ref{CompactEst} will follow shortly from these developments, and also the sharp linear upper bounds from  Lemma \ref{sharpLINEAR}. 

The last estimates on the velocity side are contained in Section \ref{sec:mainCOER}, where it is shown crucially that the main norm \eqref{normdef} is comparable to both our anisotropic Littlewood-Paley square function and also the space which is  generated by the linearized operator: $\ang{Nf, f}$ below \eqref{normpiece}.  This involves several ideas, including  estimating a Carleman-type representation and what we call a ``Fourier redistribution'' argument.  We further develop useful functional analytic properties of $N^{s,\gamma}$.

In Section \ref{sec:deBEest}, we show that all of our singular estimates on the velocity variables from the previous sections  can be included in the current cut-off theory.
Specifically, we use the space-time  estimates and non-linear energy method that was introduced by Guo \cite{MR2000470,MR2013332,MR1946444}.
This works in particular because our new arguments for the velocity variables outlined above are morally fully decoupled from the argument to handle the space-time aspects of the equation.
We further remark that our estimates above are, in general, flexible enough to adapt to other modern cut-off methods.

Lastly, the Appendix contains Carleman-type representations and a derivation of a ``dual formulation'' for the trilinear form \eqref{3dualZ} that is used in the main text.

\section{Physical decomposition and related estimates}
\label{physicalDECrel}

In this section we introduce the first major decomposition and prove several estimates which will play a central role in establishing the main inequality for the non-linear term $\Gamma$ from \eqref{gamma0} and the norm $| \cdot |_{N^{s,\gamma}}$.  This first decomposition is a decomposition of the singularity of the collision kernel.  For various reasons, it turns out to be useful to decompose $b ( \cos \theta)$ from \eqref{kernelQ} to regions where $\theta \approx 2^{-j} |v-v_*|^{-1}$, rather than a simpler dyadic decomposition not involving $|v-v_*|$.  The principal benefit for doing so is that this extra factor makes it easier to prove estimates on the space $L^2_{\gamma+2s}(\mathbb{R}^3_v)$  because the weight $\Phi(|v-v_*|)$ from  \eqref{kernelP}  is already present in the kernel and the extra weight $|v-v_*|^{2s}$ falls out automatically from our decomposition.

The estimates to be proved fall into two main categories:  the first are various $L^2$- and weighted $L^2$-inequalities which follow directly from the size and support conditions on our decomposed pieces (such estimates are typically called ``trivial'' estimates).   The second type of estimate will assume some sort of smoothness and obtain better estimates than the ``trivial'' estimates by exploiting the cancellation structure of the non-linear term $\Gamma$ from \eqref{gamma0}.  It is already worth stating at this point that the particular smoothness assumptions we make are dictated by the problem and will  specifically be somewhat unusual; in particular, they will not correspond to the usual, Euclidean Sobolev spaces on $\R^3$.

\subsection{Dyadic decomposition of the singularity}
Let $\{ \chi_j \}_{j=-\infty}^\infty$ be a partition of unity on $(0,\infty)$ such that $\nsm \chi_j\nsm_{L^\infty} \leq 1$ and $\chi_j$ is supported on $[2^{-j-1},2^{-j}]$.  
For each $j$, let
\[B_j = B_j(v-v_*,\sigma) \eqdef \Phi(|v-v_*|) b \left( \left< \frac{v-v_*}{|v-v_*|}, \sigma \right> \right) \chi_j (|v - v'|). \]
Note that
\[ 
|v-v'|^2 = \frac{|v-v_*|^2}{2} \left( 1 - \left< \frac{v-v_*}{|v-v_*|}, \sigma \right>  \right)
= |v-v_*|^2 \sin^2 \frac{\theta}{2}.
\]
Hence, the condition $|v-v'| \approx 2^{-j}$ is equivalent to the condition that the angle between $\sigma$ and $\frac{v-v_*}{|v-v_*|}$ is comparable to $2^{-j} |v-v_*|^{-1}$.  With this partition, we define
\begin{align*}
T^j_{+}(g,h,f)  & \eqdef \int_{\R^3} dv \int_{\R^3} dv_* \int_{\sph} d \sigma ~ B_j(v-v_*, \sigma) ~ g_* h M_*' f'   \\ 
T^j_{-}(g,h,f)  & \eqdef \int_{\R^3} dv \int_{\R^3} dv_* \int_{\sph} d \sigma ~ B_j(v-v_*, \sigma) ~ g_* h M_* f. 
\end{align*}
It turns out that we will also need to express the collision operator \eqref{gamma0} using its ``dual formulation.''  With the variant of Carleman's representation coming from Proposition \ref{carlemanA} and the notation 
$
M_*' = M(v+v_* - v')
$
(
$
=
\frac{M M_*}{M'}
$
on $E_{v_*}^{v'}$),
 we have the following alternative representation for $T^j_+$ as well as the definition of a third trilinear operator $T^j_*$ (based on the calculation \eqref{3dualZ} with, recall, $v_*' = v+v_* - v'$):
\begin{align*}
T^j_{+}(g,h,f)  & 
= 4 \int_{\R^3} dv' \int_{\R^3} dv_* \int_{E_{v_*}^{v'}} d \pi_{v} ~ 
\frac{B_j(v-v_*, 2v' - v- v_*)}{|v'-v_*| ~ |v-v_*|} ~ g_*  f' M_*' h 
\\ 
T^j_{*}(g,h,f) & 
\eqdef
4 \int_{\R^3} dv' \int_{\R^3} dv_* \int_{E_{v_*}^{v'}} d \pi_{v} ~ 
B_j ~ \frac{\Phi(v'-v_*)}{ \Phi(v-v_*)}~ \frac{ |v'-v_*|^2 }{|v-v_*|^4} ~ g_* f'  M_* h',
\end{align*}
where above we make the slight abuse of notation that
$$
B_j
=
B_j(v-v_*, 2v' - v- v_*) 
=
\Phi(|v-v_*|)b \left( \ang{\frac{v-v_*}{|v-v_*|}, \frac{2v' - v - v_*}{|2v' - v - v_*|} } \right)\chi_j (|v - v'|).
$$
In these integrals above $d\pi_{v}$ is Lebesgue measure on the two-dimensional plane $E_{v_*}^{v'}$ passing through $v'$ with normal $v' - v_*$, and $v$ is the variable of integration.  
When $f, g, h \in \mathcal{S}(\mathbb{R}^3)$, the pre-post collisional change of variables, the dual representation \eqref{3dualZ} from the appendix, and the previous calculations guarantee that 
\begin{align*}
 \left< \Gamma(g,h), f \right> & = 
\sum_{j=-\infty}^\infty  \left\{ T^j_+(g,h,f) - T^j_{-}(g,h,f) \right\}
\\
& = \sum_{j=-\infty}^\infty  \left\{ T^j_+(g,h,f) - T^j_{*}(g,h,f) \right\} . 
\end{align*}
These will be the general quantities that we estimate in the following sections.
The first step is to estimate each of  $T^j_+$, $T^j_-$, and $T^j_*$ using only the known constraints on the size and support of $B_j$.

\subsection{``Trivial'' analysis of the decomposed pieces}

Under the condition that $\gamma + 2s \ge 0$, the following basic inequality holds uniformly for all $\eta,\delta  > 0$:
\begin{align}
 \ang{v-v_*}^{\gamma + 2s}  & \lesssim \left( \eta \ang{v}^{\gamma+2s} + \delta^{-1} \right) \left( \delta \ang{v_*}^{\gamma+2s} + \eta^{-1} \right). \label{angineq1} 
\end{align}
We will use this inequality in all of our estimates and refer to it as the  $(\eta,\delta)$-inequality.  We begin with the following:

\begin{proposition}
For any integer $j$ and any $\eta,\delta > 0$, we have the uniform estimates
\begin{align}
 \left| T^j_{-}(g,h,f) \right|  & \lesssim 2^{2sj} \nsm g\nsm_{\delta,\eta} \nsm h\nsm_{\eta,\delta} \nsm f\nsm_{L^2_{\gamma+2s}}
 \label{tminusgh} 
 \\
 \left| T^j_{-}(g,h,f) \right| &  \lesssim 2^{2sj} \nsm g\nsm_{\delta,\eta} \nsm h\nsm_{L^2_{\gamma+2s}} \nsm f\nsm_{\eta,\delta}.
 \label{tminusgf}
\end{align}
Moreover, uniformly for any any integer $j$ we have
\begin{equation}
\left| T^j_{-}(g,h,f) \right|  \lesssim 2^{2sj} \nsm g\nsm_{L^2} \nsm h\nsm_{L^2_{\gamma+2s}} \nsm f\nsm_{L^2_{\gamma+2s}}. \label{tminussmall}
\end{equation}
\end{proposition}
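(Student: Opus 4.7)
The plan is to integrate out the $\sigma$-variable first, which provides the pointwise bound
\[
\int_{\sph} d\sigma \, B_j(v-v_*,\sigma) \lesssim 2^{2sj}\, \ang{v-v_*}^{\gamma+2s}.
\]
This comes from the fact that the support condition $|v-v'|\approx 2^{-j}$, equivalent to $\sin(\theta/2) \approx 2^{-j}/|v-v_*|$, localizes $\theta$ to an arc of length $\sim 2^{-j}/|v-v_*|$ (with the integral vanishing unless $|v-v_*|\gtrsim 2^{-j}$). The singular bound $b(\cos\theta)\sin\theta \lesssim \theta^{-1-2s}$ from \eqref{kernelQ} combines with $\Phi(|v-v_*|) = C_\Phi|v-v_*|^\gamma$ to yield $2^{2sj} |v-v_*|^{\gamma+2s}$, which is dominated by $2^{2sj}\ang{v-v_*}^{\gamma+2s}$ since $\gamma+2s\ge 0$. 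After this reduction, all three estimates become assertions about the iterated integral
\[
\int dv \int dv_* \, |g_*|\, |h|\, |f|\, M_*\, \ang{v-v_*}^{\gamma+2s}.
\]

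For the estimate \eqref{tminussmall}, I would exploit the strong decay of $M_*$: splitting $\ang{v-v_*}^{\gamma+2s} \lesssim \ang{v}^{\gamma+2s} + \ang{v_*}^{\gamma+2s}$, the second contribution is absorbed by the Gaussian via $M_* \ang{v_*}^{\gamma+2s} \lesssim M_*^{3/4}$, leaving the pointwise bound $M_*\ang{v-v_*}^{\gamma+2s} \lesssim M_*^{3/4}\ang{v}^{\gamma+2s}$ (using $\ang{v}^{\gamma+2s}\ge 1$). The $v_*$ and $v$ integrations then decouple and two applications of Cauchy-Schwarz immediately yield $\nsm g\nsm_{L^2}\nsm h\nsm_{L^2_{\gamma+2s}}\nsm f\nsm_{L^2_{\gamma+2s}}$.

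For \eqref{tminusgh} (with \eqref{tminusgf} following by the $h\leftrightarrow f$ symmetry of $T^j_-$), I would first apply Cauchy-Schwarz in $v_*$, pairing $|g_*|(\delta\ang{v_*}^{\gamma+2s}+\eta^{-1})^{1/2}$ against $M_* \ang{v-v_*}^{\gamma+2s}(\delta\ang{v_*}^{\gamma+2s}+\eta^{-1})^{-1/2}$. The first factor is exactly the pointwise weight controlled by $\nsm g\nsm_{\delta,\eta}$. For the residual integral
\[
\int M_*^2 \, \frac{\ang{v-v_*}^{2(\gamma+2s)}}{\delta\ang{v_*}^{\gamma+2s}+\eta^{-1}}\, dv_*,
\]
splitting $\ang{v-v_*}^{2(\gamma+2s)} \lesssim \ang{v}^{2(\gamma+2s)}+\ang{v_*}^{2(\gamma+2s)}$ produces two pieces. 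The first is bounded using $1/(\delta\ang{v_*}^{\gamma+2s}+\eta^{-1}) \le \eta$ by $\eta \ang{v}^{2(\gamma+2s)}\int M_*^2 \lesssim \eta\ang{v}^{2(\gamma+2s)}$; the second is bounded by $\delta^{-1}\int M_*^2\ang{v_*}^{\gamma+2s}\lesssim \delta^{-1}$. The sum $\eta\ang{v}^{2(\gamma+2s)}+\delta^{-1}$ is in turn dominated, using $\ang{v}^{\gamma+2s}\ge 1$, by the product $\ang{v}^{\gamma+2s}(\eta\ang{v}^{\gamma+2s}+\delta^{-1})$. A final Cauchy-Schwarz in $v$ then distributes this precisely as weight $(\eta\ang{v}^{\gamma+2s}+\delta^{-1})$ on $|h|^2$ and weight $\ang{v}^{\gamma+2s}$ on $|f|^2$, yielding $\nsm h\nsm_{\eta,\delta}\nsm f\nsm_{L^2_{\gamma+2s}}$.

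The principal subtlety will be keeping the implicit constants in \eqref{tminusgh} and \eqref{tminusgf} uniform in $\eta,\delta$. A direct application of the $(\eta,\delta)$-inequality \eqref{angineq1} followed by Cauchy-Schwarz naively leaves residual positive powers of $\delta$ or $\eta^{-1}$ coming out of the Gaussian $v_*$-integration. The scheme above sidesteps this by splitting $\ang{v-v_*}^{2(\gamma+2s)}$ \emph{additively} and by deferring the partitioning of the $v$-weight between $h$ and $f$ until the very last Cauchy-Schwarz, at which point the weight factors exactly into $(\eta\ang{v}^{\gamma+2s}+\delta^{-1})\cdot \ang{v}^{\gamma+2s}$ — the desired combination for $\nsm h\nsm_{\eta,\delta}$ and $\nsm f\nsm_{L^2_{\gamma+2s}}$.
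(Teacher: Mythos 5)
Your proposal is correct and follows essentially the same route as the paper's proof: the $\sigma$-integral bound \eqref{bjEST}, two applications of Cauchy-Schwarz that place the $(\delta,\eta)$- and $(\eta,\delta)$-weights on $g$ and $h$ while the Maxwellian absorbs all $v_*$-weights, and the $h \leftrightarrow f$ symmetry of $T^j_-$ for \eqref{tminusgf}. The only (harmless) difference is bookkeeping: the paper distributes the weights by taking a geometric mean of the $(\eta,\delta)$-inequality \eqref{angineq1} with $M_*\ang{v-v_*}^{\gamma+2s} \lesssim \ang{v}^{\gamma+2s}$ before applying Cauchy-Schwarz, whereas you use a weighted Cauchy-Schwarz in $v_*$ together with an additive splitting of $\ang{v-v_*}^{2(\gamma+2s)}$; both give constants uniform in $\eta,\delta$.
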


\begin{proof}
Given the size estimates for $b(\cos \theta)$ in \eqref{kernelQ} and the support of $\chi_j$, clearly
\begin{equation}
\int_{\sph} d \sigma ~ B_j \lesssim
\Phi(|v-v_*|)
\int_{2^{-j-1} |v - v_*|^{-1}}^{2^{-j} |v - v_*|^{-1}}  d \theta ~ \theta^{-1-2s} 
\lesssim 2^{2sj} \ang{v-v_*}^{\gamma+2s}.
\label{bjEST}
\end{equation}
(Note that this inequality holds true when $\Phi(v)$ equals either $|v|^{\gamma}$ or $\ang{v}^{\gamma}$ because of the assumption $\gamma + 2s \ge 0$).  Thus
\[ \left| T^j_{-}(g,h,f) \right| \lesssim 2^{2sj} \int_{\R^3} dv \int_{\R^3} dv_* M_* \ang{v-v_*}^{\gamma+2s} |g_*| |h f|. \]
Taking a geometric mean of the $(\eta,\delta)$-inequality for $\ang{v-v_*}^{\gamma+2s}$, i.e., \eqref{angineq1}, and the inequality 
$
M_* \ang{v-v_*}^{\gamma+2s}
 \lesssim \ang{v}^{\gamma+2s}
$ 
gives that
\[ M_*^{\frac{1}{2}} \ang{v-v_*}^{\gamma + 2s}  \lesssim \left( \ang{v}^{\gamma + 2s} \left( \eta \ang{v}^{\gamma + 2s} +  \delta^{-1} \right) \left( \delta \ang{v_*}^{\gamma + 2s} +  \eta^{-1} \right) \right)^{\frac{1}{2}}. \]
With Cauchy-Schwartz on the $v_*$ and $v$ integrals, for any $j$ and any $\eta > 0$,
\[ \left| T^j_{-}(g,h,f) \right| \lesssim 2^{2sj} \nsm g\nsm_{\delta,\eta} \nsm h\nsm_{\eta,\delta} \nsm f\nsm_{L^2_{\gamma+2s}}. \]
Since $T_{-}^j(g,h,f)$ is symmetric in $h$ and $f$, \eqref{tminusgh} must imply \eqref{tminusgf}.  As for \eqref{tminussmall}, note that the inequality $M_*^{1/2} \ang{v-v_*}^{\gamma+2s} \lesssim \ang{v}^{\gamma+2s}$ implies \eqref{tminussmall} by exactly the same reasoning used in the first two inequalities.  
\end{proof}

\begin{proposition}
The inequalities below are uniform for any integer $j$ and $\eta,\delta > 0$:
\begin{align}
 \left| T^j_{*}(g,h,f) \right| & \lesssim 2^{2sj} \nsm g\nsm_{\delta,\eta} \nsm h\nsm_{\eta,\delta} \nsm f\nsm_{L^2_{\gamma+2s}} \label{tstargh} \\
  \left| T^j_{*}(g,h,f) \right| & \lesssim 2^{2sj} \nsm g\nsm_{\delta,\eta} \nsm h\nsm_{L^2_{\gamma+2s}} \nsm f\nsm_{\eta,\delta}. \label{tstargf}
\end{align}
Moreover, the following inequality also holds uniformly for any integer $j$:
\begin{equation}
 \left| T^j_{*}(g,h,f) \right|  \lesssim 2^{2sj} \nsm g\nsm_{L^2} \nsm h\nsm_{L^2_{\gamma+2s}} \nsm f\nsm_{L^2_{\gamma+2s}}. \label{tstarsmall}
\end{equation}
\end{proposition}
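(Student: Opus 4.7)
The plan is to mimic the proof of the analogous inequalities for $T^j_-$ in the previous proposition, after first collapsing the inner $d\pi_v$ integral defining $T^j_*$. The crucial observation is that among the factors in the integrand of $T^j_*$, only the kernel
\[ K_j := B_j \cdot \frac{\Phi(v'-v_*)}{\Phi(v-v_*)} \cdot \frac{|v'-v_*|^2}{|v-v_*|^4} \]
depends on $v$; the remaining factors $g_*, M_*, f', h'$ are functions of $v_*$ or $v'$ alone. So my first step is to estimate $\int_{E_{v_*}^{v'}} d\pi_v \, K_j$ as a function of $v'$ and $v_*$, in analogy with the estimate $\int_{\sph} d\sigma \, B_j \lesssim 2^{2sj} \ang{v-v_*}^{\gamma+2s}$ used for $T^j_-$.

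To estimate $\int d\pi_v K_j$, I would parameterize the plane $E_{v_*}^{v'}$ by $w := v - v'$ with $w \perp u$, where $u := v' - v_*$. Then $|v-v_*|^2 = |u|^2 + |w|^2$, $|v-v'| = |w|$, and a short calculation gives $\sin(\theta/2) = |w|/\sqrt{|u|^2+|w|^2}$. Because we work with the symmetrized kernel ($\theta \le \pi/2$), this forces $|w| \le |u|$, so $|v-v_*| \approx |u|$ and $\theta \approx |w|/|u|$. Plugging $\Phi(|u|) \le |u|^\gamma$ together with the bound $b(\cos\theta) \lesssim \theta^{-2-2s}$ coming from \eqref{kernelQ} into $K_j$ yields the pointwise estimate
\[ K_j \lesssim |u|^{\gamma+2s} \, |w|^{-2-2s} \, \chi_j(|w|). \]
Two-dimensional polar integration over $w \in u^\perp$ then gives
\[ \int_{E_{v_*}^{v'}} d\pi_v \, K_j \;\lesssim\; |u|^{\gamma+2s} \int_0^\infty r^{-1-2s} \chi_j(r) \, dr \;\lesssim\; 2^{2sj} \ang{v'-v_*}^{\gamma+2s}, \]
the last step using $\gamma + 2s \ge 0$ to replace $|v'-v_*|^{\gamma+2s}$ by $\ang{v'-v_*}^{\gamma+2s}$.

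With this kernel estimate in hand, the rest of the argument is a direct adaptation of the previous proof, with the roles of $v$ and $v'$ interchanged. Specifically,
\[ |T^j_*(g,h,f)| \;\lesssim\; 2^{2sj} \int dv_* \int dv' \; M_* |g_*| \, \ang{v'-v_*}^{\gamma+2s} \, |h' f'|, \]
and the geometric-mean estimate
\[ M_*^{1/2} \ang{v'-v_*}^{\gamma+2s} \lesssim \ang{v'}^{(\gamma+2s)/2} \bigl(\eta \ang{v'}^{\gamma+2s} + \delta^{-1}\bigr)^{1/2} \bigl(\delta \ang{v_*}^{\gamma+2s} + \eta^{-1}\bigr)^{1/2}, \]
obtained by combining \eqref{angineq1} with $M_* \ang{v'-v_*}^{\gamma+2s} \lesssim \ang{v'}^{\gamma+2s}$, followed by Cauchy--Schwartz separately in $v_*$ and $v'$, delivers \eqref{tstargh}. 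The symmetric bound \eqref{tstargf} follows because $f$ and $h$ are both evaluated at $v'$ and therefore play interchangeable roles in the integrand. Finally, \eqref{tstarsmall} is obtained by applying the simpler inequality $M_*^{1/2} \ang{v'-v_*}^{\gamma+2s} \lesssim \ang{v'}^{\gamma+2s}$ directly (with no $(\eta,\delta)$-splitting), then invoking Cauchy--Schwartz.

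I expect the main obstacle to be the kernel estimate in the second paragraph: correctly tracking how the apparently dangerous weights $|v'-v_*|^2/|v-v_*|^4$ and $b(\cos\theta) \approx \theta^{-2-2s}$ conspire with the 2D area element on $E_{v_*}^{v'}$ and the small support of $\chi_j$ to produce exactly the factor $2^{2sj} \ang{v'-v_*}^{\gamma+2s}$ matching the one used for $T^j_-$. Once the geometric quantities are recast in the $(u,w)$ coordinates and the constraint $|w| \le |u|$ coming from symmetrization is used, the cancellations become transparent and the rest reduces immediately to the argument already established for $T^j_-$.
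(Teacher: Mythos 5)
Your proof is correct and takes essentially the same approach as the paper: the paper likewise reduces matters to the plane integral of the kernel over $E_{v_*}^{v'}$, uses the support restriction $\theta\le\pi/2$ (equivalently $|v-v'|\le|v'-v_*|$) together with \eqref{kernelQ} to bound $b$ by $\left(|v-v'|^2/|v'-v_*|^2\right)^{-1-s}$, arrives at the same bound $2^{2sj}\ang{v'-v_*}^{\gamma+2s}$, and then concludes by the identical geometric-mean/Cauchy--Schwartz argument used for $T^j_{-}$, exploiting the $h$--$f$ symmetry. The only cosmetic difference is your explicit $(u,w)$ parametrization of the plane, which the paper leaves implicit.
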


\begin{proof}
As in the previous Proposition, the key to both inequalities is the symmetry between $h$ and $f$ coupled with two applications of Cauchy-Schwartz.  
The difference is that, this time, the Carleman representation will be used and the main integrals of will be over $v_*$ and $v'$.  In this case, the quantity of interest is
\[ 
\int_{E_{v_*}^{v'}}  d \pi_{v} ~ b \left( \frac{|v'-v_*|^2 - |v - v'|^2}{|v' - v_*|^2 + |v - v'|^2} \right) \frac{|v'-v_*|^2}{|v-v_*|^4} \chi_j(|v-v'|). 
\]
The support condition yields $|v-v'| \approx 2^{-j}$.  Moreover, since $b(\cos \theta)$ vanishes for $\theta\in [\pi/2,\pi]$, we have
$|v' - v_*| \ge |v' - v|$. 
Consequently, the condition \eqref{kernelQ} gives 
\begin{equation}
b \left( \frac{|v'-v_*|^2 - |v - v'|^2}{|v' - v_*|^2 + |v - v'|^2} \right)
\lesssim
\left(\frac{ |v - v'|^2}{|v' - v_*|^2} \right)^{-1-s}.
\label{bESTc}
\end{equation}
Thus, the integral is bounded by a uniform constant times
\[ 
\int_{E_{v_*}^{v'}}  d \pi_{v} ~\frac{|v'-v_*|^{2+2s}}{|v-v'|^{2+2s}} |v'-v_*|^{-2} \chi_j(|v-v'|) \lesssim 2^{2sj} |v'-v_*|^{2s}. 
\]
As a result
\[ \left| T^j_*(g,h,f) \right| \lesssim \int_{\R^3} dv' \int_{\R^3} dv_* M_* |g_* h' f'| 2^{2sj} \ang{v' - v_*}^{\gamma+2s}. \]
This leads directly to \eqref{tstargh} and \eqref{tstargf} in the same way that \eqref{tminusgh} and \eqref{tminusgf} were obtained.  Similarly, \eqref{tstarsmall} follows from $M_*^{1/2} \ang{v_* - v'}^{\gamma+2s} \lesssim \ang{v'}^{\gamma+2s}$. 
\end{proof}

\begin{proposition}
For any integer $j$ and $\delta, \eta > 0$,  we  uniformly have the
 estimate
\begin{equation} 
\left|  T^j_{+}(g,h,f) \right| \lesssim 2^{2sj} \nsm g\nsm_{\delta,\eta} \nsm h\nsm_{\eta,\delta} \nsm f\nsm_{L^2_{\gamma+2s}}. \label{tplusgh}
\end{equation}
Moreover, 
for any $j\ge 0$ we have the uniform estimate
\begin{equation} 
\left|  T^j_{+}(g,h,f) \right| \lesssim 2^{2sj} \nsm g\nsm_{L^2} \nsm h\nsm_{L^2_{\gamma+2s}} \nsm f\nsm_{L^2_{\gamma+2s}}. 
\label{tplussmall}
\end{equation}
\end{proposition}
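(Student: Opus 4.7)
The basic strategy mirrors the $T^j_-$ and $T^j_*$ analyses: apply Cauchy--Schwarz, integrate out $\sigma$ using \eqref{bjEST}, and redistribute the weights via \eqref{angineq1}. The novelty here is that the Maxwellian $M'_*$ is evaluated at the primed variable $v'_*$ rather than at $v_*$, so it does not decay directly in any of the integration variables. I would split
\[ B_j g_* h M'_* f' = \bigl( B_j^{1/2} (M'_*)^{1/2} g_* h \bigr) \bigl( B_j^{1/2} (M'_*)^{1/2} f' \bigr), \]
so that $|T^j_+(g,h,f)|^2 \le I \cdot J$, with
\[ I \eqdef \int B_j M'_* g_*^2 h^2 \, dv\, dv_*\, d\sigma, \qquad J \eqdef \int B_j M'_* (f')^2 \, dv\, dv_*\, d\sigma. \]

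The factor $J$ is common to both inequalities. The pre--post collisional change of variables $(v, v_*, \sigma) \leftrightarrow (v', v'_*, k)$ has unit Jacobian and preserves $|v-v_*|$, $b(\cos\theta)$, and $|v-v'|$, so $B_j$ is invariant; this transforms $J$ into $\int B_j M_* f^2 \, dv\, dv_*\, d\sigma$, after which \eqref{bjEST} together with the standard bound $M_*^{1/2} \ang{v-v_*}^{\gamma+2s} \lesssim \ang{v}^{\gamma+2s}$ yields $J \lesssim 2^{2sj} \nsm f\nsm_{L^2_{\gamma+2s}}^2$.

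For \eqref{tplusgh}, I would simply discard $M'_* \le 1$ in $I$, apply \eqref{bjEST} in $\sigma$, and split $\ang{v-v_*}^{\gamma+2s}$ using the $(\eta,\delta)$-inequality \eqref{angineq1}; Fubini in $v, v_*$ then gives $I \lesssim 2^{2sj} \nsm g\nsm_{\delta,\eta}^2 \nsm h\nsm_{\eta,\delta}^2$. Multiplying by $J$ and taking square roots produces the first bound, uniformly in $j \in \mathbb{Z}$.

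For \eqref{tplussmall}, the hypothesis $j \ge 0$ is essential. It forces $|v-v'| \le 2^{-j} \le 1$ on the support of $\chi_j$, hence
\[ |v'_*|^2 = |v_* + (v-v')|^2 \ge \tfrac{1}{2}|v_*|^2 - C, \]
and therefore the pointwise bound $M'_* \lesssim M_*^{1/2}$. Writing $M_*^{1/2} = M_*^{1/4} \cdot M_*^{1/4}$ and applying $M_*^{1/4} \ang{v-v_*}^{\gamma+2s} \lesssim \ang{v}^{\gamma+2s}$ (valid since $\gamma + 2s \ge 0$) reduces $I$ to $\lesssim 2^{2sj} \int M_*^{1/4} g_*^2 \, dv_* \cdot \int h^2 \ang{v}^{\gamma+2s} \, dv \lesssim 2^{2sj} \nsm g\nsm_{L^2}^2 \nsm h\nsm_{L^2_{\gamma+2s}}^2$, and \eqref{tplussmall} follows. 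The main delicate point throughout is handling the Gaussian decay of $M'_*$ in $v'_*$: the change of variables converts this into usable decay in $v_*$ on the $f$-side, while the pointwise comparison $M'_* \lesssim M_*^{1/2}$ (available precisely because $j \ge 0$) transfers the decay to the $g$-side in the low-regularity estimate.
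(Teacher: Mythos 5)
Your argument is correct, but it follows a different route from the paper. You bound $T^j_+$ by a direct Cauchy--Schwarz splitting $B_j^{1/2}(M_*')^{1/2}g_*h\cdot B_j^{1/2}(M_*')^{1/2}f'$, estimate the $g,h$-factor by discarding (or, for $j\ge 0$, comparing) the Maxwellian and using \eqref{bjEST} together with \eqref{angineq1}, and convert the $f'$-factor into an unprimed integral by the pre--post collisional change of variables, under which $B_j$ is indeed invariant (this is the same change of variables the paper itself invokes). The paper instead proves two endpoint estimates -- a weighted $L^1\times L^1\times L^\infty$ bound using $M_*'\le 1$, and an $L^\infty\times L^\infty\times L^1$ bound obtained after the pre--post change of variables -- and then interpolates via the Riesz--Thorin theorem with change of weights (Stein--Weiss), treating $g_*h$ as a single function on $\R^3\times\R^3$; for \eqref{tplussmall} it upgrades $M_*'\le 1$ to $M_*'\lesssim M_*^{1-\epsilon}$ exactly as you do with $M_*'\lesssim M_*^{1/2}$, exploiting $|v_*-v_*'|=|v-v'|\le 1$ for $j\ge 0$. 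Your approach is more elementary and self-contained (no two-weight interpolation theorem is needed), and it parallels the treatment already given for $T^j_-$ and $T^j_*$; what the paper's interpolation template buys is that it carries over essentially verbatim to the companion estimate \eqref{tplusgf}, where the Carleman representation is used and the weight transfer is performed in the primed variables, so the same $L^1$--$L^\infty$ scheme can be reused there. One small point worth stating explicitly in your write-up is that $J$ is finite for Schwartz data and that \eqref{bjEST} (hence the bound on $J$) is uniform over all integers $j$, which is what makes \eqref{tplusgh} valid for $j<0$ as well.
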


\begin{proof}
The proof of \eqref{tplusgh} follows by interpolation.  First, we will assume that $f \in L^\infty$.  Noting that $M_*' \leq 1$, the integral of $B_j$ over $\sph$ may be estimated with \eqref{bjEST} to give that
\[ \left| T^j_{+}(g,h,f) \right| \lesssim 2^{2sj} \nsm f\nsm_{L^\infty} \int_{\R^3} dv \int_{\R^3} dv_* |g_*| |h| \ang{v-v_*}^{\gamma+2s}. \]
Using the $(\eta,\delta)$-inequality for $\ang{v-v_*}^{\gamma+2s}$ (inequality \eqref{angineq1}), it follows that
\begin{equation}
\begin{split}
 \left|  T^j_{+}(g,h,f) \right| \lesssim 2^{2sj} \nsm f\nsm_{L^\infty} \left( \int_{\R^3} dv_* |g_*| \right. & \left. \vphantom{\int} ( \delta \ang{v_*}^{\gamma+2s} + \eta^{-1}) \right) \\
 & \times \left( \int_{\R^3} dv |h| (\eta \ang{v}^{\gamma + 2s} + \delta^{-1} )\right).
\end{split} \label{plus1}
\end{equation}
For both $g$ and $h$, this is exactly the $L^1$-type estimate which will give the $\nsm \cdot \nsm_{\delta,\eta}$ or the $\nsm \cdot \nsm_{\eta,\delta}$ norm, respectively, after interpolation with $L^\infty$.

For the next estimate we use $g,h \in L^\infty$. In this case, the pre-post collisional change of variables yields
\[ 
T^j_{+}(g,h,f) = \int_{\R^3} dv \int_{\R^3} dv_* \int_{\sph} d \sigma B_j M_* g_*' f h'. 
\]
Then estimating the integral of $B_j$ as in \eqref{bjEST} grants
\[  \left|  T^j_{+}(g,h,f)  \right| \lesssim 2^{2sj} \nsm g\nsm_{L^\infty} \nsm h\nsm_{L^\infty} \int_{\R^3} dv \int_{\R^3} dv_* M_* \ang{v-v_*}^{\gamma+2s} |f|. \]
 After exploiting the inequality $M_* \ang{v-v_*}^{\gamma+2s} \lesssim M_*^{1/2} \ang{v}^{\gamma+2s}$, the integral over $v_*$ may be bounded above to yield
\begin{equation} \left|   T^j_{+}(g,h,f) \right| \lesssim 2^{2sj} \nsm g\nsm_{L^\infty} \nsm h\nsm_{L^\infty} \int_{\R^3} dv ~ |f| \ang{v}^{\gamma + 2s}. \label{plus2}
\end{equation}
Interpolating \eqref{plus1} and \eqref{plus2} gives \eqref{tplusgh} (the Riesz-Thorin interpolation theorem with weights found in Stein and Weiss \cite{MR0092943} suffices here; since $g$ and $h$ are in the same space, we do not even need a multilinear interpolation theorem since we can instead treat $g_* h$ as a function on $\R^3 \times \R^3$ which is either in a weighted $L^1$ space or in $L^\infty$).

Regarding \eqref{tplussmall}, the proof is essentially unchanged.  The only difference is, when $f$ is taken in $L^\infty$ and $g,h$ are taken in $L^1$, one may improve the inequality $M_*' \leq 1$ to $M_*' \lesssim M_*^{1-\epsilon}$  for any fixed $ \epsilon > 0$
 because $|v_* - v_*'|=|v-v'|  \leq 1$ when $j \ge 0$.  Setting $\eta = \delta = 1$ gives the result, since any weight $\ang{v_*}^\beta M_*^{1-\epsilon}$ is bounded.
\end{proof}

\begin{proposition}
The following inequality holds uniformly in $j$ and $\delta,\eta > 0$:
\begin{equation}
\left|  T^j_{+}(g,h,f) \right| \lesssim 2^{2sj} \nsm g\nsm_{\delta,\eta} \nsm h\nsm_{L^2_{\gamma+2s}} \nsm f\nsm_{\eta,\delta}. \label{tplusgf}
\end{equation}
\end{proposition}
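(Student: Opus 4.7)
The plan is to prove \eqref{tplusgf} by an interpolation argument analogous to the one used for \eqref{tplusgh}, now interpolating between the extreme $h\in L^\infty$ and the extreme $g,f\in L^\infty$. Once both extremes are established with factor $2^{2sj}$, the weighted Riesz-Thorin theorem (as in Stein and Weiss \cite{MR0092943}) yields the desired inequality exactly as before.

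For the first extreme $h\in L^\infty$, I use the Carleman-type representation
\[
T^j_+(g,h,f)=4\int dv'\int dv_*\int_{E_{v_*}^{v'}}d\pi_v\,\frac{B_j}{|v'-v_*||v-v_*|}\,g_*f'M_*'\,h,
\]
which places $h$ at the two-dimensional internal variable $v$ and $f'$ at the three-dimensional external variable $v'$. Bounding $|h|\leq\nsm h\nsm_{L^\infty}$ and $M_*'\le 1$ and pulling $g_*$, $f'$ outside the $d\pi_v$ integral, the task reduces to proving the kernel estimate
\[
\int_{E_{v_*}^{v'}}d\pi_v\,\frac{B_j}{|v'-v_*||v-v_*|}\lesssim 2^{2sj}\langle v'-v_*\rangle^{\gamma+2s}.
\]
This follows from the bound $b(\cos\theta)\lesssim(|v-v'|/|v'-v_*|)^{-2-2s}$ coming from \eqref{bESTc} and the identity $|v-v_*|^2=|v-v'|^2+|v'-v_*|^2$ on $E_{v_*}^{v'}$, by splitting into the regimes $|v'-v_*|\gtrsim 2^{-j}$ and $|v'-v_*|\lesssim 2^{-j}$; the hypothesis $\gamma+2s\ge 0$ is essential to match the bound in the latter. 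Applying the $(\eta,\delta)$-inequality \eqref{angineq1} to $\langle v'-v_*\rangle^{\gamma+2s}$ then gives the analog of \eqref{plus1}, with $g$ and $f$ in weighted $L^1$.

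For the second extreme $g,f\in L^\infty$, I first apply the pre-post collisional change of variables to rewrite $T^j_+(g,h,f)=\int B_j\,g_*'\,h'\,M_*\,f$. Bounding $|g_*'|,|f|$ by the corresponding $L^\infty$ norms leaves $\int B_j M_*|h'|$. The change of variables $v\mapsto v'$ at fixed $v_*,\sigma$ has bounded Jacobian $8/(1+\cos\theta)$ on the support $\cos\theta\ge 0$; in the new coordinates $|v-v_*|=|v'-v_*|/\cos(\theta/2)\approx|v'-v_*|$ and $|v-v'|=|v'-v_*|\tan(\theta/2)$, so integrating over $\sigma$ yields $\int d\sigma\,B_j\lesssim 2^{2sj}|v'-v_*|^{\gamma+2s}$. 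Combining with the standard bound $M_*|v'-v_*|^{\gamma+2s}\lesssim M_*^{1/2}\langle v'\rangle^{\gamma+2s}$ and integrating over $v_*$ produces the analog of \eqref{plus2}:
\[
|T^j_+(g,h,f)|\lesssim 2^{2sj}\nsm g\nsm_{L^\infty}\nsm f\nsm_{L^\infty}\int dv\,|h(v)|\langle v\rangle^{\gamma+2s}.
\]

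Interpolation between the two extremes---treating $g_*\cdot f'$ as a single function on $\R^3\times\R^3$ belonging either to a weighted $L^1$ or to $L^\infty$, and $h$ in the analogous way, just as was done in the proof of \eqref{tplusgh}---then yields \eqref{tplusgf}. The main technical obstacle is the kernel estimate on the 2-plane $E_{v_*}^{v'}$, where uniform control across both regimes of $|v'-v_*|$ relative to $2^{-j}$ is exactly where the assumption $\gamma+2s\ge 0$ enters; once this estimate is in hand, the rest of the argument parallels the earlier proof verbatim with the roles of $h$ and $f$ interchanged.
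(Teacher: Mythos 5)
Your argument is correct and takes essentially the same route as the paper's proof: the Carleman representation with the kernel bound on the plane $E_{v_*}^{v'}$ for the $h\in L^\infty$ endpoint, the pre-post collisional change of variables for the $g,f\in L^\infty$ endpoint, and weighted Riesz--Thorin interpolation exactly as for \eqref{tplusgh}. Your explicit $v\mapsto v'$ change of variables (bounded Jacobian $8/(1+\cos\theta)$ on the support, angle comparable to the fixed pole $(v'-v_*)/|v'-v_*|$) merely spells out the step the paper compresses into ``in a similar manner, after a pre-post change of variables,'' using facts already established in the analysis of \eqref{covterm}.
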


\begin{proof}
The proof proceeds along the same lines as above, the difference is that this time we use the Carleman representation for $T^j_{+}$.  In this case, as in \eqref{bESTc}, we have
\begin{gather*}
 B_j\left(v-v_*, \frac{2v'-v-v_*}{|2v'-v-v_*|} \right)  
 \lesssim  \Phi(v-v_*) \chi_j(|v-v'|) \frac{|v-v_*|^{2+2s}}{|v-v'|^{2+2s}}
\\ 
 \lesssim \Phi(v'-v_*) \chi_j(|v-v'|) \frac{|v-v_*|^{2+2s}}{|v-v'|^{2+2s}}. 
\end{gather*}
The inequalities above  hold because $|v-v_*|^2 \approx |v'-v_*|^2$ and as in the analysis of \eqref{bESTc} we know that
 $|v'-v_*|^2 \ge |v-v'|^2$.  Thus
\begin{align*}
 \int_{E_{v_*}^{v'}} d \pi_{v} \frac{{ B}_j}{|v-v_*| | v'-v_*|} & \lesssim \Phi(v'-v_*) |v'-v_*|^{2s} \int_{E_{v_*}^{v'}} d  \pi_{v} ~ \frac{\chi_j(|v-v'|)}{ |v-v'|^{2+2s}} \\
& \lesssim 2^{2sj} \ang{v'-v_*}^{\gamma+2s}.
\end{align*}
As in the previous cases, this leads to the inequality
 \begin{align*}
\left|  T^j_{+}(g,h,f) \right| & \lesssim 2^{2sj} \nsm h\nsm_{L^\infty} \left( \int_{\R^3} dv_* |g_*| ( \delta \ang{v_*}^{\gamma+2s} + \eta^{-1}) \right) \\
& \hspace{100pt} \times \ \left( \int_{\R^3} dv' |f'| (\eta \ang{v'}^{\gamma + 2s} + \delta^{-1} )\right).
\end{align*}
In a similar manner, after a pre-post change of variables, we obtain
$$
\left|  T^j_{+}(g,h,f) \right|  \lesssim 2^{2sj} \nsm g\nsm_{L^\infty} \nsm f\nsm_{L^\infty} \int_{\R^3} dv' ~ |h'| \ang{v'}^{\gamma + 2s}.
$$
We finish the proof of \eqref{tplusgf} by interpolation in exactly the same manner that the corresponding inequality \eqref{tplusgh} was obtained in the previous proposition. 
\end{proof}

\subsection{Cancellations}
In this section, we seek to establish estimates for the differences $T^k_+  - T^k_{-}$ and $T^k_+ - T^k_*$. We wish the estimates to have good dependence on $k$ (in particular, we would like the norm to be a negative power of $2^k$), but this improved norm will be paid for by assuming differentiability of one of the functions $h$ or $f$.  The key obstacle to overcome in making these estimates is that the magnitude of the gradients of $h$ and $f$ must be measured in some nonisotropic way; this is a point of fundamental importance, as the scaling is imposed upon us by the structure of the ``norm piece'' $\ang{N f,f}$.

The scaling dictated by the problem is that of the paraboloid: namely, that the function $f(v)$ should be thought of as the restriction of some function $F$ of four variables to the paraboloid $(v, \frac{1}{2} |v|^2)$.  Consequently, the correct metric to use in measuring the length of vectors in $\R^3$ will be the metric on the paraboloid in $\R^4$ induced by the four-dimensional Euclidean metric.  To simplify the calculations, we will work directly with the function $F$ rather than $f$ and take its four-dimensional derivatives in the usual Euclidean metric.  This will be sufficient for our purposes since our Littlewood-Paley-type decomposition will give us a natural way to extend the projections $Q_j f$ into four dimensions while preserving the relevant differentiability properties of the three-dimensional restriction to the paraboloid.

To begin, it is necessary to find a suitable formula relating differences of $F$ at nearby points on the paraboloid to the various derivatives of $F$ as a function of four variables.  To this end, fix any two $v,v' \in \R^3$, and consider $\gamma : [0,1] \rightarrow \R^3$ and $\ext{\gamma} : [0,1] \rightarrow \R^4$ given by
\[ 
\gamma(\theta) \eqdef \theta v' + (1-\theta) v,
\quad 
\mbox{ and } 
\quad 
\ext{\gamma}(\theta) \eqdef  \left(\theta v' + (1-\theta)v, \frac{1}{2} \left| \theta v' + (1-\theta) v \right|^2 \right). \]
Note that $\ext{\gamma}$ lies in the paraboloid $\set{(v_1,\ldots,v_4) \in \R^4}{ v_4 = \frac{1}{2} (v_1^2 + \cdots v_3^2)}$, and that $\gamma(0) = v$ and $\gamma(1) = v'$.  Elementary calculations show that
\[ \frac{d \ext{\gamma}}{d \theta} = \left( v' - v, \ang{\gamma(\theta), v' - v} \right), 
\quad 
\mbox{ and }
\quad 
 \frac{d^2 \ext{\gamma}}{d \theta^2} = (0, |v'-v|^2). \]
Now we use the standard trick of writing the difference of $F$ at two different points in terms of an integral of a derivative (in this case the integral is along the path $\gamma$):
\begin{align} 
F\left(v',\frac{|v'|^2}{2} \right) - F\left(v, \frac{|v|^2}{2} \right) & = \int_0^1 d \theta ~ \frac{d}{d \theta} F(\ext{\gamma}(\theta)) \nonumber \\
& = \int_0^1 d \theta  \left( \frac{d \ext{\gamma}}{d \theta} \cdot  (\nabla_4 F) (\ext{\gamma}(\theta)) \right), \label{paraboladiff}
\end{align}
where the dot product on the right-hand side is the usual Euclidean inner-product on $\R^4$ and $\nabla_4$ is the four-dimensional gradient of $F$.  For convenience we define
\[ 
|\nabla_4|^k F(v_1,\ldots,v_4) \eqdef \sup_{|\xi| \leq 1} \left| \left(\xi \cdot \nabla_4 \right)^k F(v_1,\ldots,v_4) \right|, 
\quad
  k=0,1,2,
\]
where $\xi \in \R^4$ and $|\xi|$ is the usual Euclidean length. In particular, note that we have defined $|\nabla_4|^0 F = |F|$.

If $v$ and $v'$ are related by the collision geometry \eqref{sigma}, then $\ang{v-v',v'-v_*} = 0$, which yields that 
$$
\ang{\gamma(\theta),v'-v} = \ang{v_*,v'-v} - (1-\theta) |v-v'|^2.
$$  
Thus, whenever $|v-v'| \leq 1$, which is the case of interest, we have
$$
\left| \frac{d \ext{\gamma}}{d \theta} \right| \lesssim |v-v'| \ang{v_*}. 
$$
 Indeed, throughout this section we suppose that $|v-v'| \leq 1$ since this is the situation where our cancellation inequalities will be used.
 In particular, we have the following inequality for differences related by the collisional geometry:
\begin{align}
 \left| F\left(v',\frac{|v'|^2}{2}\right) - F\left(v, \frac{|v|^2}{2}\right)\right| 
 &
  \lesssim 
  \ang{v_*} |v-v'|  \int_0^1  d \theta ~ |\nabla_4| F (\ext{\gamma}(\theta)). \label{paradiff1} 
\end{align}
Furthermore
by subtracting the linear term from both sides of \eqref{paraboladiff} and using the integration trick iteratively on the integrand of the integral already appearing on the right-hand side we obtain that
\begin{equation}
\begin{split}
\left| F\left(v',\frac{|v'|^2}{2} \right) \right. & \left. - ~ F\left(v, \frac{|v|^2}{2}\right) -  \frac{d \ext{\gamma}}{d \theta}(0) \cdot \nabla_4 F(v) \right| 
\\
& \lesssim
\ang{v_*}^2 |v-v'|^2 \int_0^1 d \theta  ~\left[ |\nabla_4| F (\ext{\gamma}(\theta)) + | \nabla_4|^2 F (\ext{\gamma}(\theta)) \right]. 
\end{split}
\label{paradiff2}
\end{equation}
We note that, by symmetry, the same result holds when the roles of $v$ and $v'$ are reversed (which only changes the curve $\ext{\gamma}$ by reversing the parametrization: $\ext{\gamma}(\theta)$ becomes $\ext{\gamma}(1-\theta)$).  We will use these two basic cancellation inequalities to prove the following cancellation estimates for the trilinear form:

\begin{proposition}
Suppose $f$ is a Schwartz function on $\R^3$ given by the restriction of some Schwartz function $F$ 
on $\R^4$ to the paraboloid $(v, \frac{1}{2} |v|^2)$.  For each $k$, let $|\nabla_4|^k f$ be the restriction of $|\nabla_4|^k F$ to the same paraboloid.  Then, for any $j \geq 0$,
\begin{align}
 \left| (T^j_+- T_{-}^j)(g,h,f) \right| \ & \lesssim 2^{(2s-1)j} \nsm g\nsm_{L^2} \nsm h\nsm_{L^2_{\gamma+2s}} \nl \sum_{k=0}^1 |\nabla_4|^k f \nr_{L^2_{\gamma+2s}}. \label{cancelf}
\end{align}
If $s \geq \frac{1}{2}$, then it also holds uniformly for all $j \geq 0$ that
\begin{align}
 \left| (T^j_+- T_{-}^j)(g,h,f) \right| \ & \lesssim 2^{(2s-2)j} \nsm g\nsm_{L^2} \nsm h\nsm_{L^2_{\gamma+2s}} \nl \sum_{k=0}^2  |\nabla_4|^k f \nr_{L^2_{\gamma+2s}}. \label{cancelf2}
\end{align}
\end{proposition}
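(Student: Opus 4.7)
The strategy is to exploit the cancellation in $M_*' f' - M_* f$ via two routes, each yielding a gain of a power of $2^{-j}$ at the cost of a derivative on $f$. Begin with
\[ (T_+^j - T_-^j)(g,h,f) = \int_{\R^3} dv \int_{\R^3} dv_* \int_\sph d\sigma~ B_j~ g_* h\, (M_*' f' - M_* f), \]
and split $M_*' f' - M_* f = M_*(f'-f) + (M_*'-M_*) f'$. The factor $\chi_j$ forces $|v-v'| \approx 2^{-j} \leq 1$ whenever $j \geq 0$, so the pointwise cancellation inequalities \eqref{paradiff1}--\eqref{paradiff2} apply throughout the support of $B_j$.

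For \eqref{cancelf}, the first-order estimate \eqref{paradiff1} gives $|f'-f| \lesssim \ang{v_*}\,|v-v'|\, \int_0^1 |\nabla_4| f(\gamma(\theta))\, d\theta$. The Maxwellian $M_*$ absorbs $\ang{v_*}$, and the extra factor $|v-v'|\approx 2^{-j}$ sharpens \eqref{bjEST} to
\[ \int_\sph B_j\, |v-v'|\, d\sigma \lesssim 2^{(2s-1)j}\, \ang{v-v_*}^{\gamma+2s}. \]
Cauchy-Schwarz in $\theta$ followed by the change of variables $v \mapsto \gamma(\theta)$ (with Jacobian uniformly bounded in $\theta \in [0,1]$ because $|v-v'|\leq 1$) absorbs the pointwise values $|\nabla_4| f(\gamma(\theta))$ into $\nsm |\nabla_4| f\nsm_{L^2_{\gamma+2s}}$, and the remaining $L^2$ bookkeeping in $(v,v_*)$ reproduces the argument used for \eqref{tminussmall}. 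For the second piece, a first-order Taylor expansion of $M$ at $v_*$ in the direction $v_*'-v_* = v-v'$ yields $|M_*'-M_*| \lesssim |v-v'|\, M_*^{1/2}$, and after a pre-post collisional change of variables to move $f'$ to $f$ this contribution is bounded by the same $L^2$ argument.

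For \eqref{cancelf2}, the rate $2^{(2s-1)j}$ is not summable over $j\geq 0$ when $s\geq 1/2$, so a second-order cancellation is required. Apply \eqref{paradiff2} to write
\[ f'-f = \tfrac{d \ext{\gamma}}{d\theta}(0)\cdot \nabla_4 F(v) + R, \qquad |R| \lesssim \ang{v_*}^2 |v-v'|^2 \sum_{k=1}^2 \int_0^1 |\nabla_4|^k f(\gamma(\theta))\, d\theta. \]
The remainder $R$ already carries the factor $|v-v'|^2\approx 2^{-2j}$, which combined with \eqref{bjEST} yields $2^{(2s-2)j}$ directly. For the linear term, the crucial observation is that $B_j$ is invariant under azimuthal rotation of $\sigma$ about the axis $k = (v-v_*)/|v-v_*|$. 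Decomposing $v'-v = \tfrac{|v-v_*|}{2}\bigl((\cos\theta-1) k + \sin\theta~\omega\bigr)$ with $\omega\perp k$, the $\omega$-component integrates to zero over the azimuthal sphere, while the parallel component has size $|v-v_*|\sin^2(\theta/2) = O(|v-v'|^2/|v-v_*|)$. Consequently,
\[ \left| \int_\sph B_j\, \tfrac{d\ext{\gamma}}{d\theta}(0)\, d\sigma \right| \lesssim 2^{(2s-2)j} \ang{v-v_*}^{\gamma+2s} \ang{v_*}, \]
the extra $\ang{v_*}$ arising from the fourth coordinate $\ang{v_*,v'-v} - (1-\theta)|v-v'|^2$ of $\tfrac{d\ext{\gamma}}{d\theta}(0)$ and being absorbed by $M_*$. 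Pairing with $|\nabla_4| f(v)$ and invoking Cauchy-Schwarz produces the desired bound. The $(M_*'-M_*)f'$ contribution is treated identically by a second-order Taylor expansion of $M$ at $v_*$ along $v-v'$, with the same azimuthal-symmetry argument eliminating the first-order-in-$(v-v')$ term.

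\textbf{Main obstacle.} The delicate step is the second-order cancellation: one must verify that the azimuthal average of $B_j$-weighted integration genuinely upgrades the $O(2^{-j})$ size of the linear term $\tfrac{d\ext{\gamma}}{d\theta}(0) \cdot \nabla_4 F(v)$ to $O(2^{-2j})$, using only the rotational invariance of $B_j$ about $(v-v_*)/|v-v_*|$, and that the $|v-v_*|^{-1}$ loss from the projection onto the parallel direction is absorbed by the weight $\ang{v-v_*}^{\gamma+2s}$ without destroying the eventual Cauchy-Schwarz step. A secondary technical nuisance is that $|\nabla_4| F(v)$ appears pointwise (at $v$, not along $\gamma$) in the linear term, so one must use Fubini together with the change of variables $v \mapsto \gamma(\theta)$ applied in the opposite direction to reconcile the two norms $\nsm |\nabla_4|^k f\nsm_{L^2_{\gamma+2s}}$, uniformly in $\theta \in [0,1]$.
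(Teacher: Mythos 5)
Your argument for \eqref{cancelf} is essentially sound and parallels the paper's: the paper factors $M_*'f'-M_*f=M_*M\left((M^{-1}f)'-(M^{-1}f)\right)$ and applies \eqref{paradiff1} to the single extended function $M^{-1}f$, whereas you split off $(M_*'-M_*)f'$ separately; either way one gains $2^{-j}$ and concludes by Cauchy--Schwartz and the change of variables $v\mapsto\theta v'+(1-\theta)v$. (Two small corrections there: the lower bound on the Jacobian comes from the support condition $\ang{k,\sigma}\ge 0$ together with $\theta\in[0,1]$, not from $|v-v'|\le 1$; and after the change of variables the pole moves with $\sigma$, so one must check, as the paper does, that the angle to the new pole $\tilde k=(u-v_*)/|u-v_*|$ is comparable, in order to retain the estimate \eqref{bjEST}.)

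The second-order estimate \eqref{cancelf2}, however, has a genuine gap, located exactly at the point you flag as the main obstacle. After the azimuthal symmetrization the surviving part of $v'-v$ is $\frac{v-v_*}{|v-v_*|}\frac{|v-v'|^2}{|v-v_*|}$, so the correct bound for the angular integral is
\[ \Bigl|\int_{\sph}d\sigma\,B_j\,\frac{d \ext{\gamma}}{d \theta}(0)\Bigr| \ \lesssim\ 2^{(2s-2)j}\,\Phi(|v-v_*|)\,|v-v_*|^{2s-1}\,\ang{v_*}, \]
and the factor $|v-v_*|^{\gamma+2s-1}$ is \emph{not} dominated by $\ang{v-v_*}^{\gamma+2s}$ when $|v-v_*|\le 1$ and $\gamma+2s<1$ (only $\gamma+2s\ge 0$ is assumed, and indeed $\gamma+2s=(p-3)/(p-1)<1$ for the physical kernels). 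Your displayed inequality is therefore false near $v=v_*$, and the subsequent Cauchy--Schwartz does not go through as written. The paper treats exactly this singularity by splitting into $|v-v_*|\ge 1$ (where $|v-v_*|^{2s-1}\le\ang{v-v_*}^{2s-1}$) and $|v-v_*|\le 1$, where one applies Cauchy--Schwartz in $v_*$ against the weight $\Phi(v-v_*)|v-v_*|^{2s-1}\ind_{|v-v_*|\le 1}$, which is square-integrable precisely because $\gamma+2s-1>-\tfrac32$. A second flaw: in your cross term $(M_*'-M_*)f'$ the first-order Taylor contribution is $-\tfrac12\ang{v_*,v-v'}M_*\,f(v')$, and the azimuthal-symmetry argument does \emph{not} eliminate it, because $f(v')$ depends on $\sigma$ and is not constant on the circles over which you average; you would first have to replace $f'$ by $f$ (costing another derivative and another factor $2^{-j}$) before symmetrizing. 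The paper avoids both cross-term issues by applying \eqref{paradiff2} to the single function $M^{-1}f$, so that the linear correction $\frac{d \ext{\gamma}}{d \theta}(0)\cdot\nabla_4(M^{-1}F)(v)$ is evaluated at $v$ and the symmetrization is legitimate.
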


\begin{proof}
Write the difference $M_*' f' - M_* f = M_* M \left( (M^{-1} f)' - (M^{-1} f) \right)$.  If $f$ extends to $\R^4$, then 
$(M^{-1} f) (v_1,\ldots,v_4) = e^{v_4/2} f(v_1,\ldots,v_4)$.  With this extension 
\begin{equation}
\label{nablaD}
|\nabla_4| (M^{-1} f) \lesssim M^{-1} |\nabla_4|^0 f + M^{-1} |\nabla_4| f.
\end{equation}
To prove \eqref{cancelf}, we first observe from \eqref{paradiff1} that for any fixed $\epsilon > 0$ we have
$$
 |(T_+^j-  T_{-}^j)(g,h,f) | 
  \lesssim 2^{-j} \int_0^1 d \theta \int_{\R^3} dv \int_{\R^3} dv_* \int_{\sph} d \sigma B_j M_*^{1-\epsilon} |g_*| |h| \tilde{f}(\theta v' + (1-\theta)v), 
$$
where $\tilde f \eqdef |\nabla_4|^0 f + |\nabla_4|f$.  
 The loss of $\epsilon$ arises from the factor of $\ang{v_*}$ in \eqref{paradiff1}, which also accounts for the factor of $2^{-j}$.  
 Additionally we have used
 the inequality 
 $M_*' M_*^{-1} = M M'^{-1} \lesssim e^{2^{-j}|v_*|}$
 which follows from
 \begin{equation}
 |v'|^2 - |v|^2 = -2 \ang{v', v - v'} - |v-v'|^2 = - 2 \ang{v_*,v-v'} - |v-v'|^2.
 \label{anglest}
\end{equation} 
   By Cauchy-Schwartz, it suffices to prove the estimates 
\begin{equation}
 \left( \int_0^1 d \theta \int_{\R^3} dv \int_{\R^3} dv_* \int_{\sph} d \sigma B_j M_*^{1-\epsilon} |g_*|^2 |h|^2 \right)^{\frac{1}{2}} 
 \lesssim 
 2^{sj} \nsm g\nsm_{L^2} \nsm h\nsm_{L^2_{\gamma+2s}}, 
 \label{easyterm}
\end{equation}
and
\begin{equation}
 \left( \int_0^1 d \theta \int_{\R^3} dv \int_{\R^3} dv_* \int_{\sph} d \sigma  B_j M_*^{1-\epsilon} |\tilde f(\theta v' + (1-\theta) v)|^2 \right)^\frac{1}{2}  \lesssim 
 2^{sj} \nsm  \tilde f\nsm_{L^2_{\gamma+2s}}. \label{covterm}
\end{equation}
The former inequality, \eqref{easyterm}, follows from \eqref{bjEST} as before. 
The latter uniform bound, \eqref{covterm}, 
 follows after the well-known change of variables $u = \theta v' + (1-\theta)v$, which changes  $v$ to $u$.  With \eqref{sigma}, we see  (with $\delta_{ij}$  the usual Kronecker delta) that 
$$
\frac{d u_i}{ dv_j} = (1-\theta) \delta_{ij} + \theta \frac{d v'_i}{ dv_j}
 = \left(1-\frac{\theta}{2}\right)  \delta_{ij} + \frac{\theta}{2} k_{j} \sigma_{i},
$$
with the unit vector $k = (v-v_*)/|v-v_*|$.  Thus the Jacobian is 
$$
\left| \frac{d u_i}{ dv_j}  \right| 
= 
\left(1-\frac{\theta}{2}\right)^2\left\{
\left(1-\frac{\theta}{2}\right) + 
\frac{\theta}{2} \ang{k, \sigma}
\right\}.
$$
Since  $b(\ang{k, \sigma}) = 0$ when $\ang{k ,\sigma} \leq 0$ from \eqref{kernelQ}, and  $\theta \in [0,1]$, 
it follows that 
the Jacobian of this change is bounded from below on the support of the integral \eqref{covterm}. 
But after this change of variable the old pole $k=(v-v_*)/|v-v_*|$ moves with the angle $\sigma$.  However it is easy to check that, when one takes $\tilde k = (u-v_*)/|u-v_*|$, 
$
1- \ang{ k,\sigma } \approx  1 - \left< \right. \! \tilde{k}, \sigma \! \left. \right>, 
$
meaning that the angle to the pole is comparable to the angle to $\tilde{k}$ (which does not vary with $\sigma$).
Thus the estimate analogous to \eqref{bjEST} will continue to hold after the change of variables, giving precisely the estimate in \eqref{covterm}.

The proof of the inequality \eqref{cancelf2} proceeds in exactly the same fashion, using \eqref{paradiff2} instead of \eqref{paradiff1}.  In this case, similar to \eqref{nablaD}, pointwise everywhere in $\R^3$ we have
$|\nabla_4|^2 (M^{-1} f) \lesssim M^{-1} |\nabla_4|^0 f + M^{-1} |\nabla_4| f + M^{-1} |\nabla_4|^2 f$. 
By subtracting off 
$$
\frac{d \ext{\gamma}}{d \theta}(0) \cdot \nabla_4 F(v),
$$
we easily obtain
\eqref{cancelf2}, at the price of still needing to estimate this term alone.
Here the extension of $M^{-1} f$ is once again inserted in the place of $F$ in \eqref{paradiff2}.  

Notice that $\frac{d \ext{\gamma}}{d \theta}(0)$
is linear in $v'-v$ and has no other dependence on $v'$; after multiplying it by $B_j$ and integrating with respect to $\sigma$, the symmetry of $B_j$ with respect to $\sigma$ around the direction $\frac{v-v_*}{|v-v_*|}$ forces all components of this integral to vanish except the component in the symmetry direction; in other words, one may replace $v-v'$ with $\frac{v-v_*}{|v-v_*|} \ang{v-v', \frac{v-v_*}{|v-v_*|}}$ in the expression for $\frac{d \ext{\gamma}}{d \theta}(0)$.  Since $\ang{v-v',v'-v_*} = 0$, the vector further reduces to $\frac{v-v_*}{|v-v_*|}\frac{|v-v'|^2}{|v-v_*|}$.  Simply observing 
\[ \left| \frac{v-v_*}{|v-v_*|}\frac{|v-v'|^2}{|v-v_*|} \right| \leq 2^{-2j} |v-v_*|^{-1}, \]
allows one to employ the same methods as above (easier in this case) to estimate this term for the projection of $\frac{d \ext{\gamma}}{d \theta}(0)$ onto the first three of the four coordinate directions.  To be precise, one must bound the following integral
\begin{equation} 
2^{-2j} \int_{\R^3} dv \int_{\R^3} dv_* \int_{S^2} d \sigma B_j M_*^{1-\epsilon} |g_*| |h| (|\nabla_4|^0 f + |\nabla_4|^1 f) 
|v-v_*|^{-1}.
\label{linear1} 
\end{equation}
Here, once again, we absorb any powers of $\ang{v_*}$  by $M_*^{-\epsilon}$.
The estimation of this integral proceeds exactly as was done for \eqref{tminussmall}, the only difference being the extra factor $|v-v_*|^{-1}$.  In particular, \eqref{linear1} is bounded above by
\[ 2^{(2s-2)j}  \int_{\R^3} dv \int_{\R^3} dv_* \Phi(v-v_*) |v-v_*|^{2s-1} M_*^{1-\epsilon} |g_*| |h| (|\nabla_4|^0 f + |\nabla_4|^1 f). \]
When $|v-v_*| \ge 1$,   $|v-v_*|^{2s-1} \le \ang{v-v_*}^{2s-1}$, and we obtain the upper bound
\[ 
2^{(2s-2)j} |g|_{L^2} |h|_{L^2_{\gamma+2s-1}} \sum_{k=0}^1 | |\nabla_4|^k f|_{L^2_{\gamma+2s-1}}, 
\]
which has a weight on $h$ and $f$ which is even better than desired.  As for the remaining piece where $|v-v_*| \leq 1$, we have the upper bound
\[ 
\int_{\R^3} dv_* M_*^{1-\epsilon} |g_*|  \Phi(v-v_*) |v-v_*|^{2s-1} {\mathbf 1}_{|v-v_*| \leq 1} \lesssim 
|M_*^{1-2 \epsilon} g_*|_{L^2},
\]
uniformly in $v$ by Cauchy-Schwartz as long as $\gamma + 2s - 1 > \frac{-3}{2}$ so that the $L^2$-norm of the weight $\Phi(v-v_*) |v-v_*|^{2s-1} {\mathbf 1}_{|v-v_*| \leq 1}$ in the variable $v_*$ is uniformly bounded above as a function of $v$.
Integrating in $v$ and applying Cauchy-Schwartz to separate $h$ and $f$ gives the desired inequality on this piece as well.

The fourth coordinate direction of $\frac{d \ext{\gamma}}{d \theta}(0)$ is given by $\ang{v,v'-v}$, which
 reduces to 
\[ \left| \ang{v, \frac{v-v_*}{|v-v_*|}\frac{|v-v'|^2}{|v-v_*|}} \right| \lesssim \frac{ |v-v'|^2 }{|v-v_*|} \ang{v_*}. \]
Therefore this term can also be handled in the same way to the previous cases.
\end{proof}

\begin{proposition}
As in the previous proposition, suppose $h$ is a Schwartz function on $\R^3$ which is given by the restriction of some Schwartz function in $\R^4$ to the paraboloid $(v, \frac{1}{2} |v|^2)$ and define $|\nabla_4|^k h$ analogously. For any $j \geq 0$, the inequality
\begin{align}
| T^j_+(g,h,f)   - T^j_*(g,h,f)|  & \lesssim 2^{(2s-1)j} \nsm g\nsm_{L^2} \nsm f\nsm_{L^2_{\gamma+2s}} \nl \sum_{k=0}^1  |\nabla_4|^k h \nr_{L^2_{\gamma+2s}}. \label{cancelh}
\end{align}
 Moreover, if $s \geq \frac{1}{2}$, then 
 uniformly for $j \geq 0$  we have
\begin{align}
| T^j_+(g,h,f)   - T^j_*(g,h,f)|  & \lesssim 2^{2sj} \nsm g\nsm_{L^2} \nsm f\nsm_{L^2_{\gamma+2s}} \nl \sum_{k=0}^2 2^{-kj - \epsilon_k j} |\nabla_4|^k h \nr_{L^2_{\gamma+2s}}, \label{cancelh2}
\end{align}
where $\epsilon_k \geq 0$ and $2s - k - \epsilon_k < 0$ for $k = 0,1,2$.
\end{proposition}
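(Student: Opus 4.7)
\smallskip

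The plan is to mirror the argument of the previous proposition, but use the dual formulation of the trilinear form so that the cancellation falls on $h$ rather than $f$. Combining the $\sigma$-representation of $T^j_+$ with the dual identity from the Appendix (applied with $B$ replaced by $B_j$) yields
\[
T^j_+(g,h,f) - T^j_*(g,h,f) = \int_{\R^3} dv \int_{\R^3} dv_* \int_{\sph} d\sigma\, B_j\, g_* f'\, \bigl[M_*'\, h - M_*\, h'\, w\bigr],
\]
where $w \eqdef |v'-v_*|^3 \Phi(v'-v_*)/(|v-v_*|^3 \Phi(v-v_*))$. Using the collisional identity $MM_* = M' M_*'$ (energy conservation) I would split the bracket as
\[
M_*' h - M_* w h' = M_* M' \bigl[(M^{-1}h) - (M^{-1}h)'\bigr] + M_*\,(1-w)\,h'.
\]
For the first summand, extend $M^{-1}h$ to $\R^4$ by $(M^{-1}h)(v_1,\dots,v_4) \eqdef e^{v_4/2} h(v_1,\dots,v_4)$ and apply the paraboloid path inequality \eqref{paradiff1} to the extension. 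Combined with the pointwise bound $|\nabla_4|(M^{-1}h) \lesssim M^{-1}(|h| + |\nabla_4|h)$, and the elementary estimate $M'/M(\ext\gamma(\theta)) \lesssim e^{C 2^{-j}|v_*|}$ coming from \eqref{anglest} (which is absorbed by $M_*^{\epsilon}$), this piece reduces to exactly the integrals \eqref{easyterm}--\eqref{covterm} controlled by Cauchy--Schwartz and the change of variables $u = \theta v' + (1-\theta) v$, producing the claimed gain $2^{-j} \cdot 2^{2sj}$ times $\|g\|_{L^2}\|f\|_{L^2_{\gamma+2s}} \sum_{k=0}^{1}\||\nabla_4|^k h\|_{L^2_{\gamma+2s}}$.

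For the second summand, which has no difference structure, I would use the right-triangle identity $|v-v_*|^2 = |v-v'|^2 + |v'-v_*|^2$ (which follows from $\ang{v-v',v'-v_*}=0$) to write $w = \bigl(1 - (|v-v'|/|v-v_*|)^2\bigr)^{(3+\gamma)/2}$. Since $|v-v'|\le|v-v_*|$ automatically and $|v-v'|\approx 2^{-j}$, a Taylor expansion gives $|1-w| \lesssim \min\{1,\, 2^{-2j}|v-v_*|^{-2}\}$. Inserting this together with \eqref{bjEST}, performing the pre-post change of variables to move $h'\to h$ and $f'\to f$, and applying Cauchy--Schwartz with the $(\eta,\delta)$-inequality produces a bound of order $2^{(2s-2)j}\|g\|_{L^2}\|f\|_{L^2_{\gamma+2s}}\|h\|_{L^2_{\gamma+2s}}$, which is absorbed into the $k=0$ term of \eqref{cancelh}.

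For the refined estimate \eqref{cancelh2} under $s\ge \tfrac12$, I would apply the second-order path formula \eqref{paradiff2} to $M^{-1}h$ in place of \eqref{paradiff1}. The constant term cancels by construction, and the linear term $\tfrac{d\ext\gamma}{d\theta}(0)\cdot \nabla_4(M^{-1}h)(v)$ is handled exactly as in the previous proposition: the symmetry of $B_j$ in $\sigma$ around $(v-v_*)/|v-v_*|$ forces the $\sigma$-integral to project $v'-v$ onto that axis, reducing it to $|v-v'|^2/|v-v_*|$ (using $\ang{v-v',v'-v_*}=0$), which supplies the extra factor $2^{-j}|v-v_*|^{-1}$. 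The remaining integral over $v,v_*$ is split at $|v-v_*|=1$, with the inner region controlled by $\gamma+2s-1>-\tfrac{3}{2}$. The scales $\epsilon_k$ in \eqref{cancelh2} arise from the slightly different bookkeeping between the $|v-v_*|^{-1}$ and $|v-v_*|^{-2}$ singularities, and a small interpolation against the trivial bound \eqref{tplussmall}+\eqref{tstarsmall} ensures each power satisfies $2s-k-\epsilon_k<0$.

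The main obstacle is the $(1-w)$ piece: because $w$ encodes the mismatch between the gain-term Jacobian in the two Carleman representations, its deviation from $1$ is only quadratic in $|v-v'|/|v-v_*|$ and provides no intrinsic cancellation of its own, so the full decay must be extracted from a delicate interplay between the factor $|v-v_*|^{-2}$ and the weight $\Phi(v-v_*)|v-v_*|^{2s}$ from the $\sigma$-integral; the assumption $\gamma > -\min\{2s,3/2\}$ is precisely what keeps the resulting integral convergent near the diagonal.
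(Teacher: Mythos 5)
Your first-order estimate \eqref{cancelh} is essentially on the right track, but two things need repair even there. The splitting identity is wrong as written: since $M'M_*'=MM_*$, one has $M_*'h-M_*wh'=\frac{M_*}{M'}\bigl[(Mh)-(Mh)'w\bigr]$, not $M_*M'\bigl[(M^{-1}h)-(M^{-1}h)'\bigr]+M_*(1-w)h'$ (your right-hand side equals $\frac{M_*M'}{M}h-M_*wh'$, which differs from the left-hand side unless $M=M'$). Once corrected, your scheme — difference of $Mh$ along the paraboloid path plus a $(1-w)$ remainder — is close in spirit to the paper, which instead differences the full product $(Mh)\,\Phi(|\cdot-v_*|)|\cdot-v_*|^3$ in a Carleman representation so that derivatives of the kinetic weight produce the $|v-v_*|^{-k}$ factors. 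Also, your claimed $2^{(2s-2)j}$ rate for the $(1-w)$ piece requires square-integrability of $|v-v_*|^{\gamma+2s-2}$ near the diagonal, i.e.\ $\gamma+2s>\frac12$, which is not assumed; you must interpolate the bound $|1-w|\lesssim\min\{1,2^{-2j}|v-v_*|^{-2}\}$ down to a power $\rho$ with $\gamma+2s-\rho>-\frac32$ (for \eqref{cancelh}, $\rho=1$ suffices and reproduces the paper's term of size $2^{(2s-1)j}$).

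The genuine gap is in \eqref{cancelh2}, in your treatment of the linear correction from \eqref{paradiff2}. You propose to handle $\frac{d\ext{\gamma}}{d\theta}(0)\cdot\nabla_4(\cdot)(v)$ ``exactly as in the previous proposition'' by the symmetry of $B_j$ in $\sigma$ around $(v-v_*)/|v-v_*|$. That symmetrization was legitimate before only because every other factor in the integrand ($g_*$, $h$, $M_*$, $\nabla_4F(v)$) was independent of $\sigma$; here the integrand contains $f'$, which varies with $\sigma$, so you cannot replace $v'-v$ by its projection onto $v-v_*$, and without that gain the linear term only admits the trivial bound $\lesssim 2^{(2s-1)j}$ — useless precisely in the regime $s\ge\frac12$ that \eqref{cancelh2} addresses (and you cannot recover it by putting derivatives on $f$, which the estimate forbids). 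The paper's proof circumvents this with two moves you are missing: it works in the Carleman representation and expands the Taylor formula at $\theta=1$ (the $v'$ endpoint) rather than $\theta=0$. Then the gradient factor $\nabla_4h'-\frac14e_4h'$ and the weight $\Phi(|v'-v_*|)|v'-v_*|^3$ are constant as $v$ runs over circles about $v'$ in the plane $E_{v_*}^{v'}$ (and no derivative falls on the weight, since $\frac{d}{d\theta}|\gamma(\theta)-v_*|^2=0$ at $\theta=1$ by $\ang{v-v',v'-v_*}=0$); the radial symmetry of $B_j$ on that plane then makes the linear term vanish identically: its in-plane part projects onto $v'-v_*$, which is orthogonal to $v-v'$, and the fourth component $\ang{v',v'-v}$ integrates to zero. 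Without this reorganization, your argument for \eqref{cancelh2} does not close.
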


\begin{proof}
This proof follows in the same pattern that is by now well-established.  The new feature in this case is that the pointwise difference to examine is
\[  
\Delta \eqdef 4 \left\{ M h \Phi(v-v_*) |v-v_*|^3 -  M' h' \Phi(v'-v_*) |v' - v_*|^3 \right\}. 
\]
As before, we study the function of $\theta$ given by
\[ (M h)(\ext{\gamma}(\theta)) \Phi(\gamma(\theta) - v_*) |\gamma(\theta)-v_*|^3, \]
since $\Delta$ is a constant times the difference in the values of this function at $\theta = 0$ and $\theta = 1$ (note that we also employ the extension of $M h$ to $\R^4$ and make the de facto extension of the factor $\Phi(v-v_*) |v-v_*|^3$ to $\R^4$ by assuming the new function is constant in the fourth variable).
In terms of $\Delta$, our operator may be written as
\[ 
(T^j_+ - T^j_*)(g,h,f) = \int_{\R^3} dv' \int_{\R^3} dv_* \int_{E_{v_*}^{v'}} d \pi_v  
\frac{B_j M_* g_* (M')^{-1} f' \Delta}{|v'-v_*| |v-v_*|^4 ~ \Phi(v-v_*)} . 
\]
We use \eqref{paradiff1} and \eqref{paradiff2} again to estimate $\Delta$.  The additional fact required here is that
\[ |\nabla_4|^k (\Phi(v-v_*) |v-v_*|^3) \lesssim |v-v_*|^{-k} (\Phi(v-v_*) |v-v_*|^3), \]
which simply comes from differentiating with respect to $v$ (since the extension is taken to be constant in the fourth direction, the gradient $\nabla_4$ reduces to the usual three-dimensional gradient).
Applying \eqref{paradiff1} gives the estimate
\begin{align*}
 \frac{|\Delta|}{\Phi(v-v_*)|v-v_*|^3}  \lesssim \ang{v_*} & \frac{|v-v'|}{|v-v_*|} \int_0^1 d \theta \left| (Mh)(\ext{\gamma}(\theta))\right| \\
& + \ang{v_*} |v-v'| \int_0^1 d \theta ~ (M |\nabla_4|^0 h + M |\nabla_4|^1 h)(\ext{\gamma}(\theta)).  
\end{align*}
Again, $|v-v'| \lesssim |v-v_*|$ and $|v-v'| \lesssim 2^{-j}$, so as in the previous Proposition, we may  estimate the difference $T_+^j - T_*^j$ by a sum of three terms:
\begin{align*}
 I \eqdef \int_0^1 d \theta \int_{\R^3} dv' \int_{\R^3} dv_* \int_{E_{v_*}^{v'}} & d \pi_{v} ~  \frac{2^{-j}}{|v-v_*|}
   B_j(v-v_*, 2v' - v- v_*) 
   \\
& \hspace{-20pt} \times \frac{ M_*^{1-\epsilon} |g_* f'|}{|v-v_*| |v'-v_*|}  |\nabla_4|^0 h(\theta v' + (1-\theta)v)
\\
 II \eqdef 2^{-j} \int_0^1 d \theta \int_{\R^3} dv' \int_{\R^3} dv_* \int_{E_{v_*}^{v'}} & d \pi_{v} ~ 
B_j(v-v_*, 2v' - v- v_*) 
   \\
& \hspace{-20pt} \times \frac{ M_*^{1-\epsilon} |g_* f'|}{|v-v_*| |v'-v_*|}  |\nabla_4|^0 h(\theta v' + (1-\theta)v)
\\
 III \eqdef 2^{-j} \int_0^1 d \theta \int_{\R^3} dv' \int_{\R^3} dv_* \int_{E_{v_*}^{v'}} & d \pi_{v} ~ 
   B_j(v-v_*, 2v' - v- v_*) 
   \\
& \hspace{-20pt} \times \frac{ M_*^{1-\epsilon} |g_* f'| |\nabla_4| h(\gamma(\theta))}{|v-v_*| |v'-v_*|} .
\end{align*}
In each case, the extra factors of $\ang{v_*}$ and $M (M')^{-1}$ are absorbed into a single factor $M_*^{-\epsilon}$, 
possible because of \eqref{anglest} and the fact that $j \geq 0$. Each of the terms $II$ and $III$ is completely analogous to a corresponding quantity which arose in the previous proposition.
Splitting $M_*^{1-\epsilon} g_* f' h = (M_*^{1-\epsilon/2} g_* f') (M_*^{\epsilon/2} h)$ allows one to employ Cauchy-Schwartz just as was done for \eqref{easyterm} and \eqref{covterm} to separate the estimate into one integral involving only $g$ and $f$ (which may be estimated using exactly the same argument that handles \eqref{tstarsmall}) and one integral involving only $h$.  The integral involving only $h$ is handled by changing from the Carleman representation back to the sigma representation.  Since $\Phi(v' - v_*) \approx \Phi(v-v_*)$, the desired result follows directly from the same argument used to estimate \eqref{covterm}.  As for term $I$, the two relevant inequalities to establish are
\begin{align*}
|I| & \lesssim 2^{(2s-1)j} |g|_{L^2} |f \ang{v}^{\gamma+2s}|_{L^1} |h|_{L^\infty} \\
|I| & \lesssim 2^{(2s-1)j} |g|_{L^2} |f|_{L^\infty} |h \ang{v}^{\gamma+2s}|_{L^1}.
\end{align*}
The first is completely analogous to the extra estimate in the previous proposition to handle $s > \frac{1}{2}$ and follows as long as $\gamma + 2s - 1 > \frac{-3}{2}$, just as before.  The second may be transformed into exactly the same form after reverting from the Carleman representation to the sigma representation and using the same change of variables from the previous proposition as well.  Finally, these two results are interpolated to give the desired $L^2$ inequality.

Regarding the proof of \eqref{cancelh2}, once again, we split the estimate into two parts: one involving the linear correction term and one involving the integral of the second derivative.  This latter term may be estimated by a sum of expressions of the form 
\begin{align*}
\int_0^1 d \theta \int_{\R^3} dv' \int_{\R^3} dv_* \int_{E_{v_*}^{v'}} & d \pi_{v} ~  \frac{|v-v'|^2}{|v-v_*|^k}
   B_j(v-v_*, 2v' - v- v_*) 
   \\
&  \times \frac{M_*^{1-\epsilon} |g_* f'|}{|v-v_*| |v'-v_*|}  |\nabla_4|^\ell h(\theta v' + (1-\theta)v),
\end{align*}
where $k + \ell \leq 2$. Now
\[ \frac{|v-v'|^2}{|v-v_*|^k} \lesssim 2^{-j \ell} 2^{-j(2-\ell-k)} \frac{|v-v'|^{k}}{|v-v_*|^k} \lesssim  2^{-j \ell} 2^{-j(2-\ell-k)} \frac{2^{-j \rho} }{|v-v_*|^\rho},\]
for any $\rho \in [0,k]$.  Choose $\rho$ so that $\gamma + 2s - \rho > - \frac{3}{2}$ and $2s - 2 + k - \rho < 0$; note that this is always possible when $\gamma > - \frac{3}{2}$.  
Using this estimate and breaking into regions where $|v-v_*| \leq 1$ and $|v-v_*| \geq 1$ allows one to estimate the piece of $T_+^j - T_*^j$ governed by the right-hand side of \eqref{paradiff2} by a sum
\[ 
\sum_{\ell=0}^2 2^{2sj} |g|_{L^2} |f|_{L^2_{\gamma+2s}} 2^{-\ell j - \epsilon_\ell j} ||\nabla_4|^\ell h|_{L^2_{\gamma+2s}}, 
\]
where $\epsilon_{\ell}$ is some positive number (namely, $2 - \ell - k + \rho$) such that $2s - \ell - \epsilon_l < 0$.  We remark that improvements can be made to this estimate by instead taking the $L^\infty_v$ norm of $h$ when $\ell =0$ and $k=2$ and, after a Sobolev embedding, 
estimating isotropic derivatives of our anisotropic Littlewood-Paley decomposition.  With that, this 
estimate can be made to work for all $\gamma + 2s >-1$ as well.

Last, but not least, is the analysis of the linear correction on the left-hand side of \eqref{paradiff2}.  Without loss of generality, we may instead choose 
to use the linear term
\[  
\frac{d \ext{\gamma}}{d \theta}(1) \cdot \nabla_4 F(\ext{\gamma}(1)), 
\]
in \eqref{paradiff2} instead of evaluating at $\theta = 0$.  Again, we exploit the symmetry of the kernel $B_j$ in the plane $E_{v_*}^{v'}$ around the point $v'$.
To simplify matters, the equality
\[  
\frac{d \ext{\gamma}}{d \theta}(1) \cdot \nabla_4 F(\ext{\gamma}(1)) = M' \Phi(v' - v_*) |v'-v_*|^3 \frac{d \ext{\gamma}}{d \theta}(1) \cdot \left( \nabla_4 h' - \frac{1}{4} e_4 h' \right),  
\]
does not involve derivatives of $\Phi(|v'-v_*|) |v'-v_*|^3$ by virtue of the fact that
\[ 
\frac{d}{d \theta} |\gamma(\theta) - v_*|^2 = 2 \ang{v' - v, \gamma(\theta) - v_*} = 0, 
\]
when $\theta = 1$ because $\ang{v-v',v' - v_*} = 0$ (here $e_4$ is the unit vector pointing in the fourth direction in $\R^4$).  We are therefore left to estimate the integral
\begin{align*} \int_{\R^3} dv' \int_{\R^3} dv_* \ & \int_{E_{v_*}^{v'}} d \pi_v  \  
B_j(v-v_*, 2v' - v- v_*) \frac{ M_* g_*}{|v-v_*||v'-v_*|}
\\
& \times  \frac{\Phi(v'-v_*) |v'-v_*|^3}{\Phi(v-v_*) |v - v_*|^3} f' \frac{d \ext{\gamma}}{d \theta}(1) \cdot \left( \nabla_4 h' - \frac{1}{4} e_4 h' \right).
\end{align*}
This integrand still has the property that as $v$ varies on circles of constant distance to $v'$, the entire integrand is constant except for $\frac{d \ext{\gamma}}{d \theta}(1)$.  If we write $\frac{d \ext{\gamma}}{d \theta}(1)$ as a sum of two vectors, one lying in the span of the first three directions and the second pointing in the fourth direction, it follows that we may replace the former vector by its projection onto the direction determined by $v' - v_*$.  But since the original vector points in the direction $v-v'$, the projection vanishes.  In other words, only the projection of $\frac{d \ext{\gamma}}{d \theta}(1)$ pointing in the fourth direction remains.  Its magnitude in this direction is exactly $\ang{v',v'-v}$, the corresponding integral of which over $v$ also vanishes by symmetry.  Thus, the linear term integrates to zero in this case.
\end{proof}

Finally, let us observe that for $j \geq 0$, we have the following uniform inequalities
\begin{align}
| T^j_+(g,h,f)  - T^j_-(g,h,f)|  
&  \lesssim 2^{(2s-2)j} \nsm g\nsm_{\delta,\eta} \nsm h \nsm_{\eta,\delta}  \nl \sum_{k=0}^2 |\nabla_4|^k f \nr_{L^2_{\gamma+2s}} \label{othercptineq} \\
 | T^j_+(g,h,f)  - T^j_*(g,h,f)|  
 &  \lesssim 2^{2sj} \nsm g\nsm_{\delta,\eta} \nsm f\nsm_{\eta,\delta}  \nl \sum_{k=0}^2 2^{-kj - \epsilon_k j} |\nabla_4|^k h \nr_{L^2_{\gamma+2s}}. \label{compactineq} \end{align}
These estimates follow immediately from the work above after taking into account
\eqref{expand}.
{This observation will be necessary to establish favorable estimates for the ``compact piece'' given by $\ang{\Gamma(g,M),g}$ as well as for the term 
$\ang{\Gamma(g,g),M}$. }

\section{The anisotropic Littlewood-Paley decomposition}
\label{sec:aniLP}

In this section we introduce the Littlewood-Paley decomposition.  Rather than the standard Littlewood-Paley decomposition on $\R^3$, we will instead use a decomposition which is implicitly adapted to the induced Laplacian on the paraboloid $(v, \frac{1}{2} |v|^2) \subset \R^4$.  The main reason for doing so is that an analysis of the norm piece 
$\ang{Ng,g}$ from \eqref{normpiece}
shows an inherent anisotropy in the directions of differentiation.  
(The analogous problem for the Landau equation is anisotropic \cite{MR1946444}, and the sharp norm for the non cut-off problem was conjectured to be anisotropic in \cite{MR2322149}.)  Rather than work directly on the paraboloid, though, it turns out to be somewhat simpler to think of the Littlewood-Paley decomposition we use as being a 4-dimensional Euclidean decomposition restricted to the paraboloid. 
 In the process, we will effectively construct a corresponding extension operator for bandlimited functions (i.e., functions with dyadically localized frequency support) on the paraboloid into bandlimited functions on $\R^4$ in the neighborhood of the paraboloid (and the extension operator satisfies favorable Sobolev-type inequalities).  This trick will allow us to use the estimates from the previous section without any serious concern for the deeper geometric aspects of our nonisotropic construction (which contrasts with the approach of Klainerman and Rodnianski \cite{MR2221254}).

\subsection{Associated calculations and definitions}\label{sec:acd}

Throughout the remainder of this section, we will use the variables $v$ and $v'$ to refer to independent points in $\R^3$, meaning that we will not assume in this section that they are related by the collision geometry.  The reason we choose to use these variable names is that they give a hint about where the Littlewood-Paley projections will be later applied in situations which involve the collision geometry explicitly.

First we develop some tools for calculus on this paraboloid.  For $v \in \R^3$, let $\ext{v} \eqdef (v,\frac{1}{2} |v|^2) \in \R^4$.  Perhaps the most useful such tool is the following:  for any $v \in \R^3$, let $\tau_v : \R^3 \rightarrow \R^3$  and $\ext{\tau}_v : \R^3 \rightarrow \R^4$ be given by
\[ \tau_v (u) \eqdef u - (1- \ang{v}^{-1}) \ang{v,u} |v|^{-2} v,
\quad \mbox{ and } 
\quad \ext{\tau}_v u \eqdef (\tau_v u,  \ang{v}^{-1} \ang{v,u}). \]
These mappings should be thought of as sending the hyperplane $v_4 = 0$ 
to the hyperplane tangent to the paraboloid $(v, \frac{1}{2} |v|^2)$ at the point $\ext{v}$.  It's routine to check that $\ang{v, \tau_v u} = \ang{v}^{-1} \ang{v,u}$ and $|\tau_v u|^2 = |u|^2 - \ang{v}^{-2} \ang{v,u}^2$, which implies
$$
|\ext{\tau}_v u|^2
=
|\tau_v u|^2 + \ang{v, \tau_v u}^2 = |u|^2,
$$ 
meaning that $\ext{\tau}_v$ is an isometry from one hyperplane to the other. 
Moreover, it is easy to check that
\[ \ext{v + \tau_v u} = \ext{v} + \ext{\tau}_v u + \frac{1}{2} |u|^2 e_4, \]
where $e_4 \eqdef (0,0,0,1)$.
The last term will be thought of as a perturbation, and is the basis for all the analysis that follows:
\begin{proposition}
Suppose $\varphi$ is any fixed, smooth function supported on the unit ball in $\R^4$.  For any $j \geq 0$, the expansion
\begin{equation} \varphi \left(2^{j} \left( \ext{v + 2^{-j} \tau_v u} - \ext{v} \right) \right) = \varphi( \ext{\tau}_v u ) + 2^{-j-1} \varphi_4 (\ext{\tau}_v u) + E_j(u,v), \label{asympt}
\end{equation}
holds, where $\varphi_4 \eqdef \frac{\partial \varphi}{\partial v_4}$ and $|E_j(u,v)| \lesssim 2^{-2j}$ and is supported on a set $|u| \leq 2$.
Moreover, if $|u| \lesssim 1$, then 
\[ \left| \ang{v + 2^{-j} \tau_v u}^\beta - \ang{v}^\beta - 2^{-j} \beta \ang{v}^{-1} \ang{v,u} \ang{v}^{\beta-2} \right| \lesssim 2^{-2j} \ang{v}^{\beta-2}, \]
uniformly in $v,u$, and $j \geq 0$ for any fixed $\beta \in \R$.
\end{proposition}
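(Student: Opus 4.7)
Both parts are Taylor expansions in the small parameter $2^{-j}$, and both rest on the identity $\ext{v + \tau_v u} = \ext{v} + \ext{\tau}_v u + \tfrac{1}{2}|u|^2 e_4$ established just above. The plan is to substitute $u \mapsto 2^{-j} u$ in this identity, thereby exploiting the fact that the nonlinearity of the embedding $v \mapsto \ext{v}$ is purely quadratic and points only in the $e_4$ direction.

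For the first expansion, this substitution together with the linearity of $\tau_v$ and $\ext{\tau}_v$ in $u$ yields
\[
2^j\bigl(\ext{v + 2^{-j}\tau_v u} - \ext{v}\bigr) = \ext{\tau}_v u + 2^{-j-1}|u|^2 e_4.
\]
Hence the argument of $\varphi$ on the left-hand side differs from $\ext{\tau}_v u$ only by a perturbation of size $O(2^{-j})$ pointing in the single direction $e_4$. A first-order Taylor expansion of $\varphi$ in its fourth variable produces the stated leading term involving $\varphi_4$, with remainder $E_j$ controlled by $(2^{-j-1}|u|^2)^2 \|\varphi_{44}\|_{L^\infty}\lesssim 2^{-2j}$. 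For the support claim I would use the isometry $|\ext{\tau}_v u| = |u|$ to compute $|2^j(\ext{v+2^{-j}\tau_v u} - \ext{v})|^2 = |u|^2 + 2^{-2j-2}|u|^4$; since $\varphi$ is supported in the unit ball in $\R^4$, the left-hand side of the expansion vanishes whenever $|u|>1$, while the explicit terms on the right vanish for $|u|>1$ by the same isometry, confining $E_j$ to the stated support set.

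For the second expansion, I would set $F(\epsilon)\eqdef \ang{v + \epsilon\tau_v u}^\beta$ and Taylor-expand $F$ at $\epsilon = 0$ with Lagrange remainder, evaluated at $\epsilon = 2^{-j}$. A direct computation gives $F(0) = \ang{v}^\beta$ and $F'(0) = \beta \ang{v}^{\beta-2}\ang{v,\tau_v u}$; the identity $\ang{v,\tau_v u} = \ang{v}^{-1}\ang{v,u}$, visible from the defining formula for $\tau_v$, converts $2^{-j}F'(0)$ into the claimed main term. The remainder then requires a uniform bound on
\[
F''(\epsilon) = \beta\ang{v+\epsilon\tau_v u}^{\beta-2}|\tau_v u|^2 + \beta(\beta-2)\ang{v+\epsilon\tau_v u}^{\beta-4}\ang{v+\epsilon\tau_v u,\tau_v u}^2
\]
for $\epsilon \in [0,2^{-j}]$ with $j\geq 0$ and $|u|\lesssim 1$.

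The main technical step — and the only one that is not essentially bookkeeping — is the uniform comparability $\ang{v+\epsilon\tau_v u} \approx \ang{v}$ on this parameter range. I would handle it by splitting into the regimes $|v|\lesssim 1$, where both quantities are uniformly bounded above and below by absolute constants, and $|v|\gg 1$, where $\epsilon|\tau_v u| \leq |u| \lesssim 1 \ll |v|$ and the reverse triangle inequality gives $|v+\epsilon\tau_v u|\approx|v|$. Once the comparability is in hand, Cauchy--Schwarz bounds both summands of $F''(\epsilon)$ by $\ang{v}^{\beta-2}$, and Taylor's theorem delivers $|F(2^{-j}) - F(0) - 2^{-j}F'(0)|\lesssim 2^{-2j}\ang{v}^{\beta-2}$ as claimed. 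No part of the argument is deep; the hypotheses $j\geq 0$ (so $\epsilon \leq 1$) and $|u|\lesssim 1$ are used essentially to keep the perturbation small enough that the base point does not leave the scale $\ang{v}$.
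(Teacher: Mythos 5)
Your proposal is correct and follows essentially the same route as the paper, whose proof is a one-liner invoking the identity $\ext{v+2^{-j}\tau_v u} = \ext{v} + 2^{-j}\ext{\tau}_v u + 2^{-2j-1}|u|^2 e_4$ (together with $\ang{v}^\beta = (1+2(\ext{v})_4)^{\beta/2}$) and Taylor's theorem with remainder, exactly as you do. The only point worth noting is that the exact first-order term is $2^{-j-1}|u|^2\,\varphi_4(\ext{\tau}_v u)$ rather than the literally stated $2^{-j-1}\varphi_4(\ext{\tau}_v u)$; this looseness is in the statement itself (and is harmless, since the later applications only use that the correction is $2^{-j}$ times an odd function of $u$ supported in $|u|\lesssim 1$), so your gloss matches what the paper's own proof yields.
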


\begin{proof}
Note that $\ang{v}^{\beta} = (1 + 2 (\ext{v})_4)^{\beta/2}$.  With this observation and the equality 
$$
\ext{v+2^{-j} \tau_v u} = \ext{v} + 2^{-j} \ext{\tau}_v u + 2^{-2j-1} |u|^2 e_4,
$$ 
both inequalities  immediately follow from Taylor's theorem with remainder.
\end{proof}

These expansions may be utilized to estimate a variety of integrals of the form
\[ 
\int_{\R^3} dv' ~ 2^{3j} \varphi(2^{j} (\ext{v'} - \ext{v})) \ang{v'}^\beta  
= 
\ang{v}^{-1} \! \int_{\R^3} du ~ \varphi(2^{j} (\ext{v + 2^{-j} \tau_v u} - \ext{v})) \ang{v + 2^{-j} \tau_v u}^\beta. 
\]
The right-hand side follows by the change of variables $v' \mapsto v + 2^{-j} \tau_v u$ as suggested by the proposition above.  If it is assumed that $\varphi$ is radial, then $\varphi_4$ is odd, meaning that $\varphi_4(\ext{\tau}_v u)$ is an odd function of $u$ (and, by assumption, $\varphi(\ext{\tau}_v u)$ is an even function of $u$).  Likewise $\ang{v}^\beta$ is (trivially) an even function of $u$, while $\ang{v,u} \ang{v}^{\beta-3}$ is odd.  Thus if both factors in the right-hand side of the above equality are expanded as a function of $u$ by means of the previous proposition, there are a number of terms which automatically cancel (namely, the product of the zeroth-order term for one and the first-order term for the other).  Thus the result is that
\begin{align}
 \left| \int_{\R^3} dv' 2^{3j} \varphi(2^{j} (\ext{v} - \ext{v'})) \ang{v'}^\beta  - \ang{v}^{\beta-1} \int_{\R^3} du  \varphi(\ext{\tau}_v u)  \right| \lesssim 2^{-2j} \ang{v}^{\beta-1}. \label{intest}
\end{align}
Moreover, if $\varphi$ is assumed to be radial, it is also true that $\int du \varphi(\ext{\tau}_v u) $ is a constant 
(since $\ext{\tau}_v$ is an isometry of $\R^3$ with some hyperplane in $\R^4$ passing through the origin).   It will be assumed that $\varphi$ is chosen so that this constant equals $1$.

We now define the following anisotropic Littlewood-Paley projections: 
\begin{align*} 
P_j f(v) & \eqdef \int_{\R^3} f(v') 2^{3j} \varphi(2^j (\ext{v} - \ext{v'})) \ang{v'} dv', 
\quad j\ge 0,
\\
Q_j f(v) & \eqdef P_j f(v) - P_{j-1} f(v), \quad
j\ge 1, 
\quad 
Q_0 \eqdef P_0. 
\end{align*}
Given the calculations above, if $f$ is a Schwartz function, then clearly $P_j f(v) \rightarrow f(v)$ as $j \rightarrow \infty$ by the normalization condition on $\varphi$ and the estimate \eqref{intest}.  That same estimate proves the fundamental fact that
\begin{equation} \left( \int_{\R^3} dv (P_j f(v))^p \ang{v}^\beta \right)^{\frac{1}{p}}  \lesssim \left( \int_{\R^3} dv (f(v))^2 \ang{v}^{\beta} \right)^{\frac{1}{p}}, \label{boundedlp}
\end{equation}
uniformly in $j \geq 0$ for any fixed $\beta \in \mathbb{R}$ and any $p \in [1,\infty)$ (as a consequence of Schur's test for integral operators).

\subsection{Square function estimates related to the norm \eqref{normdef}}
The next step is to establish a favorable inequality relating the associated square functions, i.e., 
$\{\sum_j 2^{2sj} (Q_jf)^2(v)\}^{1/2}$,
to an integral involving a squared difference of the form found in \eqref{normdef}.  To this end, notice that
\begin{align*}
 \frac{1}{2} \int_{\R^3}  \! \! \! dv \! \int_{\R^3}  \! \! \! dv' \! \int_{\R^3} \! \! \! dz  & (f(v) - f(v'))^2 q_j(\ext{z},\ext{v}) q_j(\ext{z}, \ext{v'}) \ang{v} \ang{v'} \ang{z}^\beta  \\ 
 & =   - \int_{\R^3} \! \! dv  (Q_j f (v))^2 \ang{v}^\beta + \int_{\R^3} \! \! dv Q_j (f^2)(v) Q_j(1)(v) \ang{v}^\beta,
\end{align*}
with 
$
q_j(\ext{z},\ext{v})=\left[ 2^{3j} \varphi(2^j(\ext{v} - \ext{z})) - 2^{3j-3} \varphi(2^{j-1}(\ext{v} - \ext{z})) \right]$.  
To see this,  expand the square $(f(v) - f(v'))^2$ and exploit the symmetry of the integral in $v$ and $v'$.
Next, recall that $|Q_j(1)(v)| \lesssim 2^{-2j}$ by virtue of \eqref{intest} and the triangle inequality.  
But 
\begin{align*}
\int_{\R^3} dv |Q_j(1)(v)| |Q_j(f^2)(v)| \ang{v}^{\beta} \lesssim 2^{-2j} \int_{\R^3} dv' (f(v'))^2 \ang{v'}^{\beta},
\end{align*}
where the last inequality follows from the pointwise estimate for $Q_j(1)$ and the boundedness of $Q_j$ on $L^1$ (i.e., the inequality \eqref{boundedlp} for $p=1$).
Consequently, for any $j > 0$, we have 
 the following estimate:
\begin{align*}
 \left| \frac{1}{2} \int_{\R^3}  \! \! \! dv \! \int_{\R^3}  \! \! \! dv' \! \int_{\R^3} \! \! \! dz \right. & (f(v) - f(v'))^2 q_j(\ext{z},\ext{v}) q_j(\ext{z}, \ext{v'}) \ang{v} \ang{v'} \ang{z}^\beta  \\ 
 + & \left.  \int_{\R^3} \! \! dv  (Q_j f (v))^2 \ang{v}^\beta \right| \lesssim 2^{-2j} \int dv |f(v)|^2 \ang{v}^{\beta}.
\end{align*}
This inequality allows us to compare the weighted norm of $Q_j f$ to an integral involving the squared difference $(f(v)-f(v'))^2$, the right-hand side of the inequality above being the lower order error in this comparison.

The next step is to estimate the integral of $\ang{z}^\beta q_j(\ext{v} - \ext{z}) q_j(\ext{v}-\ext{z})$ with respect to $z$.
By the same reasoning that lead to \eqref{intest} (namely, by making the change of variables $z \mapsto v+ 2^{-j} \tau_v w$), it must be the case that the inequality
\[ \int_{\R^3} dz \ang{z}^\beta |q_j(\ext{z} - \ext{v})| \lesssim \ang{v}^{\beta-1}, \] 
holds uniformly for $j \geq 0$ (in fact, this follows directly from \eqref{intest} and the triangle inequality).  Moreover, clearly $|q_j(\ext{z} - \ext{v})| \lesssim 2^{3j}$ as well.  Thus, for any $j \geq 0$, 
\[ \left| \int_{\R^3} dz \ang{z}^\beta q_j(\ext{v} - \ext{z}) q_j(\ext{z} - \ext{v'}) \right| \lesssim 2^{3j} \ang{v}^{\beta-1}, \]
and the integral is supported on some set $| \ext{v} - \ext{v'} | \lesssim 2^{-j}$.
Consequently
\begin{align*}
 \sum_{j=0}^\infty 2^{2sj} & \int_{\R^3} dv (Q_jf(v))^2 \ang{v}^\beta \\ & 
\lesssim \nsm f\nsm_{L^2_\beta}^2 + \int_{\R^3} dv \int_{\R^3} dv' \frac{(f(v) - f(v'))^2}{| \ext{v} - \ext{v'} |^{3+2s}} \chi( \ext{v} - \ext{v'}) \ang{v}^{\beta} \ang{v'}, 
\end{align*}
where $\chi(\ext{v} - \ext{v'})$ is some bounded, nonnegative function supported on a ball in $\R^4$ given by $|\ext{v} - \ext{v'}| \lesssim 1$. Note that, if the scaling function $\varphi$ is suitably rescaled on $\R^4$ (i.e., $\varphi(v)$ is replaced with $\varphi(cv)$ where $c$ is the implicit constant in $|\ext{v} - \ext{v'}| \lesssim 1$), then $\chi$ may be taken to be the characteristic function of the unit ball in $\R^4$.  Furthermore,  $|\ext{v} - \ext{v'}| \lesssim 1$ implies $\ang{v} \approx \ang{v'}$ so the powers of $\ext{v}$ and $\ext{v'}$ may be redistributed at will.  In particular, then, it must hold that the exponentially weighted, squared sum of Littlewood-Paley $Q_j$'s is dominated by the square of our fundamental norm \eqref{normdef}:
\begin{equation*}
 \sum_{j=0}^\infty 2^{2sj}  \int_{\R^3} dv (Q_jf(v))^2 \ang{v}^{\gamma+2s} \lesssim \nsm f\nsm_{N^{s,\gamma}}^2.
\end{equation*}
Likewise, since each $Q_j$ has a natural extension to $\R^4$ (obtained by replacing $\ext{v}$ by an arbitrary 4-vector in the definitions of $P_j$ and $Q_j$), it is natural to ask a similar question about the 4-derivatives $|\nabla_4|^k Q_j$. In this case, it is easy to see that the same estimates must hold (as the derivatives simply fall on $\varphi$), except that a factor of $2^j$ is introduced for every derivative.  In particular, then, we must also have
\begin{equation}
 \sum_{j=0}^\infty 2^{2(s-k)j}  \int_{\R^3} dv (|\nabla_4|^k Q_jf)^2(v) \ang{v}^{\gamma+2s} \lesssim \nsm f\nsm_{N^{s,\gamma}}^2, \qquad k = 0,1,2, \label{lpbynorm}
\end{equation}
uniformly in $j \geq 0$ as well.  These inequalities will be fundamentally important in the next section, where all the estimates so far are combined to yield the stated upper bound of the trilinear form $\ang{\Gamma(g,h),f}$.

\section{Upper bounds for the trilinear form}
\label{sec:upTRI}

In this section, we establish Lemma \ref{NonLinEst} for the nonlinear term $\Gamma$ as well as 
Lemma \ref{sharpLINEAR}
which provide estimates related to the splitting $L = N + K$ of the linear term.

\subsection{The main upper bound inequality}

We'll write 
$$
f = P_0 f + \sum_{j=1}^\infty Q_j f \eqdef \sum_{j=0}^\infty f_j,
$$ 
and likewise for $h$, then expand the nonlinear term: 
\begin{align}
 & \ang{\Gamma(g,h),f}  = \sum_{j,j' = 0}^\infty \ang{\Gamma(g,h_{j'}),f_j} \nonumber \\
& \hspace{30pt} = \sum_{j=0}^\infty \ang{\Gamma(g,h_j),f_j} + \sum_{l=1}^\infty \sum_{j=0}^\infty  \left\{ \ang{\Gamma(g,h_{j+l}),f_j} + \ang{\Gamma(g,h_j),f_{j+l}} \right\}. \label{mainexpand}
\end{align}
Consider the sum over $l$ of the terms $\ang{\Gamma(g,h_{j+l}),f_j}$ for fixed $j$.  We expand $\Gamma$ as a series by introducing the cutoff around the singularity of $b$ in terms of $T^k_+$ and $T^k_{-}$:
\begin{align}
 \sum_{l=1}^\infty \ang{\Gamma(g,h_{j+l}),f_j} 
 & = 
 \sum_{k=-\infty}^\infty  \sum_{l=1}^\infty 
\left\{ T^k_{+}(g,h_{j+l},f_j) - T^k_{-}(g,h_{j+l},f_j) \right\} \nonumber \\
& = 
\sum_{k=-\infty}^0 \left\{ T^k_{+}(g, h - P_j h, f_j) - T^k_{-}(g, h - P_j h, f_j) \right\}  \label{farsing} \\
& \hspace{30pt} 
+ 
\sum_{l=1}^\infty \sum_{k=1}^\infty \left\{ T^k_{+}(g,h_{j+l},f_j) - T^k_{-}(g,h_{j+l},f_j) \right\}. \label{nearsing}
\end{align}
Here we have used the basic telescoping property $h - P_j h =  \sum_{l=1}^\infty h_{j+l}$.
Also, throughout the manipulation, the order of summation may be rearranged with impunity since the estimates we employ below will imply that the sum is absolutely convergent when $g,h,f$ are all Schwartz functions.  Regarding the terms \eqref{farsing}, the inequalities \eqref{tplusgh} and \eqref{tminusgh} dictate that
\[ \sum_{k=-\infty}^0 |T^k_{+}(g, h - P_j h, f_j) - T^k_{-}(g, h - P_j h, f_j)| \lesssim \nsm g\nsm_{\delta, \eta} \nsm h - P_j h\nsm_{\eta,\delta} \nsm f_j\nsm_{L^2_{\gamma+2s}}. \]
Since $\nsm P_j h\nsm_{\eta,\delta} \lesssim \nsm h\nsm_{\eta,\delta}$ (a consequence of \eqref{boundedlp}), one may conclude that
\begin{gather*}
 \sum_{j=0}^\infty \sum_{k=-\infty}^0  
 |T^k_{+}(g, h - P_j h, f_j)  - T^k_{-}(g, h - P_j h, f_j)|  
 \lesssim \nsm g\nsm_{\delta, \eta} \nsm h\nsm_{\eta,\delta} \sum_{j=0}^\infty \nsm f_j\nsm_{L^2_{\gamma+2s}}
 \\
 \lesssim \nsm g\nsm_{\delta, \eta} \nsm h\nsm_{\eta,\delta} \left| \sum_{j=0}^\infty 2^{2sj} \nsm f_j\nsm ^2_{L^2_{\gamma+2s}} \right|^\frac{1}{2}
\lesssim 
 \left(
 \nsm g\nsm_{L^2} \nsm h\nsm_{L^2_{\gamma+2s}}
 + 
 \nsm g\nsm_{L^2_{\gamma+2s}} \nsm h\nsm_{L^2} 
\right) \nsm f\nsm_{N^{s,\gamma}}.
\end{gather*}
This is just Cauchy-Schwartz.
The expansion of the product $\nsm g\nsm_{\delta, \eta} \nsm h\nsm_{\eta,\delta}$ proceeds by \eqref{expand} and 
the favorable comparison of the square-function norm of $f$ to the norm $\nsm  f \nsm_{N^{s,\gamma}}$ is provided by \eqref{lpbynorm}.
As for the terms \eqref{nearsing}, when $k \leq j$ a similar approach holds; namely, \eqref{tplussmall} and \eqref{tminussmall} guarantee that
\[ \sum_{k=1}^{j} \left| T^k_{+}(g,h_{j+l},f_j) - T^k_{-}(g,h_{j+l},f_j) \right| \lesssim 2^{2sj} \nsm g\nsm_{L^2} \nsm h_{j+l}\nsm_{L^2_{\gamma+2s}} \nsm f_j\nsm_{L^2_{\gamma+2s}}, \]
(we have used the trivial facts that $2^{2sk} = 2^{2sj} 2^{2s(k-j)}$ and 
$\sum_{k=1}^{j} 2^{2s(k-j)}\lesssim 1$). 
In particular, this inequality may be summed over $j$; another application of Cauchy-Schwartz gives that
\begin{align*}
 \sum_{j=0}^\infty \sum_{k=1}^{j} & \left| T^k_{+}(g,h_{j+l},f_j) - T^k_{-}(g,h_{j+l},f_j) \right|
 \\ 
 & \lesssim 2^{-sl} \nsm g\nsm_{L^2} \left| \sum_{j=0}^\infty 2^{2s(j+l)} \nsm h_{j+l}\nsm ^2_{L^2_{\gamma+2s}} \right|^{\frac{1}{2}} \left| \sum_{j=0}^\infty 2^{2sj} \nsm f_j\nsm ^2_{L^2_{\gamma+2s}} \right|^{\frac{1}{2}}
 \\
& \lesssim 2^{-2sl} \nsm g\nsm_{L^2} \nsm h\nsm_{N^{s,\gamma}} \nsm f\nsm_{N^{s,\gamma}}.
\end{align*}
This estimate may clearly also be summed over $l \geq 0$.

A completely analogous argument may be used to expand $\Gamma$ for the terms in \eqref{mainexpand} of the form $\ang{\Gamma(g,h_j),f_{j+l}}$ in terms of $T^k_+ - T^k_*$; 
\begin{align}
\sum_{l=1}^\infty \ang{\Gamma(g,h_j),f_{j+l}} & = \sum_{k=-\infty}^\infty \sum_{l=1}^\infty (T^k_+ - T^k_*)(g,h_j,f_{j+l}) \nonumber \\
& = \sum_{k=-\infty}^0 (T^k_+ - T^k_*)(g,h_j,f - P_j f) \label{sum1} \\
& \hspace{30pt} + \sum_{l=1}^\infty \sum_{k=1}^\infty (T^k_+ - T_*^k)(g,h_j,f_{j+l}) \label{sum2}.
\end{align}
In this case the estimates \eqref{tplusgh} and \eqref{tstargh} are used to handle the terms \eqref{sum1} just as the corresponding terms \eqref{farsing} were handled.  Regarding the sum \eqref{sum2}, now \eqref{tplussmall} and \eqref{tstarsmall} are used to estimate the sum analogously to the estimates of \eqref{nearsing}.  The only difference is that the roles of $h$ and $f$ are now reversed.

Recalling the original expansion of $\ang{\Gamma(g,h),f}$ it is clear that the only terms that remain to be considered are the following:
\begin{align} 
\tilde{\Gamma}(g,h,f)  \eqdef 
&  \sum_{l=0}^\infty \sum_{j=0}^\infty \sum_{k=j+1}^\infty \left\{T^k_{+}(g,h_{j+l},f_j) - T^k_{-}(g,h_{j+l},f_j)\right\} 
\label{maincancelf} \\
 & + \sum_{l=1}^\infty \sum_{j=0}^\infty \sum_{k=j+1}^\infty \left\{ T^k_{+}(g,h_{j},f_{j+l}) - T^k_{*}(g,h_{j},f_{j+l}) \right\} . \label{maincancelh}
\end{align}
In other words, we have already established the inequality
\begin{align*}
 \left| \ang{\Gamma(g,h),f} - \tilde{\Gamma}(g,h,f) \right| \lesssim & \ \nsm g\nsm_{L^2} \nsm h\nsm_{N^{s,\gamma}} \nsm f\nsm_{N^{s,\gamma}} \\
 & + \nsm g\nsm_{L^2_{\gamma+2s}} \left[ \nsm h\nsm_{L^2} \nsm f\nsm_{N^{s,\gamma}} + \nsm h\nsm_{N^{s,\gamma}} \nsm f\nsm_{L^2} \right].
\end{align*}
Then the terms on the right-hand side of \eqref{maincancelf} and the terms \eqref{maincancelh} are both treated by the cancellation inequalities.  The terms \eqref{maincancelf}, for example, are handled by \eqref{cancelf} and \eqref{cancelf2} (when $s \geq \frac{1}{2}$).  For any fixed $l,j,k$, we have 
\begin{align*}
 \left| T^k_{+}(g,h_{j+l},f_j) \right. & \left. - T^k_{-}(g,h_{j+l},f_j) \right|
\\ &  \lesssim 2^{2sk} \nsm g\nsm_{L^2} \nsm h_{j+l}\nsm_{L^2_{\gamma+2s}} \nl \sum_{i=0}^2 2^{-ik-\epsilon_ik} |\nabla_4|^i f_j \nr_{L^2_{\gamma+2s}},
\end{align*}
for appropriate nonnegative $\epsilon_i$'s satisfying $2s - i - \epsilon_i < 0$ (note that both \eqref{cancelf} and \eqref{cancelf2} may be written in this form).  In either case, there is decay of the norm as $k \rightarrow \infty$ since $2s - i - \epsilon_i < 0$, so 
\begin{align*}
 \sum_{k=j+1}^\infty  \left| T^k_{+}(g,h_{j+l},f_j) \right. & \left. - T^k_{-}(g,h_{j+l},f_j) \right| 
\\ & \lesssim 2^{2sj} \nsm g\nsm_{L^2} \nsm h_{j+l}\nsm_{L^2_{\gamma+2s}} \nl \sum_{i=0}^2 2^{-ji} |\nabla_4|^i f_j \nr_{L^2_{\gamma+2s}}.
\end{align*}
Just as before, Cauchy-Schwartz is applied to the sum over $j$.  In this case $2^{(2s-i)j}$ is written as $2^{(s-i)j} 2^{s(j+l)} 2^{-sl}$; the first factor goes with $f$, the second with $h$, and the third remains for the sum over $l$.  Once again \eqref{lpbynorm} is employed.  Finally, the factor of $2^{-sl}$ allows one to finish the sum over $l$.

The desired bound for the nonlinear term is completed by performing summation of the terms \eqref{maincancelh}.  The pattern of inequalities is exactly the same as the one just described, this time using \eqref{cancelh} and \eqref{cancelh2}.  In particular, one has that
\begin{align*}
 \left| T^k_{+}(g,h_{j},f_{j+l}) \right. & \left. - T^k_{*}(g,h_{j},f_{j+l}) \right|
\\ &  \lesssim 2^{2sk} \nsm g\nsm_{L^2} \nsm f_{j+l}\nsm_{L^2_{\gamma+2s}} \nl \sum_{i=0}^2 2^{-ik - \epsilon_i k} |\nabla_4|^i h_j \nr_{L^2_{\gamma+2s}}.
\end{align*}
Again, both \eqref{cancelh} and \eqref{cancelh2} may be written in this form with  
$\epsilon_i>0$ satisfying $2s - i - \epsilon_i < 0$, leading to the corresponding inequality for the sum over $k$:
\begin{align*}
 \sum_{k=j+1}^\infty  \left| T^k_{+}(g,h_{j+l},f_j) \right. & \left. - T^k_{-}(g,h_{j+l},f_j) \right| 
\\ & \lesssim 2^{2sj} \nsm g\nsm_{L^2} \nsm h_{j+l}\nsm_{L^2_{\gamma+2s}} \nl \sum_{i=0}^2 2^{-ij} |\nabla_4|^i f_j \nr_{L^2_{\gamma+2s}}.
\end{align*}
The same Cauchy-Schwartz estimate is used for the sum over $j$; there  is exponential decay allowing the sum over $l$ to be estimated.  The end result is precisely \eqref{nlineq}.

\subsection{Upper bounds related to the linear operator}
In this section, we prove Lemma \ref{sharpLINEAR} and Lemma \ref{CompactEst}.
The inequality \eqref{normupper} follows by estimating $\ang{\Gamma(M,f),f}$ using \eqref{nlineq} (plus the basic observation that $|f|_{L^2} \lesssim |f|_{N^{s,\gamma}}$).  Regarding the ``compact'' piece of the linear term (i.e., the operator \eqref{compactpiece} and associated inequality \eqref{compactupper}), one would like to establish the inequality
in Lemma \ref{CompactEst}.

To achieve Lemma \ref{CompactEst}, we essentially reprise the arguments of the previous section, in a slightly simpler form.  
Now $M$ coincides with $e^{-v_4/2}$ restricted to the paraboloid $(v,\frac{1}{2} |v|^2)$ as has been already noted.  In particular, then
\[ M^{-1} P_j e_l (v) = \int_{\R^3} dv' \ang{v'} 2^{3j} \tilde{\varphi}(2^j(\ext{v} - \ext{v'})) p(\ext{v'}), \]
where $\tilde \varphi(v) = e^{2^{-j-1} v_4} \varphi(v)$.  
Recall $e_l(v)$ is defined above Lemma \ref{CompactEst}.
Thus, the asymptotic expansion \eqref{asympt} applied to $\tilde \varphi$ yields that 
\[ \tilde{\varphi}(2^j(\ext{v+2^{j} \tau_v u} - \ext{v})) = \varphi (\ext{\tau}_v u) + 2^{-j} \varphi^{(1)}(\ext{\tau}_v u) + E_j(u,v), \]
for some odd function $\varphi^{(1)}$.  Thus, just as was done for \eqref{intest} (namely, expanding $\tilde{\varphi}$, $p$ and $\ang{v'}$ and noting that all of the terms of order $1$ and $2^{-j}$ vanish by symmetry leaving only terms of order $2^{-2j}$ and higher), we may deduce that
\[ |M^{-1} Q_j e_l (v)| \lesssim 2^{-2j} \ang{v}^4, \]
and, more generally, 
\[ |M^{-1} |\nabla_4|^m Q_j e_l (v)| \lesssim 2^{(-2+m)j} \ang{v}^4, \]
for $m = 0,1,2$ (note that the power $4$ arises from the fact that $\ang{\ext{v}} \lesssim \ang{v}^2$ and the polynomial is quadratic). 
 Informally, these inequalities guarantee the intuitively obvious point that the function $e_l$ is, in fact, very smooth as measured by our Littlewood-Paley projections (in fact, the decay rate $2^{(-2+m)j}$ is limited only by the fact that we have not required that the scaling function $\varphi$ have many vanishing moments, in which case the decay would be even better).
Now we expand the sum
\begin{align*}
\ang{\Gamma(g,e_l),f} = \sum_{k=-\infty}^\infty \sum_{j=0}^\infty  \left\{ T^k_+(g,e_{lj},f) - T^k_*(g,e_{lj},f) \right\},
\end{align*}
where we abuse the future notation by defining $e_{l0} \eqdef P_0 e_l$ and $e_{lj} \eqdef Q_j e_l$ for $j > 0$.
To estimate the summand, we use both \eqref{tplusgf} and \eqref{tstargf} on $T^k_+$ and $T^k_{*}$ separately when $k \leq j$, and when $k > j$ we use the cancellation inequality \eqref{compactineq}. 
In both cases we also use the estimate for $M^{-1} Q_j e_{l}$ to obtain an inequality for $\nsm  |\nabla|_4^m e_{lj}\nsm_{L^2_{\gamma+2s}}$; namely, that it is bounded by a uniform constant times $2^{(-2+m)j}$.  Thus our inequality for the sum becomes (after exploiting \eqref{tplusgf}, \eqref{tstargf} and \eqref{compactineq})
\[  \left| \ang{\Gamma(g,e_l),f} \right| \lesssim \nsm g\nsm_{\eta,\delta} \nsm f \nsm_{\delta,\eta} \sum_{k=-\infty}^\infty \sum_{j=0}^\infty \sum_{m=0}^2 \min \{ 2^{2ks} 2^{-2j}, 2^{(2s-m)k} 2^{(-2+m)j} 2^{-\epsilon_m k} \}, \]
for appropriate nonnegative $\epsilon_m$'s which satisfy $2s - m - \epsilon_m < 0$.  Clearly this sum must be finite, which establishes the desired inequality for the compact piece.  Moreover, since each estimate for a trilinear form of $(g,h,f)$ has a corresponding analog with the roles of $h$ and $f$ reversed, an identical argument to the one just given (using the estimates \eqref{tplusgh}, \eqref{tminusgh} and \eqref{othercptineq}) establishes that
\[ \left| \ang{\Gamma(g,f),e_l} \right| \lesssim  \nsm g \nsm_{\eta,\delta} \nsm f \nsm_{\delta,\eta}, \]
holds uniformly for $\eta$ and $\delta$ as well.  In particular, Lemma \ref{CompactEst} follows.

\section{The main coercive inequality}
\label{sec:mainCOER}

This section is devoted to the proof of 
 Lemma \ref{estNORM3}.  Our approach involves  direct pointwise estimates of a Carleman representation in Section \ref{sec:pe}.  However this argument will not be completely sufficient, as explained below.  Thus in 
Section \ref{redistsec} we prove an estimate dubbed ``Fourier redistribution'' to finish the desired bound.  Then in Section \ref{sec:funcN} we will establish functional analytic results on the space $N^{s,\gamma}$.

\subsection{Pointwise estimates}\label{sec:pe}
For any Schwartz function $f$, consider the quadratic difference expression arising in the study of $\ang{N f,f}$ from \eqref{normexpr}.
By virtue of the Carleman-type change of variables, it is possible to express this semi-norm as
\eqref{semiCARLEMAN},
where the kernel can be computed with Proposition \ref{carlemanV2} in the Appendix to be
\begin{equation}
K(v,v')
\eqdef
2\int_{E_{v'}^{v}} d \pi_{v_*'} ~ \frac{M_* M_*'}{|v-v'| |v'-v_*'|} ~ B \left(2v - v' -v_*', \frac{v' - v_*'}{|v' - v_*'|} \right).  \label{kernel}
\end{equation}
Above $M_* = M(v'_*+ v' - v)$
and 
 the integration domain is the hyperplane
  $$
  E^{v}_{v'} \eqdef 
  \set{ v'_*\in \mathbb{R}^3}{    \ang{v' - v, v_*' -v} =0 }.
  $$
Then $d\pi_{v'} $ denotes the Lebesgue measure on this hyperplane.

Our goal is to estimate this kernel $K$ pointwise from below and compare it to the corresponding kernel for the norm $\nsm  \cdot \nsm_{N^{s,\gamma}}$ from \eqref{normdef}; this, by virtue of \eqref{lpbynorm},  allows control of our anisotropic Littlewood-Paley square function by 
$
\ang{Nf,f}.
$  
We make this estimate when $|v-v'| \leq 1$ and $| |v|^2 - |v'|^2 | \leq |v-v'|$.  This constraint will require the introduction of a somewhat technical argument, but it is necessary since the required pointwise bound fails to hold uniformly outside this region.

On the hyperplane $E_{v'}^{v}$, we have $\ang{v-v',v-v_*'} = 0$ and $|2v - v' - v_*'| = |v' - v_*'|$; in particular, then
\[ \ang{\frac{2v - v' - v_*'}{|2v - v' - v_*'|}, \frac{v' - v_*'}{|v'-v_*'|}} = \frac{|v - v_*'|^2- |v - v'|^2}{|v' - v|^2 + |v - v_*'|^2}. \]
By virtue of the lower bound for $b(\cos \theta)$ in \eqref{kernelQ}, it follows that
\[ 
B \left(2v - v' -v_*', \frac{v' - v_*'}{|v' - v_*'|} \right) \gtrsim 
\Phi(|v'-v_*' |) ~
\frac{|v'-v_*'|^{2+2s}}{|v-v'|^{2+2s}} {\bf 1}_{|v-v_*'| > |v-v'|}. 
\]
The indicator function must be included because of the support condition in \eqref{kernelQ}.
Thus the kernel $K(v,v')$ from \eqref{kernel} is bounded below by
\begin{equation}
|v-v'|^{-3-2s} \int_{E_{v'}^v} d \pi_{v_*'} ~ M_* M_*' \Phi(|v'-v_*'|)  |v'-v_*'|^{1+2s} {\bf 1}_{|v-v_*'| > |v-v'|}. 
\label{lower1}
\end{equation}
Next we consider the magnitude of the projections of $v_* = v'+v_*'-v$ and $v_*'$ in the direction of $v-v'$.  The orthogonality constraint $\ang{v-v',v-v_*'} = 0$ dictates that
\begin{align*}
\ang{v_*, \frac{v-v'}{|v-v'|}} = \ang{v', \frac{v-v'}{|v-v'|}}  & = \frac{-|v-v'|^2 + |v|^2 - |v'|^2}{2 |v-v'|}  \\
\ang{v_*', \frac{v-v'}{|v-v'|}} = \ang{v , \frac{v-v'}{|v-v'|}}  & = \frac{ |v-v'|^2 + |v|^2 - |v'|^2}{2 |v-v'|}.
\end{align*}
With our assumptions $|v-v'| \leq 1$ and $||v|^2 - |v'|^2| \leq |v-v'|$, both right-hand sides are uniformly bounded by $1$ in magnitude, implying that $|v_*|^2 + |v_*'|^2 \leq 2 |w_*'|^2 + 1$, where 
$w_*'$ is the orthogonal projection of $v_*'$ onto the hyperplane through the origin with normal $v-v'$, e.g. 
$w_*' = v_*' - \frac{v-v'}{|v-v'|}\ang{\frac{v-v'}{|v-v'|},v_*'}$.

This implies $M_* M_*' \gtrsim e^{-|w_*'|^2/2}$ uniformly.  Let $w'$ and $w$ be the orthogonal projections of $v'$ and $v$  respectively onto this same hyperplane through the origin with normal $v-v'$.  Trivially $|v' -v_*'| \geq | w' - w_*'|$.  Further  $|w' - v'| \leq 1$ since
\[ 
\left|\ang{v',v-v'} \right| = \frac{1}{2}\left||v|^2-|v'|^2- |v-v'|^2 \right| \leq |v-v'|.
\] 
Write $v_*' = w_*' + v - w$, then
we may parametrize the integral in \eqref{lower1} as an integral over $w_*'$ (with unit Jacobian) and
thereby bound \eqref{lower1} uniformly from below as
\[ 
K(v, v') \gtrsim
|v-v'|^{-3-2s} \int_{E'} d \pi_{w_*'} e^{- \frac{1}{2} |w_*'|^2}  |w' - w_*'|^{\gamma+2s+1}, 
\]
with
$
E' \eqdef \set{ w_*'}{ \ang{w_*',v-v'}=0, ~ |w' -w_*'| \geq |v-v'|}.  
$
This change of variable preserves these estimates
because the effect of parametrizing the integral with $w_*'$ is to shift the plane in the direction $v-v'$, while the inequalities for $M_* M_*'$ and $|v'-v_*'|$ are in terms of only the components orthogonal to $v-v'$.

If $|w'| \le 4$, then since $|v-v'| \leq 1$ it is not hard to see that
$$
\int_{E'} d \pi_{w_*'} ~ e^{- \frac{1}{2} |w_*'|^2}  |w' - w_*'|^{\gamma+2s+1} \ge 
\int_{E' \cap \{|w' -w_*'| \ge 1\}} d \pi_{w_*'} ~ e^{- \frac{1}{2} |w_*'|^2} \gtrsim 1.
$$
When 
$|w'| \ge 4$, 
we may restrict $w_*'$ to lie in the disk
$
\frac{1}{2} |w'| \ge  |w_*'| +1,
$
which implies in particular $|w' -w_*'| \approx |w'|$.
We thus have the following:
$$
\int_{E'} d \pi_{w_*'} e^{- \frac{1}{2} |w_*'|^2}  |w' - w_*'|^{\gamma+2s+1} \gtrsim
\ang{w'}^{\gamma+2s+1}\int_{0}^{\frac{1}{2} |w'| - 1}  d\rho ~ \rho e^{- \frac{1}{2} \rho^2} \gtrsim \ang{w'}^{\gamma+2s+1}.
$$
Since $|w' - v'| \leq 1$, 
the final, uniform estimate for \eqref{kernel} becomes:
\[ 
K(v, v') \gtrsim |v-v'|^{-3-2s} \ang{v'}^{\gamma+2s+1}
\ind_{|\ext{v} - \ext{v'}| \leq 1}  \ind_{||v|^2 - |v'|^2| \leq |v-v'|}.
\]
On this region $|\ext{v} - \ext{v'}| \lesssim |v-v'|$ and $\ang{v} \approx \ang{v'}$, so with \eqref{normexpr} 
we have uniformly 
\begin{equation} 
| f |_B^2
\gtrsim
\int_{\R^3} dv \int_{\R^3} dv' \frac{(f' - f)^2}{|\ext{v} - \ext{v'}|^{3+2s}} 
(\ang{v} \ang{v'})^{\frac{\gamma+2s+1}{2}} 
\ind_{|\ext{v} - \ext{v'}| \leq 1}  \ind_{||v|^2 - |v'|^2| \leq |v-v'|}. 
\label{coerciveptwise}
\end{equation}
 To obtain a favorable coercivity estimate from this, it would suffice to show that the expression \eqref{coerciveptwise} is bounded from below by the corresponding piece of \eqref{normdef}  (since the former expression has already been shown to be connected to our exotic Littlewood-Paley projections).    Because of the cutoff restricting 
 $
 \ind_{||v|^2 - |v'|^2| \leq |v-v'|}
 $
 a direct pointwise comparison is not sufficient.  This is not merely a limitation of the argument leading to \eqref{coerciveptwise}; in fact, a more involved analysis of \eqref{normexpr} shows that there is exponential
  decay of $K(v,v')$ in $|v-v'|$ when $v$ and $v'$ point in the same direction.  Thus there is an intrinsic obstruction to obtaining the correct coercive inequality by means of a simple, pointwise comparison of these expressions.

\subsection{Fourier redistribution}
\label{redistsec}
To get around this obstruction, we use the following trick (dubbed here ``Fourier redistribution'').  Essentially the idea is to appeal to the Fourier transform in the situation where the pointwise bound is not available.  The key idea is already contained in the following proposition:

\begin{proposition}
Suppose $K_1$ and $K_2$ are even, nonnegative, measurable functions on $\R^3$ satisfying \label{fourierdist}
\[ \int_{\R^3} du ~ K_l(u) |u|^2 < \infty, \qquad l=1,2. \]
Suppose $\phi$ is any smooth, nonnegative function on $\R^3$ and that there is some constant $C_\phi$ such that 
$|\nabla^2 \phi(u) | \le C_\phi $ 
for all $u$.
For $l=1,2$, consider the following quadratic forms (defined for arbitrary real-valued Schwartz functions $f$):
\[ \nsm f\nsm_{K_l}^2 \eqdef \int_{\R^3} dv \int_{\R^3} dv'~ \phi(v) \phi(v') K_l(v-v') (f(v) - f(v'))^2. \]
If there exists a finite, nonnegative constant $C$ such that, for all $\xi \in \R^3$
\[ \int_{\R^3} du~ K_1(u) |e^{2 \pi i \ang{\xi,u}} - 1|^2 \leq C + \int_{\R^3} du~ K_2(u) |e^{2 \pi i \ang{\xi,u}} - 1|^2, \]
then for all Schwartz functions $f$, 
\[ \nsm f\nsm_{K_1}^2 \leq \nsm f\nsm_{K_2}^2 + C' C_{\phi} \int_{\R^3} dv~ \phi(v) (f(v))^2, 
\]
where the constant $C'$ satisfies $C' \lesssim 1 + C+ \int_{\R^3} du (K_1(u) + K_2(u)) |u|^2$ uniformly in $K_1, K_2$, $\phi$ and $C$.
\end{proposition}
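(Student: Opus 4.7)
The plan is to pass to the Fourier side via Plancherel, where the hypothesis becomes a direct pointwise comparison of symbols, and then to handle the presence of the weight $\phi$ through an algebraic identity that separates a main term (to which Plancherel applies) from an error term controlled by the smoothness of $\phi$ and the evenness of $K$.

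My first step is to record the fundamental Plancherel identity: for any even, nonnegative $K$ with finite second moment and any Schwartz $G$,
\begin{equation*}
\int_{\R^3} \! dv \int_{\R^3} \! dv' ~ K(v-v')(G(v)-G(v'))^2 = \int_{\R^3} d\xi ~ |\hat G(\xi)|^2 \sigma_K(\xi),
\end{equation*}
where $\sigma_K(\xi) \eqdef \int K(u)|e^{2\pi i \langle \xi,u\rangle}-1|^2 du$ is precisely the symbol appearing in the hypothesis.  This follows by expanding the square, invoking the convolution theorem for the cross-term $\int\int G(v)G(v')K(v-v') dv dv' = \int \hat K(\xi) |\hat G(\xi)|^2 d\xi$, and using the evenness of $K$ to identify $\hat K(0) - \hat K(\xi) = \frac{1}{2}\sigma_K(\xi)$.

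To reduce the weighted form $\nsm f\nsm_K^2$ to the unweighted one, I substitute $F(v) \eqdef \phi(v)f(v)$ and invoke the identity
\begin{equation*}
\phi(v)\phi(v')(f(v)-f(v'))^2 = (F(v)-F(v'))^2 - \bigl(\phi(v)-\phi(v')\bigr)\bigl(\phi(v)f(v)^2 - \phi(v')f(v')^2\bigr),
\end{equation*}
verified by direct expansion.  Integrating against $K(v-v')$ decomposes $\nsm f\nsm_K^2$ as a main term plus an error $E_K(f)$.  Applying Plancherel to the main term for both $K_1$ and $K_2$ and invoking the hypothesis pointwise in $\xi$ yields
\begin{equation*}
\int\!\!\int K_1 (F-F')^2 \leq C\|F\|_{L^2}^2 + \int\!\!\int K_2 (F-F')^2.
\end{equation*}

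The heart of the argument is the control of $E_K(f)$ using the bounded Hessian of $\phi$.  The evenness of $K$ together with the symmetry under $v \leftrightarrow v'$ reduces $E_K(f)$ to $2\int \phi(v) f(v)^2 J_K(v) dv$, where $J_K(v) = \int K(u)\bigl(\phi(v)-\phi(v-u)\bigr) du$.  A Taylor expansion $\phi(v-u) = \phi(v) - u\cdot \nabla\phi(v) + \frac{1}{2} u^T \nabla^2\phi(\eta_{v,u}) u$ combined with the evenness of $K$ (which kills the linear term by $\int K(u) u\, du = 0$) and $|\nabla^2\phi| \leq C_\phi$ yields $|J_K(v)| \leq \frac{C_\phi}{2} \int K(u)|u|^2 du$, hence $|E_K(f)| \lesssim C_\phi \bigl(\int K|u|^2\bigr) \int \phi f^2$.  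Combining this with the Plancherel step produces the claim, with $C'$ absorbing the constant $C$ and the second moments of $K_1, K_2$.

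The main obstacle will be the reconciliation of the $L^2$-norm $\|F\|_{L^2}^2 = \int \phi^2 f^2$ appearing from the Plancherel step with the target $\int \phi f^2$ on the right-hand side.  In the intended applications $\phi$ is a bounded localization (e.g.\ a bump from a partition of unity with $\phi \leq 1$), in which case $\phi^2 \leq \phi$ and the extra factor is harmless; in general, one may instead adopt the substitution $F = \phi^{1/2} f$, exploiting the consequence $|\nabla\phi|^2 \leq 2 C_\phi \phi$ of nonnegativity and the Hessian bound (a Glaeser-type inequality) to keep variations of $\phi^{1/2}$ under control without introducing unbounded error terms.
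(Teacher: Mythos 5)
Your proposal is essentially the paper's own argument: the same algebraic identity separating $(\phi(v) f(v) - \phi(v') f(v'))^2$ from the $\phi$-difference error, the same Taylor-plus-evenness (principal value) bound on that error via the second moments of $K_l$, and the same Plancherel comparison of the symbols $\int K_l(u)|e^{2\pi i \langle \xi, u\rangle}-1|^2\,du$. The $\int \phi^2 f^2$ versus $\int \phi f^2$ mismatch you flag at the end is present in the paper's proof as well, which simply absorbs the term $C\|\phi f\|_{L^2}^2$ into the constant $C'$ (harmless in the intended application, where the partition functions are uniformly bounded), so your closing remark does not change the verdict.
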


\begin{proof}
We begin with the following identity:
\begin{align*}
 \phi(v) \phi(v') (f(v) - f(v'))^2 = \ & (\phi(v) f(v) - \phi(v') f(v'))^2 \\
 & \ + \phi(v) (f(v))^2 (\phi(v') - \phi(v)) \\
& \ + \phi(v') (f(v'))^2 (\phi(v) - \phi(v')). 
\end{align*}
Multiply both sides by $K_l(v-v')$ and integrate with respect to $v$ and $v'$.  Exploiting symmetry, the result is:
\begin{align*}
 \int_{\R^3} dv \int_{\R^3} dv' & K_l(v-v') (f(v) - f(v'))^2 \phi(v) \phi(v') \\
& = \int_{\R^3} dv \int_{\R^3} dv' K_l(v-v') ( \phi(v) f(v) - \phi(v') f(v'))^2 \\
 & \ \ \ \ \ \  + 2 \int_{\R^3} dv \phi(v) (f(v))^2 \ {p.v.} \! \! \int_{\R^3} dv' K_l(v-v') (\phi(v') - \phi(v)).
\end{align*}
Now Taylor's theorem and the hypotheses on the second derivative of $\phi$ dictate 
\begin{align*}
 | \phi(v') - \phi(v) &  - \ang{v'-v, \nabla \phi(v) } | \leq \frac{1}{2} |v'-v|^2 C_\phi.
\end{align*}
If we define $C(K_l) \eqdef \int du K_l(u) |u|^2$, it follows that the difference
\begin{align*}
 \left| \int_{\R^3} dv \int_{\R^3} dv' \right. & K_l(v-v') (f(v) - f(v'))^2 \phi(v) \phi(v') \\
&  \left. - \int_{\R^3} dv \int_{\R^3} dv' K_l(v-v') ( \phi(v) f(v) - \phi(v') f(v'))^2 \right|, 
\end{align*}
is bounded above by $C(K_l) \int_{\R^3} dv \phi (v) (f(v))^2$.  If we cutoff $|u| > \epsilon$, then clearly the Plancherel formula can be applied to the second term inside the absolute values above, with $F(v) \eqdef \phi(v) f(v)$, to get 
\begin{align}
 \int_{\R^3} dv \int_{\R^3} du &~ K_l(u) ( F(v+u) - F(v))^2 \ind_{|u| > \epsilon} \nonumber \\
& = \int_{\R^3} d \xi \int_{\R^3} du ~ K_l(u) |e^{2 \pi i \ang{\xi,u}}-1|^2 |\widehat{ F}(\xi)|^2 \ind_{|u| > \epsilon}. \label{plancherel}
\end{align}
Clearly the limiting case $\epsilon \rightarrow 0$ will hold as well because $|e^{2 \pi i \ang{\xi,u}}-1|^2$ vanishes to second order in $u$ and $\widehat{F}$ may be assumed to have arbitrarily rapid decay in $|\xi|$.  From here, the remainder is clear.  The hypotheses on $K_1$ and $K_2$ give that the limit of the Plancherel term \eqref{plancherel} is bounded above by the Plancherel term for $K_2$ plus $C$ times the $L^2$-norm of $\hat F$.  This term plus the errors in comparing the Plancherel pieces \eqref{plancherel} to the norms $\nsm \cdot\nsm_{K_l}^2$ give rise to the constant $C'$.
\end{proof}

Next, fix functions $K_1,K_2$ on $\R^4$ given by $K_1(u) \eqdef |u|^{-3-2s} \ind_{|u| \leq 1}$ and $K_2(u) \eqdef |u|^{-3-2s} \ind_{|u| \leq 1} \ind_{|u_4| \leq \epsilon |u|}$, that is, $K_1$ is restricted to the unit ball and $K_2$ is further restricted to 
$(1 - \epsilon^2) u_4^2 \leq \epsilon^2 (u_1^2 + u_2^2 + u_3^2)$.  
We define the semi-norm $N_0$ by
\begin{align*}
\nsm f\nsm_{N_0}^2 & \eqdef \int_{\R^3} dv \int_{\R^3} dv' K_2(\ext{v} - \ext{v'}) (f' - f)^2 (\ang{v'}\ang{v})^{\frac{\gamma+2s+1}{2}}.
\end{align*}
Note that, if $K_2$ is replaced by $K_1$, the resulting expression is the derivative part of our main norm \eqref{normdef}.
By a pointwise comparison of $K_1$ and $K_2$, it is trivially true that $\nsm f\nsm_{N_0} \lesssim \nsm f\nsm_{N^{s,\gamma}}$, but our goal is to prove an inequality in the reverse direction.  To that end, let $\{ \phi \}$ be a smooth partition of unity on $\R^4$ which is locally finite and satisfies uniform bounds for each $\phi$ and their first and second (Euclidean) derivatives.  Suppose furthermore that each $\phi$ is supported on a (Euclidean) ball of radius $\frac{\epsilon}{8}$.  
Recall the notation from Section \ref{sec:acd}.
We restrict these functions to the paraboloid $(v, \frac{1}{2} |v|^2)$ and insert them into the norms $\nsm \cdot\nsm_{N^{s,\gamma}}$ and $\nsm \cdot\nsm_{N_0}$:
\begin{align}
\int_{\R^3} dv \int_{\R^3} dv' K_l(\ext{v} - \ext{v'}) (f' - f)^2 \ang{v}^{\frac{\gamma+2s+1}{2}} \ang{v'}^{\frac{\gamma+2s+1}{2}} \phi(\ext{v}) \phi(\ext{v'}), \label{cutoffterms}
\end{align}
for $l=1,2$.  Suppose that $v_0 \in \R^3$ satisfies $\phi(\ext{v_0}) \neq 0$ for some fixed $\phi$.  Make the change of variables $v \mapsto v_0 + \tau_{v_0} u$ and likewise for $v'$; including the Jacobian factor $\ang{v_0}^{-1}$ for each integral, the result is an integral over $u$ and $u'$ of the integrand
\begin{align*}
 \ang{v_0}^{-2} & K_l(\ext{v_0 + \tau_{v_0} u}   -  \ext{v_0 + \tau_{v_0} u'})   (f(v_0 + \tau_{v_0} u) - f(v_0 + \tau_{v_0} u'))^2 \\
& \times \phi(\ext{v_0 + \tau_{v_0} u}) \phi(\ext{v_0 + \tau_{v_0} u'}) (\ang{v_0 + \tau_{v_0} u} \ang{v_0 + \tau_{v_0} u'})^{\frac{\gamma+2s+1}{2}}. 
\end{align*}
Now we expand.  The argument of $K_l$, for example, becomes
\begin{align*}
 \ext{v_0 + \tau_{v_0} u} - \ext{v_0 + \tau_{v_0} u'} & = \ext{\tau}_{v_0} (u-u') + \frac{1}{2} (|u|^2 - |u'|^2)e_4 \\
& = \ext{\tau}_{v_0} (u-u') + \ang{u-u',\frac{u+u'}{2}} e_4.
\end{align*}
Since the support of $\phi$ is in a ball of radius $\frac{\epsilon}{8}$, it follows that $| \frac{u+u'}{2}  | \leq \frac{\epsilon}{4}$, hence the magnitude of the coefficient of $e_4$ is at most $\frac{\epsilon}{4} |u-u'| = \frac{\epsilon}{4} |\ext{\tau}_{v_0} (u-u')|$, so 
$$
|(\ext{v_0 + \tau_{v_0} u }-\ext{v_0 + \tau_{v_0} u'}) - \ext{\tau}_{v_0} (u-u')| \leq \frac{\epsilon}{2}|\ext{\tau}_{v_0} (u-u')|.
$$  
Thus on this piece of the partition, $K_1$ may be bounded above by
\begin{align*}
 K_1(\ext{v_0+\tau_{v_0}u } & - \ext{v_0 + \tau_{v_0} u'}) \phi(\ext{v_0+\tau_{v_0}u}) \phi(\ext{v_0 + \tau_{v_0} u'}) \\
& \lesssim |u-u'|^{-3-2s} \ind_{|u-u'| \leq 2}   \phi(\ext{v_0 + \tau_{v_0} u})  \phi(\ext{v_0 + \tau_{v_0} u'}),
\end{align*}
which is translation-invariant in $u$ and $u'$.

Next we must make a similar bound for $K_2$ from below. On this piece of the partition, if the fourth coordinate of $\ext{\tau}_{v_0}(u-u')$ is bounded above by $\frac{\epsilon}{2} |\tau_{v_0}(u-u')|$, then the fourth coordinate of $(\ext{v_0 + \tau_{v_0} u }-\ext{v_0 + \tau_{v_0} u'})$ will be at most 
$$
\frac{\epsilon}{2} |\tau_{v_0}(u-u')| + \frac{\epsilon}{4} |\ext{\tau}_{v_0}(u-u')| \leq \frac{3 \epsilon}{4 - 2 \epsilon} |\ext{v_0 + \tau_{v_0} u }-\ext{v_0 + \tau_{v_0} u'}|.
$$
  Thus it also holds that
\begin{align*}
 |u-u'|^{-3-2s} & \ind_{|u-u'| \leq \frac{1}{2}} \ind_{\ang{v_0}^{-1} |\ang{v_0,u-u'}| \leq \frac{\epsilon}{2} |u-u'|} \phi(\ext{v_0 + \tau_{v_0} u})  \phi(\ext{v_0 + \tau_{v_0} u'}) \\
& \lesssim K_2(\ext{v_0+\tau_{v_0}u }  - \ext{v_0 + \tau_{v_0} u'}) \phi(\ext{v_0+\tau_{v_0}u}) \phi(\ext{v_0 + \tau_{v_0} u'}),
\end{align*}
which is also translation-invariant. To apply the proposition, then, it suffices to check the Fourier condition and estimate the derivatives of the cutoff functions.
Clearly zeroth-order through second-order derivatives of 
\[ \ang{v_0}^{-1} \ang{v_0 + \tau_{v_0} u}^{\frac{\gamma+2s+1}{2}} \phi (\ext{v_0 + \tau_{v_0} u}), \]
with respect to $u$ will be uniformly bounded by $\ang{v_0}^{\frac{\gamma+2s-1}{2}}$ by virtue of the corresponding estimates for $\phi$ coupled with the fact that $\tau_{v_0}$ has norm $1$ as a mapping of Euclidean vector spaces and $\ext{\tau}_{v_0}$ is an isometry. 

Modulo the verification of the Fourier condition, then, the norm $\nsm f\nsm_{N^{s,\gamma}}^2$ on any particular piece of the partition will be bounded above by $\nsm f\nsm_{N_0}^2$ on the same piece plus an error term which is given by integration $\int du (f(v_0 + \tau_{v_0} u))^2 \phi$ times a constant linear in $\phi$.  That is, the difference of the quantities given by \eqref{cutoffterms} for $l=1$ and $l=2$  is at most
\[ C' C_{\tilde \phi} \int_{\R^3} du ~ \tilde \phi(v_0 + \tau_{v_0} u) |f(v_0 + \tau_{v_0} u )|^2, \] 
as a result of the previous proposition. Here $\tilde \phi \eqdef \ang{v}^{(\gamma+2s-1)/2} \phi$ (recall that the extra factor of $\ang{v_0}^{-1}$ comes from the change-of-variables we employed).
Thus the quadratic dependence on $\tilde \phi$ gives a factor of $\ang{v_0}$ to the power $\gamma+2s-1$; however an additional factor of $\ang{v_0}$ is obtained when the change-of-variables is reversed (that is, $v_0 + \tau_{v_0} u$ reverts back to $v$).  Thus, summing over the partition will give
\begin{equation*} 
\nsm  f \nsm_{N^{s,\gamma}}^2 \lesssim \nsm f\nsm_{N_0}^2 + \int_{\R^3} dv (f(v))^2 \ang{v}^{\gamma+2s}. 
\end{equation*}
To complete the comparison, then, it suffices to make the following estimate:
\begin{proposition}
Fix any $\epsilon > 0$, and let $E_1$ and $E_2$ be the sets in $\R^3$ given by   $E_1 \eqdef \set{ u \in \R^3}{ |u| \leq 2}$ and $E_2 \eqdef \set{u \in \R^3}{ |u| \leq \frac{1}{2} \mbox{ and } |u_3| \leq \epsilon |u|}$.  Then
\begin{equation}
 \int_{E_1} du ~ |e^{2 \pi i \ang{\xi,u}} - 1|^2 |u|^{-3-2s} \lesssim 1 + \int_{E_2} du ~ |e^{2 \pi i \ang{\xi,u}} - 1|^2 |u|^{-3-2s}, \label{plancherelcheck}
\end{equation}
uniformly for all $\xi \in \R^3$.
\end{proposition}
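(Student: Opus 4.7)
My plan is to reduce the inequality to a scaling argument, then exploit rotation invariance on the left-hand side and a compactness-plus-nondegeneracy argument on the right-hand side. The first step is to split into the cases $|\xi| \le 1$ and $|\xi| \ge 1$. In the small-frequency regime, the elementary bound $|e^{2\pi i \langle \xi, u\rangle}-1|^2 \lesssim |\xi|^2 |u|^2$ combined with the fact that $|u|^{-1-2s}$ is integrable on $\{|u|\le 2\}$ (since $s<1$) gives $\text{LHS} \lesssim |\xi|^2 \le 1$, which is absorbed by the constant on the right-hand side of \eqref{plancherelcheck}.

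For $|\xi| \ge 1$, I would apply the change of variables $v = |\xi| u$ to both integrals. Writing $\omega \eqdef \xi/|\xi|$, the left-hand side becomes
\[ |\xi|^{2s} \int_{|v| \le 2|\xi|} |e^{2\pi i \langle \omega, v\rangle}-1|^2 |v|^{-3-2s}\, dv \le |\xi|^{2s} \int_{\R^3} |e^{2\pi i \langle \omega, v\rangle}-1|^2 |v|^{-3-2s}\, dv = c_s |\xi|^{2s}, \]
where the final integral is finite (quadratic vanishing at $0$, bounded decay $|v|^{-3-2s}$ at infinity) and equals a constant $c_s$ independent of $\omega$ by rotational symmetry of Lebesgue measure on $\R^3$. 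Similarly, the right-hand side integral becomes $|\xi|^{2s}$ times an integral over the cone $\{|v_3|\le \epsilon|v|\}$ intersected with $\{|v|\le |\xi|/2\}$; because $|\xi| \ge 1$, this domain contains the fixed set $E \eqdef \{|v| \le 1/2,\ |v_3|\le \epsilon|v|\}$. Thus it suffices to prove
\[ \inf_{\omega \in S^2} I(\omega) > 0, \qquad I(\omega) \eqdef \int_E |e^{2\pi i \langle \omega, v\rangle}-1|^2 |v|^{-3-2s}\, dv. \]

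The key step is this uniform lower bound. The map $\omega \mapsto I(\omega)$ is continuous on the compact sphere $S^2$ by dominated convergence (the integrand is bounded by $\min\{4\pi^2 |v|^{-1-2s}, 4|v|^{-3-2s}\}$, which is integrable on $E$). For each fixed $\omega \in S^2$, the exponential satisfies $e^{2\pi i \langle \omega, v\rangle} = 1$ only on the countable union of hyperplanes $\{v : \langle \omega, v\rangle \in \Z\}$, which is a Lebesgue-null set, while $E$ has non-empty interior; hence $I(\omega) > 0$ pointwise. Compactness of $S^2$ then promotes this to a uniform strict positive lower bound, giving $\text{RHS} \gtrsim |\xi|^{2s}$ and completing the proof in the regime $|\xi| \ge 1$.

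The only delicate point is the compactness argument: it is essential that the cone $\{|v_3| \le \epsilon|v|\}$ have non-empty interior (which holds for any $\epsilon > 0$), so that the trivial vanishing set of $e^{2\pi i \langle \omega, v\rangle}-1$ cannot cover $E$. Everything else is a routine matter of scaling and standard size estimates, so I expect no further obstructions.
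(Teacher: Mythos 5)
Your argument is correct, and it reaches the same underlying fact as the paper --- both sides are comparable to $|\xi|^{2s}$ once $|\xi| \geq 1$, while the regime $|\xi| \leq 1$ is absorbed into the constant $1$ --- but by a genuinely different mechanism. The paper writes both integrals in polar coordinates, reducing each side to $\int_{\tilde E_l} d\sigma\, \Psi(\ang{\xi,\sigma})$ for a one-dimensional radial profile $\Psi(\lambda)=\int_0^a |e^{2\pi i \lambda t}-1|^2 t^{-1-2s}\,dt$, proves the explicit asymptotics $\Psi(\lambda)\approx \lambda^{2s}$ for $\lambda$ bounded below (and $\lesssim \lambda^{2s}$ always), and then lower-bounds the right side by observing that a positive-measure portion of the equatorial band has $|\ang{\xi,\sigma}|\gtrsim |\xi|$. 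You instead rescale by $v=|\xi|u$, dominate the left side by the full-space integral $c_s|\xi|^{2s}$ (finite by quadratic vanishing at the origin and decay at infinity, and independent of the direction $\omega=\xi/|\xi|$ by rotation invariance), and lower-bound the right side by $|\xi|^{2s}\inf_{\omega\in\mathbb{S}^2} I(\omega)$, where positivity of the infimum follows from continuity (dominated convergence), pointwise positivity (the zero set $\{\ang{\omega,v}\in\mathbb{Z}\}$ is null while the truncated cone has nonempty interior since $\epsilon>0$), and compactness of the sphere. The trade-off: the paper's route is fully quantitative and yields the two-sided comparability of each side with $|\xi|^{2s}$, with constants one can in principle track in $\epsilon$ and $s$; your route is softer --- the lower bound for the cone integral comes from a compactness argument and is non-explicit --- but it avoids the polar-coordinate profile analysis entirely and is, if anything, more elementary. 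Both correctly exploit that the cone restriction $|u_3|\leq\epsilon|u|$ is scale-invariant and leaves a set of positive angular measure, and in both cases the implied constant depends on the fixed $\epsilon$ and $s$, which is all the statement requires.
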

\begin{proof}
Writing both sides in polar coordinates, we see that each side may be realized as an integral over the unit sphere $\sph$ of
\[ \int_{\tilde{E}_l} d \sigma  ~ \Psi(\ang{\xi,\sigma}), \]
where $\tilde{E}_1 = \sph$, $\tilde{E}_2$ is a small band near the equator, and $\Psi(\lambda)$ is 
of the form
\[ 
\Psi(\lambda) \eqdef \int_0^a dt ~ |e^{2 \pi i \lambda t} - 1|^2 t^{-1-2s}, 
\]
for some appropriate value of $a$ (
$a=2$ or $a = \frac{1}{2}$).
From the elementary inequalities
\begin{align*}
 \int_0^{(2 \lambda)^{-1}} dt |e^{2 \pi i \lambda t} - 1|^2 t^{-1-2s}  & \approx \int_0^{(2 \lambda)^{-1}} dt \lambda^2 t^2 t^{-1-2s} \approx \lambda^{2s} \\
\int_{(2 \lambda)^{-1}}^\infty dt |e^{2 \pi i \lambda t} - 1|^2 t^{-1-2s} & \lesssim \int_{(2 \lambda)^{-1}}^\infty dt \ t^{-1-2s} \approx \lambda^{2s} ,
\end{align*}
it follows that the integrands will be comparable to $|\ang{\xi,\sigma}|^{2s}$ when this quantity is bounded below by a fixed constant and less than a constant times $|\ang{\xi,\sigma}|^{2s}$ regardless of whether or not this quantity is bounded below.  For any $\xi$ with $|\xi| \geq 1$, then, at least a positive measure region of $\sph$ will have $|\ang{\xi,\sigma}| \gtrsim |\xi|$ (whether in $\tilde{E}_1$ or $\tilde{E_2}$), so both sides of \eqref{plancherelcheck} will be comparable to $|\xi|^{2s}$, which is sufficient for the inequality \eqref{plancherelcheck} to hold.
\end{proof}

The proof of the coercive inequality is now complete, for we demonstrated that
\[ 
\int_{\R^3} dv \int_{\R^3} dv_* \int_{\sph} d \sigma ~ B (f'-f)^2 M_*'M_* \gtrsim \nsm f\nsm_{N_0}^2, 
\]
by direct pointwise comparison and that $\nsm f\nsm_{N_0}^2 + \nsm f\nsm_{L^2_{\gamma+2s}}^2 \gtrsim \nsm f\nsm_{N^{s,\gamma}}$ by Fourier redistribution. The combination of these inequalities gives Lemma \ref{estNORM3}.

\subsection{Regarding the functional analysis of $N^{s,\gamma}$}
\label{sec:funcN}
  An important consequence of the analysis of the previous section is that we have an alternate characterization of the space $N^{s,\gamma}$ in terms of the usual Sobolev spaces.  In particular, let $\{ \phi_i \}$ be a partition of unity constructed as above by restricting a smooth, locally finite partition of unity on $\R^4$ (such that each $\phi_i$ has support in a ball of unit radius) to the paraboloid $(v, \frac{1}{2} |v|^2)$.  For each $\phi_i$ in the partition, let $v_i$ be some point in its support.  If we define
\[ f_i(u) \eqdef \phi_i(\ext{v_i + \tau_{v_i} u}) f(v_i + \tau_{v_i} u), \]
it follows that we have the comparison
\begin{equation} 
|f|_{N^{s,\gamma}}^2 \approx \sum_{i=1}^\infty \ang{v_i}^{\gamma+2s-1} |f_i|_{H^s}^2, \label{isosobolev}
\end{equation}
where $H^s$ is the usual (three-dimensional) $L^2(\mathbb{R}^3)$-Sobolev space.  This result is true by virtue of the fact that
\[ 
|f|_{H^s}^2 \approx |f|_{L^2}^2 + \int_{\R^3} dv \int_{\R^3} dv' \frac{(f(v')- f(v))^2}{|v-v'|^{3+2s}} {\mathbf 1}_{|v-v'| \leq 1}, 
\]
which follows itself by an application of the Plancherel theorem as in Proposition \ref{fourierdist} together with the asymptotic estimates for the integrals \eqref{plancherelcheck}.

With the aid of \eqref{isosobolev}, a number of elementary functional analysis properties of $N^{s,\gamma}$ reduce to the situation of the standard Sobolev spaces.  For example, it is a simple exercise to show that Schwartz functions are dense in $N^{s,\gamma}$ by exploiting this same fact for the space $H^s$, approximating $f_i$ individually in $H^s$, and summing over the partition (note that this requires the elements of the partition $\phi_i$ themselves to be Schwartz functions, but this additional restriction is not a problem to satisfy).

A somewhat more sophisticated result which may be obtained by similar reasoning is the fact that $N^{s,\gamma}$ embeds compactly in $L^2$.  Clearly one also has the comparability statement that
\[ \sum_i \ang{v_i}^{-1} |f_i|_{L^2}^2 \approx |f|_{L^2}^2, \]
since
\[ 
\sum_i \ang{v_i}^{-1} |f_i|_{L^2}^2 = \int_{\R^3} dv \left(\sum_i |\phi_i(v)|^2 \right) |f(v)|^2, 
\]
and the sum of the squares of the $\phi_i$'s must be uniformly bounded above and below.  Now suppose $f^n$ is any sequence of functions in $N^{s,\gamma}$ with norms uniformly bounded by $1$, having the weak limit $f^0 \in N^{s,\gamma}$. Then the compact embedding of $H^s$ into $L^2$ on compact domains ensures that $f^n_i$ converges in $L^2$ for each $i$; the limit of $f^n_i$ must equal $f^0_i$.  Now by \eqref{isosobolev} and the corresponding estimate for the $L^2$-norm, we have for any $R > 1$ that
\begin{gather*}
 | f^n - f^0|_{L^2}  \approx \sum_{i=1}^\infty \ang{v_i}^{-1} |f^n_i - f^0_i|^2_{L^2} 
\\  
\lesssim 
\sum_{i\ :\ |v_i| \leq R}  \ang{v_i}^{-1} |f^n_i - f^0_i|^2_{L^2} + R^{-\gamma-2s} \sum_{i \ :\ |v_i| \geq R} \ang{v_i}^{\gamma+2s-1} |f^n_i - f^0_i|^2_{H^s}
\\
\lesssim 
\sum_{i\ :\ |v_i| \leq R}  \ang{v_i}^{-1} |f^n_i - f^0_i|^2_{L^2} + 
R^{-\gamma-2s} 
|f^n - f^0|_{N^{s,\gamma}}.
\end{gather*}
To show that $f^n$ converges to $f^0$ in $L^2$, simply observe that the {\it finite} sum over $i$ in the last line above goes to zero by compactness for any fixed $R$, while $|f^n - f^0|_{N^{s,\gamma}} \lesssim 1$ uniformly for all $n$; thus taking $R \rightarrow \infty$ establishes the claim.

\section{De-coupled space-time estimates and global existence} 
\label{sec:deBEest}

In this last section, we show that the sharp estimates proven in the previous sections can be applied to the modern technology from the linearized cut-off Boltzmann theory to establish global existence.   This works precisely because of the specific structure of the interactions between the velocity variables and the space-time variables.  The methodology that we employ essentially de-couples the  required space-time estimates that are needed from the new fractional and anisotropic derivative estimates which are shown in the previous sections.

The method that we choose to utilize in this section goes back to Guo \cite{MR2000470}.  A key point of this approach is to derive a system of space-time ``macroscopic equations,'' see \eqref{c} - \eqref{adot} below, which have certain elliptic structure and also some hyperbolic structures. These structures can be used to prove an instantaneous coercive lower bound for the linear operator $L$, for solutions to the full non-linear equation \eqref{Boltz}, in our new precise norm \eqref{normdef}.
This original method \cite{MR2000470} made use of high order temporal derivatives, which we could also utilize.  
But as a result of advances in  \cite{MR2095473}, \cite{Jang2009VMB,MR2420519} the need for temporal derivatives was removed from the method.  The key point here is to use both the macroscopic equations \eqref{c} - \eqref{adot} and the conservation  laws \eqref{cl.0} - \eqref{cl.2} to remove the need to estimate time derivatives with an ``interaction functional'' that is comparable to the energy.  We point the readers attention to the general abstract framework of  \cite{villani-2006}  also
in this direction.

In what follows, we will show that the use of our new precise weighted geometric fractional derivative norm \eqref{normdef} and our crucial sharp estimates stated in
Lemma \ref{NonLinEst},
Lemma \ref{estNORM3},
and
Lemma \ref{CompactEst} can be combined with the above general de-coupled energy method in order to prove global existence and decay as in our main Theorem \ref{mainGLOBAL}.

We will now discuss the coercivity of the linearized collision operator, $L$, away from its null space.  
More generally, from the H-theorem  $L$ is non-negative and for every fixed $(t,x)$ the null
space of $L$ is given by the five dimensional space 
\begin{equation}
{\mathcal N}\eqdef {\rm span}\left\{
\sqrt{\mu},
v_1 \sqrt{\mu}, v_2 \sqrt{\mu}, v_3 \sqrt{\mu}, 
|v|^2 \sqrt{\mu}\right\}.
 \label{null}
\end{equation}
We define the orthogonal projection from $L^2(\mathbb{R}^3_v)$ onto the null space ${\mathcal N}$ by ${\bf P}$. 
Further expand ${\bf P} h$ as a linear combination of the basis in (\ref{null}): 
\begin{equation}
{\bf P} h
\eqdef
 \left\{a^h(t,x)+\sum_{j=1}^3 b_j^h(t,x)v_j+c^h(t,x)|v|^2\right\}\sqrt{\mu}.
\label{hydro}
\end{equation}
%
We can then decompose  $f(t,x,v)$ as
\[
f={\bf P}f+\{{\bf I-P}\}f.
\]
We characterize in 
Lemma \ref{lowerN} the functional properties of the linearized collision operator that are useful for our main results.  

\begin{lemma}
\label{lowerN}  $L\ge 0$.
$Lh = 0$ if and only if $h = {\bf P} h$.  
And
$\exists \delta_0>0$ such that
\begin{equation*}
\langle L h, h \rangle
\ge
\delta_0 
| \{ {\bf I - P } \} h |_{N^{s,\gamma}}^2.
\end{equation*}
\end{lemma}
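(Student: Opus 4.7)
The plan is to execute three steps.

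\emph{Step 1 (Non-negativity and null space).} The pre-post collisional change of variables combined with the Maxwellian identity $\mu\mu_* = \mu'\mu_*'$ — essentially the same symmetrization that produced the Hilbert-space identity below \eqref{normpiece} — yields the classical formula
\begin{equation*}
\ang{Lh,h} = \frac{1}{4}\int_{\R^3} dv \int_{\R^3} dv_* \int_{\sph} d\sigma\, B\, \mu\mu_*\left(\frac{h}{\sqrt{\mu}} + \frac{h_*}{\sqrt{\mu_*}} - \frac{h'}{\sqrt{\mu'}} - \frac{h_*'}{\sqrt{\mu_*'}}\right)^2 \ge 0.
\end{equation*}
Thus $L \ge 0$. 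If $\ang{Lh,h} = 0$ then $h/\sqrt{\mu}$ is a collisional invariant $B$-a.e., and the classical Boltzmann classification of collisional invariants identifies these with the polynomials $a + b\cdot v + c|v|^2$; thus $h \in \mathcal{N}$. Consequently $Lh = 0 \iff h = \mathbf{P}h$, which also exhibits $L$ as formally self-adjoint.

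\emph{Step 2 (Coercivity up to a compact defect).} Since $L$ is self-adjoint and annihilates $\mathcal{N}$, we have $\ang{Lh,h} = \ang{Lg,g}$ for $g \eqdef \{\mathbf{I-P}\}h$, so it is enough to prove $\ang{Lg,g} \gtrsim |g|_{N^{s,\gamma}}^2$ for such $g$. I would decompose $L = N + K$ via \eqref{normpiece}--\eqref{compactpiece}. The identity for $\ang{Ng,g}$ stated just after \eqref{normpiece}, together with $\nu(v) \approx \ang{v}^{\gamma+2s}$ and Lemma~\ref{estNORM3}, gives $\ang{Ng,g} = \tfrac{1}{2}|g|_B^2 + \int \nu|g|^2 \gtrsim |g|_B^2 + |g|_{L^2_{\gamma+2s}}^2 \gtrsim |g|_{N^{s,\gamma}}^2$. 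The compact upper bound \eqref{compactupper} applied with sufficiently small $\eta$ absorbs the weighted $L^2_{\gamma+2s}$-portion into $|g|_{N^{s,\gamma}}^2$ and produces the global bound
\begin{equation*}
\ang{Lg,g} \ge c_1 |g|_{N^{s,\gamma}}^2 - c_2 |g|_{L^2}^2, \qquad c_1, c_2 > 0,
\end{equation*}
valid for every $g \in N^{s,\gamma}$ (not only those with $\mathbf{P}g = 0$).

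\emph{Step 3 (Absorbing the defect by compactness).} To upgrade this to a true coercive bound for $g = \{\mathbf{I-P}\}h$, I would argue by contradiction in the standard fashion. If no $\delta_0 > 0$ worked, there would exist a sequence $\{g_n\}$ with $\mathbf{P}g_n = 0$, $|g_n|_{L^2} = 1$, and $\ang{Lg_n, g_n} \to 0$; Step 2 forces $\{g_n\}$ to be bounded in $N^{s,\gamma}$. By the compact embedding $N^{s,\gamma} \hookrightarrow L^2$ established in Section~\ref{sec:funcN}, a subsequence (not relabeled) converges strongly in $L^2$ and weakly in $N^{s,\gamma}$ to some $g_0$ with $|g_0|_{L^2} = 1$ and $\mathbf{P}g_0 = 0$ (by the $L^2$-continuity of $\mathbf{P}$). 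The quadratic form $\ang{N\cdot,\cdot}$ is weakly lower semicontinuous on $N^{s,\gamma}$, being the sum of a squared-difference integral and a positive weighted $L^2$ norm. Moreover, polarizing \eqref{compactupper} to the bound $|\ang{K\varphi,\psi}| \lesssim \eta |\varphi|_{L^2_{\gamma+2s}} |\psi|_{L^2_{\gamma+2s}} + C_\eta |\varphi|_{L^2}|\psi|_{L^2}$ and combining the uniform $N^{s,\gamma}$ bound with the strong $L^2$ convergence shows $\ang{Kg_n, g_n} \to \ang{Kg_0, g_0}$. Hence $\ang{Lg_0, g_0} \le \liminf_n \ang{Lg_n, g_n} = 0$, so by Step 1 $g_0 \in \mathcal{N}$, i.e.\ $g_0 = \mathbf{P}g_0 = 0$, contradicting $|g_0|_{L^2} = 1$. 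The principal obstacle is the last continuity/semicontinuity step, which is exactly where the careful structural split $L = N + K$ and the compact embedding $N^{s,\gamma} \hookrightarrow L^2$ pay off; both of these ingredients are nontrivial outputs of the preceding sections rather than standard facts.
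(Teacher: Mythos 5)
Your architecture is the same as the paper's: split $L=N+K$, use the identity for $\ang{Ng,g}$ together with Lemma \ref{estNORM3} and \eqref{compactupper} to get a G{\aa}rding-type bound $\ang{Lg,g}\ge c_1|g|_{N^{s,\gamma}}^2-c_2|g|_{L^2}^2$, and then run a compactness/contradiction argument resting on the compact embedding $N^{s,\gamma}\hookrightarrow L^2$ from Section \ref{sec:funcN}. The differences are cosmetic: you normalize $|g_n|_{L^2}=1$ instead of $\ang{Ng_n,g_n}=1$, and you invoke weak lower semicontinuity of $\ang{N\cdot,\cdot}$, which is legitimate since by \eqref{normupper} this is a bounded nonnegative quadratic form on $N^{s,\gamma}$. (Strictly, the negation of the lemma only gives $\ang{Lg_n,g_n}<\tfrac1n\,|g_n|_{N^{s,\gamma}}^2$; you should first use your Step 2 bound to get $|g_n|_{N^{s,\gamma}}\lesssim 1$ and only then conclude $\ang{Lg_n,g_n}\to 0$, but this is a wording issue, not a mathematical one.)

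One step fails as justified: the ``polarized'' bound $|\ang{K\varphi,\psi}|\lesssim \eta|\varphi|_{L^2_{\gamma+2s}}|\psi|_{L^2_{\gamma+2s}}+C_\eta|\varphi|_{L^2}|\psi|_{L^2}$ does not follow from \eqref{compactupper}. Polarizing a diagonal bound (even after rescaling $\varphi\mapsto t\varphi$, $\psi\mapsto t^{-1}\psi$ and optimizing in $t$) unavoidably produces mixed terms of size $\sqrt{\eta C_\eta}\,|\varphi|_{L^2}|\psi|_{L^2_{\gamma+2s}}$, which cannot be made small since $C_\eta\to\infty$ as $\eta\to0$; and these are exactly the dangerous terms, because in the cross terms $\ang{Kg_0,g_n-g_0}$ and $\ang{K(g_n-g_0),g_0}$ the quantity $|g_n-g_0|_{L^2_{\gamma+2s}}$ is only bounded, not small, so strong $L^2$ convergence alone does not kill them. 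The repair is exactly the paper's: since $|f|_{\eta,\delta}\lesssim|f|_{N^{s,\gamma}}$ for fixed $\eta,\delta$, Lemma \ref{CompactEst} (together with the elementary bound for the $\nu_K$ multiplier) shows the bilinear form of $K$ is bounded on $N^{s,\gamma}\times N^{s,\gamma}$, so $\ang{Kg_0,\cdot}$ is a continuous linear functional on $N^{s,\gamma}$ and both cross terms vanish by the weak convergence $g_n\rightharpoonup g_0$ in $N^{s,\gamma}$ that you already extracted; the diagonal remainder $\ang{K(g_n-g_0),g_n-g_0}$ is then handled, as you do, by \eqref{compactupper} with the uniform $L^2_{\gamma+2s}$ bound and the strong $L^2$ convergence. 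With that one substitution your proof closes and coincides with the paper's.
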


The proof will follow directly from Lemma \ref{CompactEst} and the splitting $L = N + K$. 
The coercive estimate in Lemma \ref{lowerN} is proven via a contradiction argument.  This is the only location in our paper where a non-constructive argument is used. We expect that our estimates can aid in a future constructive--but perhaps substantially longer--proof of this coercivity; see \cite{MR2254617,MR2322149} and the references therein.

Throughout the remainder of the paper, we will abuse notation and write $|h|_{N^{s,\gamma}}^2$ from \eqref{normdef} in place of $\ang{N h,h}$ from \eqref{normpiece}; since these quantities have already been shown to be comparable, all of the previous inequalities involving $|\cdot|_{N^{s,\gamma}}$ will remain true after this redefinition at the price of a fixed multiplicative constant. 
 \\

\noindent {\it Proof of Lemma \ref{lowerN}.}    Most of this lemma is standard, see e.g. \cite{MR1379589} for proofs of the first statements in the cut-off case.
Without cut-off  the first statements can be established with the same proofs as in the cut-off situation.  This is done via  the usual approximations of the singular kernel \eqref{kernelQ} with a non-singular kernel $b_\epsilon(\cos\theta)$ and sending $\epsilon \downarrow 0$.  
  We only prove the coercive lower bound for the linear operator.  

 Assuming that coercivity fails grants a sequence of functions $h^n$ which satisfy 
$
{\bf P} h^n = 0,
$ 
$\nsm h^n \nsm_{N^{s,\gamma}}^2 = \langle h^n, h^n \rangle_{N^{s,\gamma}}  =  \langle N h^n, h^n \rangle  = 1$ and
$$
\langle L h^n, h^n \rangle
=
\nsm  h^n \nsm_{N^{s,\gamma}}^2 
- 
\langle {K} h^n, h^n \rangle
\le \frac{1}{n}.
$$ 
Thus $ \{ h^n \} $ is weakly compact in ${N^{s,\gamma}}$ with limit point $h^0$.  
By weak lower-semi continuity $\nsm h^0 \nsm_{N^{s,\gamma}}^2  \le 1$.  Furthermore,
$$
\langle L h^n, h^n \rangle
=
1
- 
\langle {K} h^n, h^n \rangle.
$$ 
We {\it claim} that 
$$
\lim_{n\to\infty}
\langle {K} h^n, h^n \rangle
=
\langle {K} h^0, h^0 \rangle.
$$
The claim will follow from the prior Lemma \ref{CompactEst}.   The claim implies
$$
0 = 1 - \langle {K} h^0, h^0 \rangle.
$$
Or equivalently
$$
\langle L h^0, h^0 \rangle
=
\nsm h^0 \nsm_{N^{s,\gamma}}^2 
- 
1.
$$
Since $L\ge 0$, we have $\nsm h^0 \nsm_{N^{s,\gamma}}^2
=
1$ which implies $h^0 = {\bf P} h^0$.  On the other hand since
 $h^n = \{{\bf I- P}\} h^n$ the weak convergence implies $h^0 = \{{\bf I- P}\} h^0$.
This  is a contradiction to $\nsm h^0 \nsm_{N^{s,\gamma}}^2
=
1$. 

It remains to establish the claim.  We expand out
\begin{align*}
\langle {K} h^n, h^n \rangle
-
\langle {K} h^0, h^0 \rangle
= &  
\langle {K} h^0, (h^n - h^0) \rangle \\ & + \ang{K (h^n - h^0),h^0} \\ & +
\langle {K} (h^n - h^0), (h^n-h^0) \rangle.
\end{align*}
We recall the definition \eqref{compactpiece} and the general estimate in Lemma \ref{CompactEst}; setting $\eta = \delta = 1$, it is easy to see that
\begin{gather*}
\left| \langle {K} g, h \rangle \right| 
 \lesssim 
\nsm g \nsm_{1,1} 
\nsm h \nsm_{1,1}.
\end{gather*}
Since $|f|_{\eta,\delta} \lesssim |f|_{N^{s,\gamma}}$ for any fixed $\eta,\delta$, it follows that $K$ is bounded on $N^{s,\gamma}$.  In particular then
\[ \ang{K h^0, (h^n - h^0)} \rightarrow 0, \]
as $n \rightarrow \infty$ by weak convergence. 
Since $\langle {K} g, h \rangle = \langle  g,  {K}h \rangle$, the same is true of the second term.
Finally, by \eqref{compactupper}, we have uniformly for all $\eta > 0$ that 
\begin{gather*}
\left| \langle {K} (h^n - h^0), (h^n-h^0) \rangle \right| 
 \lesssim 
\eta |h^n - h^0|_{L^2_{\gamma+2s}(\mathbb{R}^3_v)}^2 + C(\eta) |h^n -h^0|_{L^2_v}^2.
\end{gather*}
Since $|h^n - h^0|_{L^2_{\gamma+2s}} \le |h^n - h^0|_{N^{s,\gamma}} \leq 2$ for all $n$ and $N^{s,\gamma}$ embeds compactly in $L^2_v$ 
as in Section \ref{sec:funcN}, it follows that for any $\eta > 0$
\[ \limsup_{n \rightarrow \infty} \left| \langle {K} (h^n - h^0), (h^n-h^0) \rangle \right|  \lesssim \eta. \]
 In particular, this implies  the limit is zero and establishes the claim.
\qed \\

\subsection{Local Existence}  Local existence has been shown in the recent preprint \cite{arXiv:0909.1229v1}, with higher regularity assumptions on the initial data than we consider.  We establish the a priori bounds for local existence herein using our estimates above with initial data in the space $L^2_v H^N_x$.
Our local existence proof for \eqref{Boltz} is based on a uniform energy estimate for
an iterated sequence of approximate solutions.

Our iteration starts at $f^0(t,x,v)\eqdef f_0(x,v)$.  We   solve for $f^{n+1}(t,x,v)$ such that 
\begin{equation}
\left( \partial _t+v\cdot \nabla _x+N \right) f^{n+1}+Kf^n
=
\Gamma (f_{,}^nf^{n+1}),\;\;\;f^{n+1}(0,x,v)=f_0(x,v).  \label{approximate}
\end{equation}
For notational convenience during  the proof we 
define the ``dissipation rate'' as
\begin{gather}
\notag
\mathcal{D}(f(t))\eqdef \sum_{|\alpha |\le N}
\|\partial^\alpha f(t)\|_{N^{s,\gamma}}^2.
\end{gather}
We
will also use the following total norm
\begin{gather}
\mathcal{G}(f(t))
\eqdef
\|f(t)\|^2_{L^2(\mathbb{R}^3_v; H^N(\mathbb{T}^3_x))}+ \int_0^t d\tau ~\mathcal{D}(f(\tau)).
\label{totalG}
\end{gather} 
Our goal will be to obtain a uniform estimate for the iteration on 
a small time interval. The crucial energy estimate is as follows:

\begin{lemma}
\label{uniform}  The sequence \{$f^n(t,x,v)\}$ is well-defined. There exists a short time
$
T^{*} = T^{*}(\|f_0\|^2_{L^2_v H^N_x})>0,
$
such that for $\|f_0\|^2_{L^2_v H^N_x}$
sufficiently small, there is a uniform constant $C_0>0$ such that
\begin{equation}
\sup_{n\ge 0} ~
\sup_{0\le \tau \le T^{*}} ~ 
\mathcal{G}(f^{n}(\tau))
\le
2C_0
\|f_0\|^2_{L^2_v H^N_x}.  \label{uni}
\end{equation}
\end{lemma}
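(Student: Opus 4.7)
The plan is to prove Lemma \ref{uniform} by induction on $n$, with the base case $f^0 \equiv f_0$ satisfying the desired bound immediately upon choosing $C_0 \ge 1$. Suppose that on some interval $[0,T^*]$ the bound $\mathcal{G}(f^n(t)) \le 2 C_0 \|f_0\|^2_{L^2_v H^N_x}$ has been established. The equation \eqref{approximate} for $f^{n+1}$ is \emph{linear} in $f^{n+1}$, since $\Gamma(f^n,\cdot)$ is linear in its second slot, with leading dissipative part $\partial_t + v \cdot \nabla_x + N$. By Lemma \ref{estNORM3} combined with \eqref{normupper}, this part is coercive in the sense that $\ang{N g,g} \approx |g|_{N^{s,\gamma}}^{2}$. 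A standard Galerkin approximation in the velocity variable, together with the energy estimate carried out next, produces a unique solution $f^{n+1}$ in $L^{\infty}_{t} L^{2}_{v} H^{N}_{x} \cap L^{2}_{t} N^{s,\gamma}_{v} H^{N}_{x}$ on $[0, T^*]$.

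To derive the energy estimate, I apply $\partial^\alpha$ for $|\alpha|\le N$ to \eqref{approximate}, pair with $\partial^\alpha f^{n+1}$ in $L^2(\mathbb{T}^3_x \times \mathbb{R}^3_v)$, integrate by parts in $x$ to kill the transport term, and sum over $\alpha$. Using the coercivity of $N$ this gives
\begin{equation*}
\tfrac{1}{2}\tfrac{d}{dt}\|f^{n+1}\|_{L^2_v H^N_x}^2 + c_0 \mathcal{D}(f^{n+1}) \le \Big|\sum_{|\alpha|\le N}(\partial^\alpha K f^n,\partial^\alpha f^{n+1})\Big| + \Big|\sum_{|\alpha|\le N}(\partial^\alpha \Gamma(f^n, f^{n+1}),\partial^\alpha f^{n+1})\Big|.
\end{equation*}
For the compact contribution I apply Lemma \ref{CompactEst} via the $\|\cdot\|_{\eta,\delta}$-norms together with \eqref{expand} and a polarization, yielding
\begin{equation*}
\Big|\sum_{|\alpha|\le N}(\partial^\alpha K f^n,\partial^\alpha f^{n+1})\Big| \le \epsilon\big(\mathcal{D}(f^n)+\mathcal{D}(f^{n+1})\big) + C_{\epsilon}\big(\|f^n\|_{L^2_v H^N_x}^2 + \|f^{n+1}\|_{L^2_v H^N_x}^2\big).
\end{equation*}
For the nonlinear term, the second inequality of Lemma \ref{NonLinEst} followed by Cauchy--Schwarz gives
\begin{equation*}
\Big|\sum_{|\alpha|\le N}(\partial^\alpha \Gamma(f^n,f^{n+1}),\partial^\alpha f^{n+1})\Big| \lesssim \|f^n\|_{L^2_v H^N_x}\mathcal{D}(f^{n+1}) + \epsilon\mathcal{D}(f^{n+1}) + C_{\epsilon}\mathcal{D}(f^n)\|f^{n+1}\|_{L^2_v H^N_x}^2.
\end{equation*}

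Integrating in time from $0$ to $t\le T^*$, using the induction hypothesis to bound $\int_0^t \mathcal{D}(f^n) \le 2C_0\|f_0\|^2_{L^2_v H^N_x}$ and $\sup_{s\le t}\|f^n\|_{L^2_v H^N_x}\le \sqrt{2C_0}\|f_0\|_{L^2_v H^N_x}$, choosing $\epsilon$ small, and taking $\|f_0\|_{L^2_v H^N_x}$ sufficiently small so that the coefficient of $\mathcal{D}(f^{n+1})$ on the right can be absorbed into $c_0\mathcal{D}(f^{n+1})$ on the left, I arrive at an inequality of the form
\begin{equation*}
\mathcal{G}(f^{n+1}(t)) \le \|f_0\|^2_{L^2_v H^N_x} + C_{*}\|f_0\|^2_{L^2_v H^N_x}\, \mathcal{G}(f^{n+1}(t)) + C_{\epsilon}(1+T^{*})\int_0^t \|f^{n+1}\|_{L^2_v H^N_x}^2 \, d\tau,
\end{equation*}
where $C_*$ depends on $C_0$. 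Once $\|f_0\|_{L^2_v H^N_x}$ is small enough that $C_*\|f_0\|^2_{L^2_v H^N_x}\le 1/2$, a Gr\"onwall argument over the short interval $[0,T^*]$ with $T^*$ taken small (depending only on $\|f_0\|^2_{L^2_v H^N_x}$ through $C_0$) yields $\mathcal{G}(f^{n+1}(t)) \le 2C_0\|f_0\|^2_{L^2_v H^N_x}$, closing the induction.

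The crux of the argument is that Lemma \ref{NonLinEst} decorates the factor $\|f^{n+1}\|_{N^{s,\gamma}}$ only with the un-smoothed norm $\|f^n\|_{L^2_v H^N_x}$ (together with one term carrying $\|f^n\|_{L^2_{\gamma+2s}H^N_x}$, which is dominated by $\mathcal{D}(f^n)^{1/2}$); this asymmetry in the roles of the two slots of $\Gamma$ is what allows the offending nonlinear term to be absorbed into the dissipation produced by $N$ whenever $\|f_0\|_{L^2_v H^N_x}$ is small. Without this precise structure, in which the $g$-slot of $\Gamma(g,h)$ requires no velocity smoothness, the nonlinear coupling between consecutive iterates could not be closed and no uniform-in-$n$ bound would be available.
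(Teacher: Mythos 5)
Your proposal is correct and follows essentially the same route as the paper: induction on $n$, the $H^N_x$ energy estimate obtained by pairing $\partial^\alpha$ of \eqref{approximate} with $\partial^\alpha f^{n+1}$, the nonlinear term controlled by the second estimate of Lemma \ref{NonLinEst}, the $K f^n$ term by Lemma \ref{CompactEst}, and closure by choosing the small parameter, $T^*$, and $\|f_0\|_{L^2_vH^N_x}$ small. The only (harmless) cosmetic differences are that you apply Young's inequality pointwise in time and finish with Gr\"onwall, and take the small parameter on both slots of the $K$-estimate, whereas the paper keeps the trilinear product, uses $\delta=\tfrac12$ for the new iterate, and absorbs $\sup_\tau \mathcal{G}(f^{n+1})$ directly into the left-hand side.
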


\begin{proof}
The proof proceeds with an induction over $k$. Clearly $k=0$ is true and we
assume that (\ref{uni}) is valid for $k=n$.  
For a given $f^{n}$, it is standard to show that there exists a solution
 $f^{n+1}$ to the linear equation \eqref{approximate}.
We focus here on the proof of (\ref{uni}).
Take the spatial derivatives $\partial ^\alpha $ of (\ref{approximate}) to obtain
\begin{equation}
\left( \partial _t+v\cdot \nabla _x\right)\partial ^\alpha f^{n+1}
+N\left(\partial^\alpha f^{n+1}\right)
+K\left(\partial ^\alpha f^n\right)
=
\partial ^\alpha \Gamma \left(f^n, f^{n+1}\right).  \label{xderi}
\end{equation}
Therefore, applying the non-linear estimate in Lemma \ref{NonLinEst}  yields 
\begin{gather*}
\frac 12
\frac d{dt}\|\partial^\alpha f^{n+1}\|_{L^2_vL^2_x}^2
+\| \partial^\alpha f^{n+1} \|_{L^2_x N^{s,\gamma} }^2
+(K\left(\partial ^\alpha f^n\right),\partial ^\alpha
f^{n+1})
\\
=(\partial ^\alpha \Gamma \left(f_{,}^nf^{n+1}\right),\partial ^\alpha f^{n+1})
\\
\lesssim
\|f^n\|_{H^N_{x} L^2_v  }
 \|f^{n+1}\|_{H^N_{x} N^{s,\gamma} }^2
 +
\|f^n\|_{H^N_{x} N^{s,\gamma}   }
 \|f^{n+1}\|_{H^N_{x} N^{s,\gamma} }
 \|f^{n+1}\|_{H^N_{x} L^2_v  }.
\end{gather*}
Then integrating the above over $[0,t]$ we obtain
\begin{gather}
\frac 12\|\partial ^\alpha f^{n+1}(t)\|_{L^2_vL^2_x}^2
+
\int_0^t~d\tau~ \| \partial^\alpha f^{n+1}(\tau) \|_{L^2_x N^{s,\gamma} }^2
+
\int_0^t ~d\tau~ (K\left(\partial ^\alpha f^n \right) ,\partial ^\alpha f^{n+1})  
\notag
\\
\label{xterm} 
\le \frac 12 \|\partial ^\alpha f_0\|_{L^2_vL^2_x}^2
+
C \int_0^t ~d\tau~
\|f^n\|_{H^N_{x} L^2_v  }
 \|f^{n+1}\|_{H^N_{x} N^{s,\gamma} }^2(\tau)
 \\
 +
 C \int_0^t ~d\tau~
\|f^n\|_{H^N_{x} N^{s,\gamma}   }
 \|f^{n+1}\|_{H^N_{x} N^{s,\gamma} }
 \|f^{n+1}\|_{H^N_{x} L^2_v  }(\tau). 
\nonumber
\end{gather}
We notice that from Lemma \ref{CompactEst} applied to \eqref{compactpiece}, for any $\eta
>0 $ small and $\delta = 1/2$, 
\begin{gather*}
\left| \int_0^t d\tau (K(\partial ^\alpha f^n),\partial ^\alpha f^{n+1}) 
\right|
\le \int_0^t d\tau
\left( \frac 12 \|\partial ^\alpha f^{n+1}(\tau)\|_{L^2_{\gamma + 2s}}^2
+
C_\eta\|\partial ^\alpha f^{n+1}(\tau)\|_{L^2_{x,v}}^2 \right)
\\
+\eta \int_0^t ~ d\tau ~\|\partial ^\alpha f^n (\tau)\|_{L^2_{\gamma + 2s}}^2
+C \int_0^t ~ d\tau ~ \| \partial ^\alpha f^n(\tau)\|_{L^2_{x,v}}^2.
\end{gather*}
We incorporate this inequality into \eqref{xterm} and sum over $|\alpha | \le N$ to obtain 
\begin{gather}
\mathcal{ G}(f^{n+1}(t)) 
\le
C_0\| f_0 \|_{L^2_v H^N_x}^2
+
\int_0^t ~ d\tau ~ \left\{ C\| f^{n+1}\|^2_{H^N_{x} L^2_v }(\tau)
+
C\eta \| f^n\|_{H^N_{x}L^2_{v,\gamma + 2s}}^2(\tau) \right\}
\nonumber 
\\
+
 C_\eta \int_0^t ~ d\tau ~ \| f^n\|^2_{H^N_{x} L^2_v  }(\tau)
+
C\sup_{0\le \tau\le t}\mathcal{ G}(f^{n+1}(\tau))\sup_{0\le \tau\le t}\mathcal{ G}^{1/2}(f^n(\tau))  
\notag
\\
\le 
C_0\| f_0 \|_{L^2_v H^N_x}^2+C_\eta  t\left\{ \sup_{0\le \tau\le t}\mathcal{ G}(f^{n+1}(\tau))
+\sup_{0\le \tau\le t}\mathcal{ G}(f^n(\tau))\right\} 
\nonumber 
\\
+C\eta \sup_{0\le \tau\le t}\mathcal{ G}(f^n(\tau))  
+C\sup_{0\le \tau\le t}\mathcal{ G}(f^{n+1}(\tau))\sup_{0\le \tau\le t}\mathcal{ G}^{1/2}(f^n(\tau)).  
\nonumber
\end{gather}
We are using the total norm from \eqref{totalG}.
By the induction hypothesis \eqref{uni}
$$
\sup_{0\le \tau \le t}\mathcal{ G}(f^n(\tau))\le 2C_0\| f_0 \|_{L^2_v H^N_x}^2.
$$ 
Then we collect terms in the previous inequality to obtain
\begin{gather*}
\left\{ 1-C_\eta T^{*}-C\| f_0 \|_{L^2_v H^N_x}\right\}\sup_{0\le t\le T^{*}}\mathcal{ G}%
(f^{n+1}(t))
\\
\le
 \left\{C_0+2C\eta +2C_\eta T^{*}C_0\right\}\| f_0 \|_{L^2_v H^N_x}^2. 
\end{gather*}
By choosing $\eta $ small, then choosing $T^{*}=T^{*}(\| f_0 \|_{L^2_v H^N_x})$ small, we have 
\[
\sup_{0\le t\le T^{*}}\mathcal{ G}(f^{n+1}(t))\le 2C_0\| f_0 \|_{L^2_v H^N_x}^2.
\]
We therefore conclude Lemma \ref{uniform} if $T^{*}$ and $\| f_0 \|_{L^2_v H^N_x}^2$ are sufficiently small. 
\end{proof}

With our uniform control over the iteration from \eqref{approximate} proved in Lemma \ref{uniform}, we can now prove local existence in the following Theorem.

\begin{theorem}(Local Existence)
\label{local}
For any sufficiently small $M_0>0,$ there exists a time  $T^{*} = T^{*}(M_0)>0$ and $%
M_1>0,$ such that if 
\[
\| f_0 \|_{L^2_v H^N_x}^2\le M_1, 
\]
then there is a unique  solution $f(t,x,v)$ to \eqref{Boltz} on  
$[0,T^{*})\times \mathbb{T}_x^3\times \mathbb{ R}_v^3$ such that 
\[
\sup_{0\le t\le T^{*}}\mathcal{ G}(f(t))\le M_0. 
\]
Furthermore $\mathcal{ G}(f(t))$ is continuous over $[0,T^{*}).$ Lastly, we have positivity in the sense that
 if
 $F_0(x,v)=\mu +\mu
^{1/2}f_0\ge 0,$ then 
$
F(t,x,v)=\mu +\mu ^{1/2}f(t,x,v)\ge 0. 
$
\end{theorem}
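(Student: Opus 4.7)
The plan is to extract a limit from the iterates $\{f^n\}$ constructed in Lemma \ref{uniform}, identify it as the unique strong solution on $[0,T^*)$, establish time-continuity of $\mathcal{G}(f(t))$, and close with a separate positivity-preserving scheme.

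First I would establish a \emph{contraction estimate} for the differences $d^n \eqdef f^{n+1} - f^n$. Subtracting \eqref{approximate} at consecutive indices gives
\begin{equation*}
(\partial_t + v\cdot\nabla_x + N)\,d^n + K d^{n-1} = \Gamma(d^{n-1}, f^{n+1}) + \Gamma(f^{n-1}, d^n), \qquad d^n(0)=0.
\end{equation*}
Pairing with $d^n$ in $L^2(\mathbb{T}^3_x\times\mathbb{R}^3_v)$, the transport term vanishes by periodicity, the coercive bound from Lemma \ref{estNORM3} contributes a $\|d^n\|_{L^2_x N^{s,\gamma}}^2$ dissipation, and Lemma \ref{sharpLINEAR} controls the cross term $(K d^{n-1},d^n)$ by $\eta\,\|d^{n-1}\|_{L^2_x L^2_{\gamma+2s}}^2 + C_\eta \|d^{n-1}\|_{L^2_{x,v}}^2$ plus an analogous $d^n$ contribution that the dissipation absorbs. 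For the nonlinear terms I would integrate Lemma \ref{NonLinEst} in $x$, using the Sobolev embedding $L^\infty(\mathbb{T}^3_x) \supset H^N(\mathbb{T}^3_x)$ (valid since $\NgE$) to place the high-Sobolev norm on $f^{n\pm 1}$, which is bounded by $\sqrt{2C_0}\,\|f_0\|_{L^2_v H^N_x}$ thanks to Lemma \ref{uniform}. Choosing $\eta$ and $T^*$ small and exploiting the smallness of $\|f_0\|_{L^2_v H^N_x}$, a Gr\"onwall step produces a strict contraction for $\sup_{[0,T^*]}\|d^n\|_{L^2_{x,v}}^2 + \int_0^{T^*} \|d^n\|_{L^2_x N^{s,\gamma}}^2\,d\tau$, so $\{f^n\}$ is Cauchy and converges strongly to some $f$ in this norm.

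Next I would identify $f$ as a weak solution and upgrade its regularity. The uniform bound of Lemma \ref{uniform} gives, via Banach--Alaoglu, weak-$*$ convergence in $L^\infty_t L^2_v H^N_x$ and weak convergence in $L^2_t N^{s,\gamma}_v H^N_x$ along a subsequence; Lemma \ref{NonLinEst} furnishes the continuity of $\Gamma$ needed to pass to the limit in the weak formulation of \eqref{approximate}. Uniqueness of the $L^2$-limit combined with weak lower semicontinuity transfers the energy bound to $f$, yielding $\sup_{[0,T^*]}\mathcal{G}(f(t))\le 2C_0\|f_0\|_{L^2_v H^N_x}^2\le M_0$ provided $M_1 = M_0/(2C_0)$. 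Uniqueness for \eqref{Boltz} at fixed initial data follows by replaying the contraction argument on the difference of two candidate solutions. Continuity of $t\mapsto\mathcal{G}(f(t))$ is obtained from the energy identity produced by summing over $|\alpha|\le N$ the pairings of $\partial^\alpha$ of \eqref{Boltz} with $\partial^\alpha f$: Lemmas \ref{sharpLINEAR} and \ref{NonLinEst} show its right-hand side is locally integrable in $t$, forcing absolute continuity of the $L^\infty_t$-part of $\mathcal{G}$ and continuity of the cumulative dissipation.

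For positivity I would run a parallel, sign-preserving iteration for $F^n \eqdef \mu + \sqrt{\mu} f^n$. After approximating $B$ by bounded cutoff kernels $B_\epsilon$ to give sense to the gain and loss terms individually, the scheme
\begin{equation*}
\partial_t F^{n+1}_\epsilon + v\cdot\nabla_x F^{n+1}_\epsilon + A_\epsilon(F^n_\epsilon)\,F^{n+1}_\epsilon = \mathcal{Q}^+_\epsilon(F^n_\epsilon, F^n_\epsilon), \qquad A_\epsilon(F)(v) \eqdef \int dv_*\,d\sigma\,B_\epsilon F_*,
\end{equation*}
admits a Duhamel representation along the linear characteristics $x\mapsto x-vt$ that makes $F^{n+1}_\epsilon\ge 0$ immediate whenever $F_0,F^n_\epsilon\ge 0$. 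Passing $\epsilon\to 0$, then $n\to\infty$ via the same Cauchy mechanism as above, and invoking the uniqueness just established, identifies the limit with $F=\mu+\sqrt{\mu} f$ and transfers the sign. The main obstacle is the contraction step: unlike Lemma \ref{uniform}, where all three slots of $\Gamma$ may carry $N$ derivatives, here one of $d^n,d^{n-1}$ must be controlled at the $L^2_{x,v}$ level with no derivatives, so that only the asymmetric weight distribution in Lemma \ref{NonLinEst}, combined with the \emph{smallness} (not mere boundedness) of $\|f_0\|_{L^2_v H^N_x}$, can close the loop; the non-cutoff singularity of $\mathcal{Q}^-$ is the analogous secondary obstacle on the positivity side, which the $B_\epsilon$ approximation is there to resolve.
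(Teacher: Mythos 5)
Your treatment of existence, the uniform bound, uniqueness, and continuity of $\mathcal{G}$ is sound and runs essentially parallel to the paper: the paper extracts the solution from the compactness furnished by Lemma \ref{uniform} and then proves uniqueness by exactly the difference-equation estimate you propose (Lemma \ref{NonLinEst} with the embedding $H^2(\mathbb{T}^3_x)\subset L^\infty(\mathbb{T}^3_x)$, Cauchy--Schwartz in time, the lower bound for $L$, and Gronwall), and it obtains continuity of $\mathcal{G}$ by integrating the energy identity from $t_2$ to $t_1$; your contraction on $d^n=f^{n+1}-f^n$ is just a more explicit version of the same mechanism. One small imprecision: the cross term $(K d^{n-1},d^n)$ is not covered by \eqref{compactupper} of Lemma \ref{sharpLINEAR}, which is diagonal; you need the bilinear estimate of Lemma \ref{CompactEst} (as the paper uses in Lemma \ref{uniform}, with $\eta$ small and $\delta=1/2$), which is readily available.

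The genuine gap is in the positivity argument. The paper does not construct a sign-preserving scheme at all: it quotes the positivity result of \cite{arXiv:0909.1229v1} for smoother initial data ($H^M_{x,v}$, $M\ge 5$) and moderate singularities $0<s<1/2$, identifies that solution with its own by the uniqueness theorem, and then passes to $L^2_vH^N_x$ data by density (for $1/2\le s<1$ it must further invoke the higher-derivative estimates of \cite{sgNonCut2}). Your proposed route --- iterate on $F^{n+1}_\epsilon$ with the gain/loss splitting for a cutoff kernel $B_\epsilon$ and a Duhamel representation, then let $\epsilon\to 0$ and $n\to\infty$ --- founders precisely where the non-cutoff difficulty lives: since $b$ is non-integrable on $\mathbb{S}^2$, both $A_\epsilon(F^n_\epsilon)$ and $\mathcal{Q}^+_\epsilon(F^n_\epsilon,F^n_\epsilon)$ diverge individually as $\epsilon\to 0$ at fixed $n$, so the stated order of limits is not meaningful, and even in the other order one must prove that the cutoff solutions $F_\epsilon$ converge to the non-cutoff solution with bounds strong enough to place the limit in the uniqueness class and transfer the sign. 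That is the convergence of Grad's cutoff approximation in this functional framework, a substantive step (known, e.g., only in Ukai's Gevrey setting \cite{MR839310}) which your proposal does not supply; the phrase ``via the same Cauchy mechanism as above'' does not cover it, because the energy estimates of the paper exploit the cancellation structure of the full singular kernel rather than a gain/loss decomposition, and their constants are not obviously uniform in a way that controls $f_\epsilon-f$. Either supply uniform-in-$\epsilon$ stability for the cutoff approximations and a limit identification, or argue as the paper does via the known positivity result plus uniqueness and density.
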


\begin{proof}By taking $n\rightarrow \infty ,$ we have shown sufficient compactness from Lemma \ref{uniform} to obtain a strong solution $f(t,x,v)$ to the Boltzmann equation \eqref{Boltz} locally in time. 

To prove the uniqueness, we suppose that there exists another solution $g$ with the same initial data satisfying 
$\sup_{0\le \tau \le T^{*}}\mathcal{ G}(g(\tau))\le M_0.$ The difference $f-g$
satisfies 
\begin{equation}
\{\partial _t+v\cdot \nabla _x\}\left(f-g\right)+L\left(f-g\right)=\Gamma \left(f-g,f\right)+\Gamma \left(g,f-g\right).
\label{difference}
\end{equation}
We apply Lemma \ref{NonLinEst} and the Sobolev embedding  $H^2(\mathbb{T}^3_x) \subset L^\infty(\mathbb{T}^3_x) $ to obtain
\begin{gather*}
\left| \left( \left\{\Gamma\left(f-g,f\right)+\Gamma\left(g,f-g\right)\right\},f-g\right) \right|
\lesssim
\left\{\| g\|_{L^2_v H^2_x}+
\|f\|_{L^2_v H^2_x}\right\}
\|f-g\|_{N^{s,\gamma}} ^2
\\
+
\left\{\| g\|_{H^2_x  N^{s,\gamma}}+
\|f\|_{H^2_x  N^{s,\gamma}}\right\}
\|f-g\|_{N^{s,\gamma}} 
\|f-g\|_{L^2_{v,x}}. 
\end{gather*}
The Cauchy-Schwartz inequality (applied in the time variable) shows us that 
\begin{gather*}
\int_0^t d\tau 
\left\{\| g\|_{H^2_x  N^{s,\gamma}}+
\|f\|_{H^2_x  N^{s,\gamma}}\right\}
\|f-g\|_{N^{s,\gamma}} 
\|f-g\|_{L^2_{v,x}}(\tau)
\\
\le
\sqrt{M_0}
\left(
\sup_{0\le \tau \le t}
\|f(\tau)-g(\tau)\|_{L^2_{v,x}}^2 
\int_0^t d\tau 
~ \|f(\tau)-g(\tau)\|_{N^{s,\gamma}} ^2
\right)^{1/2}.
\end{gather*}
We have just used the following fact, 
which follows from the local existence, that
\[
\sup_{0\le \tau \le t}
\|f(\tau)\|_{L^2_v H^2_x}
+
\int_0^t ~ d\tau ~ \| f(\tau)\|_{H^2_x N^{s,\gamma}}^2
\le M_0.
\]
And similarly for $g(t)$.
Since $L=N+K$,  we use Lemma \ref{CompactEst} applied to \eqref{compactpiece}
to obtain
\[
(L(f-g),f-g)\ge \frac 12\|f-g\|_{N^{s,\gamma}} ^2-C\|f-g\|_{L^2_{v,x}}^2.
\]
We  multiply \eqref{difference} with $f-g$ and integrate over 
$[0,t]\times \mathbb{T}_x^3\times \mathbb{ R}_v^3$ to achieve 
\begin{gather*}
\frac{1}{2}\|f(t)-g(t)\|_{L^2_{v,x}} ^2+\frac{1}{2}\int_0^t ~ d\tau~ \|f(\tau)-g(\tau)\|_{N^{s,\gamma}} ^2
\\
\lesssim 
\sqrt{M_0}
\left(
\sup_{0\le \tau \le t}
\|f(\tau)-g(\tau)\|_{L^2_{v,x}} ^2
+
\int_0^t ~ d\tau~ \|f(\tau)-g(\tau)\|_{N^{s,\gamma}} ^2
\right)
\\
+
\int_0^t~ d\tau~ \|f(\tau)-g(\tau)\|_{L^2_{v,x}}^2.
\end{gather*}
We deduce $f \equiv g$ and the uniqueness from the Gronwall inequality.

To show continuity of $\mathcal{ G}(f(t))$ in time, we  sum 
\eqref{xterm} over $|\alpha| \le N$ and integrate from $t_2$ to $t_1$ (rather than over $[0,t]$).  Then with $f^n=f^{n+1}=f$ we obtain 
\begin{gather*}
\left| \mathcal{ G}(f(t_1))-\mathcal{ G}(f(t_2)) \right|
=
\left| \frac 12\| f(t_1)\|_{L^2_v H^N_x}^2
-
\frac 12\| f(t_2)\|_{L^2_v H^N_x}^2
+
\int_{t_2}^{t_1} ~ d\tau ~\mathcal{D}(f(\tau)) \right| 
\\
\lesssim \left\{1+\sup_{{t_2}\le \tau \le {t_1}}\sqrt{\mathcal{ G}(f(\tau ))}
\right\}
\int_{t_2}^{t_1}
~ d\tau ~ 
\| f(\tau )\|_{H^N_x N^{s,\gamma}}^2
\rightarrow 0,
\end{gather*}
as ${t_1}\rightarrow {t_2}$ since 
$
\| f(\tau )\|_{H^N_x N^{s,\gamma}}^2
$ 
is
integrable in time.

We now explain the proof of positivity. Previous works which obtain the positivity of solutions without cut-off, to our knowledge, are only \cite{MR839310} and \cite{arXiv:0909.1229v1}.  We  use the argument from \cite{arXiv:0909.1229v1}, however their initial data is somewhat smoother than ours, e.g. they effectively work in $f_0\in H^{M}_{x,v}$ for $M\ge 5$, since $F_0 = \mu + \sqrt{\mu} f_0$, and they study moderate angular singularities $0<s<1/2$.  If our initial data is in $H^{M}_{x,v}$, then 
since we have proven the uniqueness, we conclude that
$F = \mu + \sqrt{\mu} f \ge 0$ if initially
$F_0 = \mu + \sqrt{\mu} f_0 \ge 0$.
The argument is finished by using the density of $H^{M}_{x,v}$ in the larger space $L^2_v H^N_x(\mathbb{T}^3_x\times \mathbb{R}^3_v)$, standard approximation arguments, and our uniqueness theorem.  For the high singularities, 
$1/2\le s<1$, the positivity can be established by using  high derivative estimates  $f(t) \in H^{M}_{x,v}$ from  \cite{sgNonCut2},  and following the same procedure \cite{arXiv:0909.1229v1} as in the low singularity case. 
\end{proof}

\subsection{Coercivity estimates for solutions to the Non-Linear equation}
The following is a by now well known statement of the Linearized H-Theorem \cite{MR2000470}; we prove it for the first time in the regime where there is no angular cut-off, e.g. \eqref{kernelQ}.

\begin{theorem}
\label{positive}  
Given the initial data $f_0 \in  L^2\left(\mathbb{R}^3_v : H^N\left(\mathbb{T}^3_x \right)\right)$ for some $\NgE$,
which satisfies \eqref{conservation} initially and the assumptions of Theorem \ref{local}. 
Consider the corresponding solution, $f(t,x,v)$, to \eqref{Boltz} which continues to satisfy \eqref{conservation}.

There exists a small constant $M_0 >0$ such that if
\begin{gather}
\| f(t) \|^2_{L^2_vH^N_x} \le M_0,
\label{smallL}
\end{gather}
then, further, there are universal constants $\delta>0$ and $C_2>0$ such that
$$
\sum_{|\alpha| \le N} \| \{ {\bf I - P } \} \partial^\alpha f \|_{_{N^{s,\gamma}}}^2(t)
\ge 
\delta
\sum_{|\alpha| \le N} \| { \bf  P  } \partial^\alpha f \|_{_{N^{s,\gamma}}}^2(t) - C_2\frac{d\mathcal{I}(t)}{dt},
$$
where $\mathcal{I}(t)$ is the ``interaction functional'' defined precisely in \eqref{mainINTERACTION} below.
\end{theorem}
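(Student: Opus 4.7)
The plan is to implement Guo's macroscopic/microscopic decomposition strategy, adapted to the anisotropic norm $N^{s,\gamma}$. The starting observation is that for any multi-index $\alpha$, the macroscopic piece ${\bf P}\partial^\alpha f$ is a linear combination of fixed Schwartz functions in $v$ whose coefficients depend only on $(t,x)$, so one has the equivalence
\[ \| {\bf P} \partial^\alpha f \|_{N^{s,\gamma}}^2 \approx \| \partial^\alpha a^f \|_{L^2(\mathbb{T}^3_x)}^2 + \sum_j \| \partial^\alpha b^f_j \|_{L^2(\mathbb{T}^3_x)}^2 + \| \partial^\alpha c^f \|_{L^2(\mathbb{T}^3_x)}^2. \]
Thus it suffices to bound the $L^2_x$ norms of spatial derivatives of the hydrodynamic coefficients $(a^f, b^f_j, c^f)$ from \eqref{hydro}.

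Next I would substitute \eqref{hydro} into \eqref{Boltz} and test against the basis of $\mathcal{N}$ from \eqref{null} together with the orthogonal higher moments $(v_iv_j - \tfrac{1}{3}\delta_{ij}|v|^2)\sqrt{\mu}$ and $(|v|^2 - 5)v_i \sqrt{\mu}$. This produces the macroscopic system: local conservation laws of the schematic form $\partial_t a + \nabla_x\cdot b = \ell_a$, $\partial_t b + \nabla_x(a+5c) = \nabla_x\cdot\Theta[\{{\bf I-P}\}f] + \ell_b$, and $\partial_t c + \tfrac{1}{3}\nabla_x\cdot b = \nabla_x\cdot\Lambda[\{{\bf I-P}\}f] + \ell_c$, coupled with elliptic-type identities in which $\nabla_x(a,b,c)$ are expressed as $\partial_t$ of moments of $\{{\bf I-P}\}f$ plus divergences of further such moments plus nonlinear residuals. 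Since each test function is a fixed Schwartz weight in $v$, the microscopic moments on the right sides are bounded pointwise-in-$x$ by $|\{{\bf I-P}\}\partial^\alpha f|_{L^2_v}$, which is in turn controlled by $|\{{\bf I-P}\}\partial^\alpha f|_{N^{s,\gamma}}$. The nonlinear residuals $\ell_a, \ell_b, \ell_c$ are inner products of $\Gamma(f,f)$ against fixed Schwartz weights and are absorbed by combining Lemma \ref{NonLinEst} with the smallness hypothesis \eqref{smallL}. Furthermore, the conservation laws \eqref{conservation} force $\int_{\mathbb{T}^3_x}(a^f, b^f, c^f)\,dx = 0$ for all $t\ge 0$, so a Poincar\'e inequality on $\mathbb{T}^3$ reduces control of $(a^f, b^f, c^f)$ in $L^2_x$ to control of their spatial gradients.

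To eliminate the $\partial_t$ terms on the right sides of the elliptic identities without invoking estimates on $\partial_t f$, I would, following \cite{MR2095473,Jang2009VMB,MR2420519}, define the interaction functional $\mathcal{I}(t)$ as a sum over $|\alpha|\le N-1$ of $L^2_x$-pairings between auxiliary elliptic potentials (for instance, $\phi^\alpha_c$ solving $-\Delta_x \phi^\alpha_c = \partial^\alpha c^f$) and carefully chosen microscopic moments of $\partial^\alpha f$. The pairings are arranged so that when one differentiates $\mathcal{I}(t)$ in time and substitutes the evolution equation \eqref{Boltz}, the resulting expression is, up to sign, precisely the macroscopic quadratic $\sum_{|\alpha|\le N}\|\partial^\alpha(a^f,b^f,c^f)\|_{L^2_x}^2$, plus a controllable microscopic term bounded by $\|\{{\bf I-P}\}f\|^2_{H^N_x N^{s,\gamma}}$, plus a cubic error absorbed via \eqref{smallL} and Lemma \ref{NonLinEst}. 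It is also important that $|\mathcal{I}(t)| \lesssim \|f(t)\|^2_{L^2_v H^N_x}$, so that $\mathcal{I}$ enters as a harmless lower-order correction when later integrated against the basic energy.

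The main obstacle is the careful design of $\mathcal{I}(t)$ so that every term produced by $d\mathcal{I}/dt$ is either of the desired macroscopic form, or dominated by $\|\{{\bf I-P}\}\partial^\alpha f\|_{N^{s,\gamma}}^2$, or absorbable into the nonlinearity via the smallness assumption; in particular the anisotropic character of $N^{s,\gamma}$ must not obstruct the placement of velocity moments in the pairings. Once $\mathcal{I}(t)$ is in place and the elliptic/Poincar\'e argument yields the macroscopic lower bound, the microscopic coercivity from Lemma \ref{lowerN} applied termwise to each $\partial^\alpha f$ completes the proof, with $\delta$ taken to be a suitably small multiple of the constant $\delta_0$ furnished by that lemma.
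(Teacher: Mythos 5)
Your outline is correct and is, at bottom, the same argument the paper gives: expand \eqref{Boltz} against the thirteen moments \eqref{base} to get the macroscopic equations \eqref{c}--\eqref{adot}, use the local conservation laws \eqref{cl.0}--\eqref{cl.2} to express $\partial_t(a,b,c)$ through spatial derivatives of microscopic moments (so no estimate on $\partial_t f$ is ever needed), bound those moments by $\|\{{\bf I-P}\}\partial^\alpha f\|_{L^2_{\gamma+2s}}\lesssim \|\{{\bf I-P}\}\partial^\alpha f\|_{N^{s,\gamma}}$, absorb the $\Gamma$-coefficients using \eqref{smallL}, and recover the zero-order part of $(a,b,c)$ from Lemma \ref{average} plus Poincar\'e. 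Where you genuinely deviate is the construction of $\mathcal{I}$: you propose Duan-style pairings against elliptic potentials such as $(-\Delta_x)^{-1}\partial^\alpha c$, whereas the paper's \eqref{mainINTERACTION} is built from direct pairings like $\mathcal{I}^\alpha_a=\int(\nabla\cdot\partial^\alpha b)\,\partial^\alpha a+\sum_i\int\partial_i\partial^\alpha r_{bi}\,\partial^\alpha a$, obtained simply by pulling $d/dt$ outside the offending terms in the tested macroscopic equations; the paper's version needs no inversion of the Laplacian, while yours requires the mean-zero property already at the level of defining the potentials -- both work on $\mathbb{T}^3$. Two further remarks: the $\nabla\partial^\alpha b$ bound is not automatic from \eqref{bi}--\eqref{bij} separately; one needs the specific elliptic combination $\Delta\partial^\alpha b_i=-\partial_i\partial_i\partial^\alpha b_i+\cdots$ exploiting the symmetries, which your sketch leaves implicit (the paper also handles the nonlinear coefficients via Lemma \ref{CompactEst} in Lemma \ref{high} rather than Lemma \ref{NonLinEst}, though your choice suffices since the resulting $M_0$-terms are absorbed identically). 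Finally, your closing appeal to Lemma \ref{lowerN} is superfluous and slightly misplaces the origin of $\delta$: Theorem \ref{positive} compares $\|\{{\bf I-P}\}\partial^\alpha f\|_{N^{s,\gamma}}^2$ with $\|{\bf P}\partial^\alpha f\|_{N^{s,\gamma}}^2$ directly (using that $\|{\bf P}\partial^\alpha f\|_{N^{s,\gamma}}^2\lesssim\|\partial^\alpha(a,b,c)\|_{L^2_x}^2$ by \eqref{hydro}), so $\delta$ comes from the constants in the macroscopic estimates after absorbing the $M_0$-terms, and the coercivity $\delta_0$ of $L$ only enters later, in the proof of Theorem \ref{mainGLOBAL}.
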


We prove this theorem by an analysis of the  macroscopic equations and also the local conservation laws.  The system of macroscopic equations comes from first  expressing
the hydrodynamic part ${\bf P}f$ through the microscopic part $\{{\bf I-P}\}f,$ up to the
higher order term $\Gamma (f,f)$ as
\begin{equation}
\{\partial _t+v\cdot \nabla _x\}{\bf P}f=
-\partial _t \{{\bf I-P}\}f
+l(\{{\bf I-P}\}f)+\Gamma (f,f),  \label{macro}
\end{equation}
where 
\begin{equation}
l(\{{\bf I-P}\}f) \eqdef -\{v\cdot \nabla _x+L\}\{{\bf I-P}\}f.  \label{l}
\end{equation}
Notice that we have isolated the time derivative of the microscopic part.

To derive the macroscopic equations for ${\bf P}f$'s coefficients $a^f(t,x)$, $b^f_i(t,x)$ and 
$c^f(t,x)$, we
 use (\ref{hydro}) to expand entries of left hand side of (\ref{macro}) as 
\begin{gather*}
\sum_{i=1}^3\left\{ v_i\partial_i c|v|^2+\{\partial_t c
+
\partial_i b_i\}v_i^2+
\{\partial_t  b_i+\partial_i a\}v_i\right\} \sqrt{\mu 
}
\\
+\sum_{i=1}^3\sum_{j>i}\{\partial_i b_j+\partial_j b_i\}v_iv_j ~ \sqrt{\mu}
+
\partial_t  a
~\sqrt{\mu}, 
\end{gather*}
where  $\partial_i=\partial _{x_i}$ above. For fixed ($t,x),$ this is an expansion of the left hand side of 
(\ref{macro}) with respect to the following basis, $\{ e_k \}_{k=1}^{13}$, which consists of
\begin{gather}
\left( v_i|v|^2\sqrt{\mu } \right)_{1\le i\le 3}, ~ 
\left(v_i^2\sqrt{\mu } \right)_{1\le i\le 3},  ~
\left(v_iv_j\sqrt{\mu } \right)_{1\le i<j\le 3}, ~
\left(v_i\sqrt{\mu } \right)_{1\le i\le 3}, ~
\sqrt{\mu }.  
\label{base}
\end{gather}
From here one obtains the so-called
macroscopic equations 
\begin{eqnarray}
\nabla _xc&=& -\partial_t r_c+ l_c+\Gamma_c  \label{c} \\
\partial_t c+\partial_i b_i &=& -\partial_t r_i+ l_i+\Gamma_i  \label{bi} \\
\partial_i b_j+\partial_j b_i &=& -\partial_t r_{ij}+ l_{ij}+\Gamma_{ij} \quad (i\neq j)  \label{bij} \\
\partial_t b_i+\partial _i a &=& -\partial_t r_{bi}+ l_{bi}+\Gamma_{bi}
\label{ai} \\
\partial_t a &=& -\partial_t r_a+ l_a+\Gamma_a.  \label{adot}
\end{eqnarray}
For notational convenience we define the index set to be
$$
\mathcal{M} \eqdef \left\{c, ~i,~ \left( ij \right)_{i \ne j}, ~bi,~ a\left| ~ i, j = 1,2,3\right. \right\}.
$$
This set $\mathcal{M}$ is just the collection of all indices in the macroscopic equations.  Then for $\ell \in  \mathcal{M}$ we have that 
each  $l_\ell (t,x)$
are the coefficients of 
$l(\{{\bf I-P}\}f)$ with respect to the elements of \eqref{base}; similarly each 
$\Gamma_\ell(t,x)$ 
and
$r_\ell(t,x)$
are the coefficients of $\Gamma(f,f)$ and $\{{\bf I-P}\}f$ respectively.  Precisely, each element $r_\ell$ can be expressed as 
$$
r_\ell = \sum_{k=1}^{13} C_k^\ell \langle \{{\bf I-P}\}f, e_k \rangle.
$$
All of the constants $C_k^\ell$ above can be computed explicitly
although we do not give their precise form herein.  Each of the terms $l_\ell$ and $\Gamma_\ell$ can be computed similarly.  

The second set of equations we consider are the local conservation laws satisfied by $(a^f,b^f,c^f)$.  To derive these we 
 multiply \eqref{Boltz} by the collision invariants $\mathcal{N}$ in \eqref{null}
and integrate only in the velocity variables to obtain 
\begin{eqnarray*}
  \partial_t (a^f+3c^f)+\nabla_x\cdot b^f &=& 0
  \\
  \partial_t b^f+\nabla_x (a^f+5c^f) &=& - \nabla_x\cdot \langle
  v\otimes v\sqrt{\mu},\{{\bf I - P}\}f\rangle
  \\
   \partial_t(3a^f+15c^f)+5\nabla_x\cdot b^f 
   &=& 
   - \nabla_x\cdot  \langle |v|^2v\sqrt{\mu},\{{\bf I - P}\}f\rangle.
\end{eqnarray*}
Above we have used the moment values of the normalized
global Maxwellian $\mu$: 
\begin{eqnarray*}
&&\langle 1, \mu\rangle=1,
\quad
\langle |v_j|^2, \mu\rangle=1,\ \ \langle |v|^2, \mu\rangle=3,
\langle |v_j|^2|v_i|^2, \mu\rangle=1, \ \ j\neq i,\\
&&\langle |v_j|^4, \mu\rangle=3,\ \ \langle |v|^2|v_j|^2,
\mu\rangle=5,\ \ \langle |v|^4, \mu\rangle=15.
\end{eqnarray*}
Comparing the first and third local conservation law results in
\begin{eqnarray}
  \partial_t a^f&=& 
    \frac{1}{2}\nabla_x \cdot \langle
    |v|^2v\sqrt{\mu},\{{\bf I - P}\}f\rangle
  \label{cl.0}
  \\
  \partial_t b^f+\nabla_x (a^f+5c^f) &=& - \nabla_x\cdot \langle
  v\otimes v\sqrt{\mu},\{{\bf I - P}\}f\rangle
  \label{cl.1}
  \\
    \partial_t c^f+\frac{1}{3}\nabla_x\cdot b^f  &=& -
    \frac{1}{6}\nabla_x \cdot \langle
    |v|^2v\sqrt{\mu},\{{\bf I - P}\}f\rangle.
    \label{cl.2}
\end{eqnarray}
These are the local conservation laws that we will study below.
For the rest of this section, we concentrate on a solution $f$ to the Boltzmann equation \eqref{Boltz}.

\begin{lemma}
\label{average}Let $f(t,x,v)$ be the local solution to the Boltzmann equation \eqref{Boltz}
shown to exist in
Theorem \ref{local}
which satisfies 
\eqref{conservation}. Then we have 
\begin{equation}
 \int_{\mathbb{T}^3}~ dx~ a^f(t,x) = \int_{\mathbb{T}^3}~ dx~ b^f(t,x) = \int_{\mathbb{T}^3} ~ dx~ c^f(t,x)  
 =
 0,
 \notag
\end{equation}
where $a^f$, $b^f=[b_1,b_2,b_3],$ $c^f$
are defined in (\ref{hydro}).
\end{lemma}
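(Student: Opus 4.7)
The plan is to combine the conservation law \eqref{conservation} with the orthogonality between $\{\mathbf{I}-\mathbf{P}\}f$ and the null space $\mathcal{N}$ from \eqref{null}, and then solve a small linear system in the three quantities $\int_{\mathbb{T}^3} a^f\,dx$, $\int_{\mathbb{T}^3} b_i^f\,dx$, $\int_{\mathbb{T}^3} c^f\,dx$.

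First I would fix $t\ge 0$ and, for each of the three collision invariants $1$, $v_i$, $|v|^2$, integrate $\sqrt{\mu}\,f(t,x,v)$ against that invariant in $v$. Since $\sqrt{\mu}$, $v_i\sqrt{\mu}$, and $|v|^2\sqrt{\mu}$ all lie in $\mathcal{N}$, the microscopic part drops out by orthogonality and only $\mathbf{P}f$ contributes. Using the explicit expansion \eqref{hydro} together with the Gaussian moment values already listed in the excerpt ($\langle 1,\mu\rangle=1$, $\langle v_i^2,\mu\rangle=1$, $\langle |v|^2,\mu\rangle=3$, $\langle v_iv_j,\mu\rangle=\delta_{ij}$, $\langle v_i|v|^2,\mu\rangle=0$, $\langle |v|^4,\mu\rangle=15$), these velocity integrals reduce to
\begin{align*}
\int_{\mathbb{R}^3_v}\sqrt{\mu}\,f\,dv &= a^f+3c^f,\\
\int_{\mathbb{R}^3_v}v_i\sqrt{\mu}\,f\,dv &= b_i^f,\\
\int_{\mathbb{R}^3_v}|v|^2\sqrt{\mu}\,f\,dv &= 3a^f+15c^f.
\end{align*}

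Next I would integrate each of these identities over $x\in\mathbb{T}^3$. The conservation law \eqref{conservation} then forces
\begin{gather*}
\int_{\mathbb{T}^3}(a^f+3c^f)\,dx=0,\qquad \int_{\mathbb{T}^3}b_i^f\,dx=0,\qquad \int_{\mathbb{T}^3}(3a^f+15c^f)\,dx=0.
\end{gather*}
The $b_i^f$ statement is immediate. For the other two, the linear system
\[
\begin{pmatrix}1 & 3\\ 3 & 15\end{pmatrix}\begin{pmatrix}\int a^f\,dx\\ \int c^f\,dx\end{pmatrix}=\begin{pmatrix}0\\0\end{pmatrix}
\]
has determinant $15-9=6\neq 0$, so both $\int a^f\,dx$ and $\int c^f\,dx$ must vanish, completing the claim.

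There is really no obstacle here once the orthogonality relation is invoked; the only small point worth checking carefully is that, although \eqref{conservation} is stated for $f_0$ and is then said to propagate for strong solutions, one should verify that the local solution produced in Theorem \ref{local} is regular enough to actually conserve these five quantities in time. This is standard: testing \eqref{Boltz} against $\sqrt{\mu}$, $v_i\sqrt{\mu}$, $|v|^2\sqrt{\mu}$ kills the linearized operator $L$ (as they span $\ker L$) and kills the quadratic term $\Gamma(f,f)$ by the usual pre-post collisional symmetrization, so the five spatial integrals above are constant in time and the identity for $t>0$ follows from the assumption at $t=0$.
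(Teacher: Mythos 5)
Your argument is correct and is essentially the paper's own proof: the paper dispatches this lemma by citing the conservation laws \eqref{conservation} together with the same moment computations of $\mu$ used to derive \eqref{cl.0}--\eqref{cl.2}, which is exactly your reduction of $\int a^f+3c^f$, $\int b_i^f$, $\int 3a^f+15c^f$ to zero and the invertibility of the resulting $2\times 2$ system. Your closing remark about propagating \eqref{conservation} in time is a reasonable extra check, though the lemma already assumes the solution satisfies \eqref{conservation}.
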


The proof of this lemma 
  follows directly from the conservation of mass, momentum and energy \eqref{conservation}, using the cancellation that we just used in deriving the 
 conservation laws \eqref{cl.0}, \eqref{cl.1}, and \eqref{cl.2}.  In the following two Lemmas, we establish the required estimates on the linear microscopic piece and then we estimate the non-linear higher order term.

%
%

\begin{lemma}
\label{linear}
For any of the microscopic terms, $l_\ell$, from the macroscopic equations
\[
\sum_{\ell \in \mathcal{M}} \|l_\ell\|_{H^{N-1}_x}
\lesssim
\sum_{|\alpha| \le N}\|\{{\bf I-P}\} \partial^\alpha f\|_{L^2_{\gamma+2s}(\mathbb{T}^3_x\times\mathbb{R}^3_v)}.
\]
\end{lemma}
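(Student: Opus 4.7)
}
The plan is to use the defining formula \eqref{l}, namely $l(\{{\bf I-P}\}f) = -(v\cdot\nabla_x+L)\{{\bf I-P}\}f$, together with the fact that every $l_\ell(t,x)$ is a fixed linear combination of inner products
\[
\bigl\langle l(\{{\bf I-P}\}f),\, e_k \bigr\rangle, \qquad k = 1,\dots,13,
\]
where the basis $\{e_k\}$ from \eqref{base} consists precisely of functions of the form $\text{(polynomial in }\ext{v}\text{ of degree at most two)}\cdot \sqrt{\mu}$.  This means each $e_k$ is an admissible $e_l$ in the hypothesis of Lemma \ref{CompactEst}. Thus it suffices to control $\langle v\cdot\nabla_x g,e_k\rangle$ and $\langle L g,e_k\rangle$ in $H^{N-1}_x$ when $g=\{{\bf I-P}\}f$.

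First I would handle the transport contribution. Since $\partial^\alpha$ is purely spatial, it commutes with $v\cdot\nabla_x$ and with the velocity pairing against $e_k$, so for every $|\alpha|\le N-1$
\[
\partial^\alpha \bigl\langle v\cdot\nabla_x g,\,e_k\bigr\rangle \;=\; \nabla_x\cdot\bigl\langle v\,e_k,\,\partial^\alpha g\bigr\rangle.
\]
A Cauchy--Schwartz estimate in $v$, using that $v\,e_k$ is rapidly decaying and in particular lies in $L^2_{-(\gamma+2s)}(\mathbb{R}^3_v)$, yields the pointwise bound
\[
\bigl|\partial^\alpha \langle v\cdot\nabla_x g,e_k\rangle(t,x)\bigr| \;\lesssim\; \sum_{|\beta|\le N}\bigl|\partial^\beta g(t,x,\cdot)\bigr|_{L^2_{\gamma+2s}},
\]
which is admissible after taking $L^2_x$.

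Next I would handle the collisional contribution. Because $L$ acts only in $v$, it commutes with $\partial^\alpha$, and by \eqref{LinGam}
\[
\langle L\,\partial^\alpha g,\,e_k\rangle = -\langle \Gamma(M,\partial^\alpha g),e_k\rangle - \langle \Gamma(\partial^\alpha g,M),e_k\rangle.
\]
Both $M$ and $e_k$ have the form required by Lemma \ref{CompactEst}, so applying that lemma (once in the form $\langle \Gamma(g,h),e_l\rangle$ with $e_l=e_k$, and once in the form $\langle \Gamma(g,e_l),f\rangle$ with $e_l=M$, $f=e_k$) and fixing $\eta=\delta=1$, I get
\[
\bigl|\langle L\,\partial^\alpha g,e_k\rangle(t,x)\bigr| \;\lesssim\; \bigl|\partial^\alpha g(t,x,\cdot)\bigr|_{1,1} \;\lesssim\; \bigl|\partial^\alpha g(t,x,\cdot)\bigr|_{L^2_{\gamma+2s}},
\]
where the last step uses $\gamma+2s\ge 0$ so that the constant weight is dominated by $\ang{v}^{\gamma+2s}$ up to a fixed multiple. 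This is again admissible after taking $L^2_x$ and summing over $|\alpha|\le N-1$.

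Combining the two estimates, summing over $\ell\in\mathcal{M}$ (a finite set), substituting $g=\{{\bf I-P}\}f$, and taking the $L^2_x$ norm gives the claimed inequality. The only place requiring any care is the accounting of one extra spatial derivative produced by the transport term, which is exactly absorbed by summing over $|\alpha|\le N$ rather than $|\alpha|\le N-1$ on the right-hand side; this is precisely the form of the statement. No nontrivial obstacle arises because all estimates come directly from Lemma \ref{CompactEst} and trivial Cauchy--Schwartz bounds against the fixed Schwartz functions $v\,e_k$.
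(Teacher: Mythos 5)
Your proposal is correct and follows essentially the same route as the paper: split $l(\{{\bf I-P}\}f)$ via \eqref{l} into the transport part, handled by Cauchy--Schwartz in $v$ against the rapidly decaying $e_k$ (with the extra spatial derivative absorbed by summing up to $|\alpha|\le N$), and the $L$ part, handled by \eqref{LinGam} together with Lemma \ref{CompactEst} at $\eta=\delta=1$ and the bound $|\cdot|_{1,1}\lesssim|\cdot|_{L^2_{\gamma+2s}}$ from $\gamma+2s\ge 0$. The only differences are cosmetic (the paper bounds the transport piece first in $L^2_{x,v}$ before passing to the weighted norm), so no further comment is needed.
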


\begin{proof} 
Recall 
$\{ e_k \}_{k=1}^{13}$, the basis in (\ref
{base}).
For fixed $(t,x)$,  it suffices to estimate  the $H^{N-1}_x$ norm of
$
\langle l(\{{\bf I-P}\}f), e_k \rangle.
$
We  use (\ref{l}) to expand out
$$
\langle \partial^\alpha l(\{{\bf I-P}\}f), e_k \rangle
=
-\langle v\cdot \nabla _x (\{{\bf I-P}\}\partial^\alpha f), e_k \rangle
-\langle L (\{{\bf I-P}\}\partial^\alpha f), e_k \rangle.
$$
Now for any $|\alpha |\le N-1$
\begin{gather*}
\| \langle v\cdot \nabla _x (\{{\bf I-P}\}\partial^\alpha f), e_k \rangle\|_{L^2_x}^2 
\lesssim 
\int_{\mathbb{T}^3_x\times \mathbb{R}_{v}^3} ~ dx dv ~
|e_k(v)| ~ 
|v|^2 ~ |\{{\bf I-P}\}\nabla_x\partial ^\alpha f|^2
\\
\lesssim 
\|\{{\bf I-P}\}\nabla _x\partial^\alpha f\|^2_{L^2_{x,v}}.
\end{gather*}
Here we have used the exponential decay of $e_k(v)$.

It remains to estimate the linear operator $L$.  With the expression from \eqref{LinGam} and Lemma \ref{CompactEst}, we have the following
\begin{gather*}
\| \langle L (\{{\bf I-P}\}\partial^\alpha f), e_k \rangle\|_{L^2_x}^2 
\lesssim 
\left\| ~\nsm \{{\bf I-P}\}\partial^\alpha f \nsm_{\eta,\delta}~  \nsm M \nsm_{\delta,\eta} \right\|^2_{L^2_{x}}
\\
\lesssim 
\left\| \{{\bf I-P}\}\partial^\alpha f   \right\|^2_{L^2_{\gamma+2s}(\mathbb{T}^3_x\times\mathbb{R}^3_v)},
\quad 
(\text{taking}
\quad \eta = \delta = 1).
\end{gather*} 
This completes the proof of our estimates for the $l_\ell$.
\end{proof}

We now estimate coefficients of the higher order term $\Gamma(f,f)$:

\begin{lemma}
\label{high}Let (\ref{smallL}) be valid for some $M_0>0.$ Then  for $\NgE$ we have
\[
\sum_{\ell \in \mathcal{M}}
\|\Gamma_\ell\|_{H^{N}_x}
\lesssim
\sqrt{M_0}\sum_{|\alpha |\le N} \| \partial ^\alpha f\|_{L^2_{\gamma+2s}(\mathbb{T}^3_x\times\mathbb{R}^3_v)}.
\]
\end{lemma}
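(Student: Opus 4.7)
The plan is to reduce the bound for $\Gamma_\ell$, which is by construction a finite linear combination of inner products of the form $\ang{\Gamma(f,f),e_k}$ with $e_k$ running over the basis \eqref{base}, to a pointwise (in $x$) application of Lemma \ref{CompactEst}. First I will distribute the spatial derivatives via the Leibniz rule,
\begin{equation*}
\partial^\alpha \ang{\Gamma(f,f),e_k} = \sum_{\alpha_1 + \alpha_2 = \alpha} \binom{\alpha}{\alpha_1} \ang{\Gamma(\partial^{\alpha_1} f,\partial^{\alpha_2} f),e_k},
\end{equation*}
noting that each basis function in \eqref{base} is indeed a Maxwellian multiplied by a polynomial of degree at most two in $\ext{v}=(v,\tfrac{1}{2}|v|^2)$ (so that e.g.\ $v_i|v|^2\sqrt{\mu}= 2 v_i \ext{v}_4 \sqrt\mu$ is quadratic in $\ext v$), and therefore Lemma \ref{CompactEst} applies verbatim with $e_l=e_k$.

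Next I will apply Lemma \ref{CompactEst} pointwise in $x$ together with the identity \eqref{expand} for the infimum of $\nsm g\nsm_{\eta,\delta}\nsm h\nsm_{\delta,\eta}$ over $\eta,\delta>0$. This yields the pointwise (in $x$) inequality
\begin{equation*}
\bigl|\ang{\Gamma(g,h),e_k}(x)\bigr| \lesssim \nsm g(x,\cdot)\nsm_{L^2_v}\,\nsm h(x,\cdot)\nsm_{L^2_{\gamma+2s}} + \nsm g(x,\cdot)\nsm_{L^2_{\gamma+2s}}\,\nsm h(x,\cdot)\nsm_{L^2_v}.
\end{equation*}
The crucial feature is that in each product one factor is unweighted while the other carries the full weight $\gamma+2s$; this precisely matches the dichotomy needed between the smallness hypothesis \eqref{smallL}, which controls only the unweighted $L^2_v$-based energy, and the weighted dissipation norm on the right side of the desired conclusion.

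Then I will split the Leibniz sum $\alpha_1+\alpha_2=\alpha$ according to which index is smaller, placing the lower-order factor in $L^\infty_x$ via the Sobolev embedding $H^2(\mathbb{T}^3_x)\subset L^\infty(\mathbb{T}^3_x)$ and the higher-order factor in $L^2_x$. Since $\NgE$, we always have $\min(|\alpha_1|,|\alpha_2|)\le \lfloor N/2\rfloor \le N-2$, so the embedding applies legitimately to the lower-order factor. Assuming, WLOG, $|\alpha_1|\le N-2$,
\begin{align*}
\|\ang{\Gamma(\partial^{\alpha_1}f,\partial^{\alpha_2}f),e_k}\|_{L^2_x}
&\lesssim \|\partial^{\alpha_1}f\|_{L^\infty_x L^2_v}\|\partial^{\alpha_2}f\|_{L^2_x L^2_{\gamma+2s}} \\
&\qquad + \|\partial^{\alpha_1}f\|_{L^\infty_x L^2_{\gamma+2s}}\|\partial^{\alpha_2}f\|_{L^2_x L^2_v}.
\end{align*}
In each of the two terms, the factor carrying the unweighted $L^2_v$ inner norm is, after the Sobolev embedding, dominated by $\|f\|_{L^2_v H^N_x}\le \sqrt{M_0}$, while the remaining factor is one summand in $\sum_{|\beta|\le N}\|\partial^\beta f\|_{L^2_{\gamma+2s}}$. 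Summing over the finitely many $\ell\in\mathcal M$, over the Leibniz splittings, and over $|\alpha|\le N$ then yields the claim.

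The main subtlety, as I see it, is to produce the pointwise-in-$x$ estimate that exactly separates weighted from unweighted $L^2_v$ norms: without this separation one could not exploit the smallness of $\|f\|_{L^2_v H^N_x}$ to gain the prefactor $\sqrt{M_0}$ while keeping the weighted dissipation norm on the right. This separation is precisely what the $(\eta,\delta)$-norm and identity \eqref{expand} are engineered to provide, so everything else in the argument reduces to the Leibniz rule, Sobolev embedding, and bookkeeping.
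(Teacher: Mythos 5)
Your proposal is correct and follows essentially the same route as the paper's proof: reduce to $\|\langle\Gamma(f,f),e_k\rangle\|_{H^N_x}$, apply the Leibniz rule and Lemma \ref{CompactEst} pointwise in $x$ together with \eqref{expand}, put the factor with fewer derivatives in $L^\infty_x$ via $H^2(\mathbb{T}^3_x)\subset L^\infty(\mathbb{T}^3_x)$, and bound the unweighted factor by $\|f\|_{L^2_vH^N_x}\le\sqrt{M_0}$ using \eqref{smallL}. The only cosmetic slip is that in your second term the Sobolev embedding is needed on the weighted $L^\infty_x L^2_{\gamma+2s}$ factor (making it a finite sum of summands of the dissipation sum) rather than on the unweighted one, which does not affect the argument.
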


\begin{proof}
As in the proof of Lemma \ref{linear}, 
it will be sufficient to estimate the $H^{N}_x$ norm of $\langle \Gamma(f,f), e_k\rangle$.  We apply Lemma \ref{CompactEst}  to see that
\begin{gather*}
\left\| \langle \Gamma(f,f), e_k\rangle\right\|_{H^{N}_x}
\lesssim
\sum_{|\alpha |\le N}\sum_{\alpha_1 \le \alpha}
\left\|
 \nsm \partial^{\alpha - \alpha_1} f \nsm_{\delta,\eta} \nsm \partial^{\alpha_1} f \nsm_{\eta,\delta} 
\right\|_{L^2_x}.
\end{gather*}
We use \eqref{expand}, and take the supremum over the term with fewer derivatives to obtain
\begin{gather*}
\lesssim
 \| f\|_{L_v^2H^{N}_x}
\sum_{|\alpha |\le N} \| \partial ^\alpha f\|_{L^2_{\gamma+2s}}
\\
\lesssim
\sqrt{M_0}\sum_{|\alpha |\le N} \| \partial ^\alpha f\|_{L^2_{\gamma+2s}}.
\end{gather*}
The last inequalities follow from
 the Sobolev embedding $L^\infty_x \supset H_x^2$.
\end{proof}

We now prove the crucial positivity of $L$ for small solution $f(t,x,v)$ to the Boltzmann equation \eqref{Boltz}. The
conservation laws \eqref{conservation} will play an important
role. \\

\noindent {\it Proof of Theorem \ref{positive}}. 
We first of all notice from 
\eqref{hydro}
that
$$
 \| { \bf  P  } \partial^\alpha f (t) \|_{N^{s,\gamma}}^2
\lesssim
  \|\partial^\alpha a(t)\|^2_{L^2_x}
  +
  \|\partial^\alpha b (t)\|^2_{L^2_x}
  +
\|  \partial^\alpha c(t)\|^2_{L^2_x}.
$$
Thus it will be sufficient to bound each of the terms on the right side above by 
$ \| \{ {\bf I - P } \} \partial^\alpha f (t)\|_{N^{s,\gamma}}^2$ 
plus the time derivative of the interaction functional, which is defined in \eqref{mainINTERACTION}.  Indeed, our  proof is devoted to establishing the following
\begin{gather}
\notag
\|a (t)\|_{H^N_x}^2
+
\|b(t)\|_{H^N_x}^2
+
\|c(t) \|_{H^N_x}^2
\lesssim
\sum_{|\alpha |\le N}
\|\{{\bf I-P}\} \partial ^\alpha  f(t)\|_{L^2_{\gamma + 2s}}^2
\\
+
M_0\sum_{|\alpha |\le N}\|\partial ^\alpha f(t)\|_{L^2_{\gamma + 2s}}^2
+
\frac{d\mathcal{I}(t)}{dt}.
\label{claimH}
\end{gather}
Clearly the second term on the right above can be neglected 
because of
\begin{gather*}
\sum_{|\alpha |\le N}\|\partial^\alpha f(t)\|_{L^2_{\gamma + 2s}}^2
\lesssim
\sum_{|\alpha |\le N}
\|{\bf P}\partial ^\alpha f(t)\|_{L^2_{\gamma + 2s}}^2
+
\sum_{|\alpha |\le N}\|\{{\bf I-P}\}\partial
^\alpha f(t)\|_{L^2_{\gamma + 2s}}^2
\\
\lesssim
\left\{\|a(t) \|_{H^N_x}
+
\|b(t)\|_{H^N_x}
+
\|c(t) \|_{H^N_x}\right\}^2
+
\sum_{|\alpha |\le N}
\|\{{\bf I-P}\}\partial ^\alpha f(t)\|_{L^2_{\gamma + 2s}}^2.
\end{gather*}
We have used 
\eqref{hydro}.
Thus \eqref{claimH} will imply Theorem \ref{positive} when $M_0$ is sufficiently
small.

To prove (\ref{claimH}), we estimate each of $a$, $b$, and $c$ individually with spatial derivatives of order $0<|\alpha |\le N$.  Then at the end of the proof we estimate the pure $L^2_x$ norm of $a$, $b$, and $c$ in a uniform way.
  We first  estimate $a(t,x)$. 
Consider any $|\alpha |\le N-1$.
By taking $\partial_i \partial^\alpha$ of (\ref{ai}) and summing over $i$, we get 
\begin{equation}
-\Delta \partial ^\alpha a
=
\frac{d}{dt} \left( \nabla \cdot \partial ^\alpha b \right)
+
\sum_{i=1}^3\left(\partial_t \partial_i\partial ^\alpha r_{bi} -\partial_i\partial ^\alpha \{l_{bi}+\Gamma_{bi}\} \right).  \label{div}
\end{equation}
Multiply with $\partial ^\alpha a$ to (\ref{div}) and integrate over $dx$ to obtain
\begin{gather*}
\|\nabla \partial ^\alpha a\|^2_{L^2_x}
\le 
\frac{d}{dt} \int_{\mathbb{T}^3}  ~ dx ~ \left( \nabla \cdot \partial ^\alpha b \right)\partial^\alpha a(t,x)
+
\frac{d}{dt} \int_{\mathbb{T}^3}  ~ dx ~ \partial_i\partial ^\alpha r_{bi} ~
\partial^\alpha a(t,x)
\\
- \int_{\mathbb{T}^3}  ~ dx ~ \left( \nabla \cdot \partial ^\alpha b \right)
\partial_t \partial^\alpha a(t,x)
-
 \int_{\mathbb{T}^3}  ~ dx ~ \partial_i\partial ^\alpha r_{bi} ~
\partial_t \partial^\alpha a(t,x)
\\
+
\|\partial ^\alpha \{l_{bi}+\Gamma_{bi}\}\|_{L^2_x}
\|\nabla \partial ^\alpha a\|_{L^2_x}.
\end{gather*}
Above we implicitly sum over $i=1,2,3$. We define the interaction functional
$$
\mathcal{I}_a^\alpha(t) \eqdef 
\int_{\mathbb{T}^3}  ~ dx ~ \left( \nabla \cdot \partial ^\alpha b \right)\partial^\alpha a(t,x)
+
\sum_{i=1}^3
\int_{\mathbb{T}^3}  ~ dx ~ \partial_i\partial ^\alpha r_{bi} ~
\partial^\alpha a(t,x).
$$
We also use the local conservation law \eqref{cl.0}, to see that for any $\eta >0$, we have
\begin{gather*}
 \int_{\mathbb{T}^3}  ~ dx ~ \left\{ \left| \left( \nabla \cdot \partial ^\alpha b \right)
\partial_t \partial^\alpha a(t,x) \right|
+ \left| \partial_i\partial ^\alpha r_{bi} ~
\partial_t \partial^\alpha a(t,x) \right| \right\}
\\
\le \eta \| \nabla \cdot \partial ^\alpha b \|^2_{L^2_x} + C_\eta  \| \{ {\bf I - P} \} \nabla \partial^\alpha f \|^2_{L^2_{x,v}}.
\end{gather*}
We combine these last few estimates with Lemma \ref{linear} and \ref
{high} to see that  
\begin{gather}
\notag
\|\nabla \partial ^\alpha a\|^2_{L^2_x}
-
\eta \| \nabla \cdot \partial ^\alpha b \|^2_{L^2_x} 
\lesssim
C_\eta
\sum_{|\alpha| \le N}\|\{{\bf I-P}\} \partial^\alpha f\|^2_{L^2_{\gamma+2s}}
+
\frac{d \mathcal{I}_a^\alpha  }{dt}
\\
+
M_0\sum_{|\alpha |\le N} \| \partial ^\alpha f\|_{L^2_{\gamma+2s}}^2.
\label{mainAest}
\end{gather}
This will be our main estimate for $a(t,x)$ with derivatives.

Next we estimate $c(t,x)$ from  (\ref{c}), with $|\alpha |\le N-1$.   We notice that
\begin{gather*}
\|\nabla \partial^\alpha c\|^2_{L^2_x}
\le 
C\left\{\|\partial ^\alpha l_c\|^2_{L^2_x}+\|\partial^\alpha \Gamma_c\|^2_{L^2_x} \right\} 
- 
\frac{d}{dt}\int_{\mathbb{T}^3} ~ dx ~ \partial ^\alpha r_c(t,x) ~\cdot \nabla_x \partial ^\alpha c(t,x)
\\
+
\int_{\mathbb{T}^3} ~ dx ~ \nabla_x \cdot\partial ^\alpha r_c(t,x) ~  \partial ^\alpha \partial_t c(t,x).
\end{gather*}
We now define another interaction functional as
$$
\mathcal{I}_c^\alpha(t) \eqdef 
- \int_{\mathbb{T}^3} ~ dx ~ \partial ^\alpha r_c(t,x) ~ \cdot \nabla_x \partial ^\alpha c(t,x).
$$
Next we use the conservation law \eqref{cl.2} to obtain the following estimate
\begin{gather*}
\int_{\mathbb{T}^3} ~ dx ~ \left| \nabla_x \partial ^\alpha r_c(t,x) ~ \cdot \partial ^\alpha \partial_t c(t,x) \right|
\le \eta \| \nabla \cdot \partial ^\alpha b \|^2_{L^2_x} + C_\eta  \| \{ {\bf I - P} \} \nabla \partial^\alpha f \|^2_{L^2_{x,v}},
\end{gather*}
which holds for any $\eta >0$.
Combining these with Lemmas \ref{linear} and \ref
{high}, we see that
\begin{gather}
\notag
\|\nabla \partial ^\alpha c\|^2_{L^2_x}
-
\eta \| \nabla \cdot \partial ^\alpha b \|^2_{L^2_x} 
\lesssim
C_\eta
\sum_{|\alpha| \le N}\|\{{\bf I-P}\} \partial^\alpha f\|^2_{L^2_{\gamma+2s}}
+
\frac{d \mathcal{I}_c^\alpha  }{dt}
\\
+
M_0\sum_{|\alpha |\le N} \| \partial ^\alpha f\|_{L^2_{\gamma+2s}}^2.
\label{mainCest}
\end{gather}
This will be our main estimate for $c(t,x)$ with derivatives.

The last term to estimate with derivatives is $\nabla \partial ^\alpha b$.  
Suppose that 
$|\alpha |\le N-1$, take $\partial_j$ of (\ref{bi}) and (\ref{bij}) and  sum on $j$.  It was shown in a 
nontrivial calculation from \cite{MR2000470}, using 
 the
 elliptic structure of these equations
  use several symmetries, that
\begin{gather*}
\Delta \partial ^\alpha b_i
=
-\partial_i\partial_i\partial ^\alpha b_i
+
2\partial_i\partial ^\alpha l_i
+
2\partial_i\partial ^\alpha
\Gamma_i 
\\
+
\left\{
\sum_{j\neq i}
-\partial_i \partial^\alpha l_j-\partial_i\partial ^\alpha \Gamma_j
+
 \partial_j\partial ^\alpha l_{ij}
 +
 \partial_j\partial^\alpha \Gamma_{ij}
 -\partial_t \partial_{j} \partial^\alpha r_{ij}
\right\}.
\end{gather*}
We then multiply the whole expression by 
$\partial^\alpha b_i$ and integrate by parts to yield 
\begin{gather}
\label{sb}
\|\nabla \partial ^\alpha b_i\|^2
\le C\left\{\sum_{\ell \in \mathcal{M}} 
\|\partial ^\alpha l_\ell\|^2
+
\|\partial ^\alpha \Gamma_\ell\|^2
\right\} 
+
\sum_{j\neq i} \int_{\mathbb{T}^3} ~ dx ~ \partial_{j} \partial^\alpha r_{ij}
\partial_t \partial^\alpha b_i
\\
- 
\frac{d}{dt} \sum_{j\neq i} \int_{\mathbb{T}^3} ~ dx ~ \partial_{j} \partial^\alpha r_{ij}\partial^\alpha b_i.
\notag
\end{gather}
We  define the last component of the interaction functional as
$$
\mathcal{I}_b^\alpha(t) \eqdef 
- 
 \sum_{j\neq i} \int_{\mathbb{T}^3} ~ dx ~ \partial_{j} \partial^\alpha r_{ij}\partial^\alpha b_i.
$$
With the conservation law \eqref{cl.1}, we estimate the term with a time derivative as
\begin{gather*}
\sum_{j\neq i} \int_{\mathbb{T}^3} ~ dx ~  \left|  \partial_{j} \partial^\alpha r_{ij} 
\partial_t \partial^\alpha b_i(t,x) \right| 
\le \eta \left\{ \| \nabla  \partial ^\alpha a \|^2_{L^2_x} + \| \nabla  \partial ^\alpha c \|^2_{L^2_x}  \right\}
\\
+ C_\eta  \| \{ {\bf I - P} \} \nabla \partial^\alpha f \|^2_{L^2_{x,v}},
\end{gather*}
which once again holds for any $\eta >0$.
Combining these  last few estimates with Lemmas \ref{linear} and \ref
{high}, we obtain
\begin{gather}
\notag
\|\nabla \partial ^\alpha b\|^2_{L^2_x}
-
 \eta \left\{ \| \nabla  \partial ^\alpha a \|^2_{L^2_x} + \| \nabla  \partial ^\alpha c \|^2_{L^2_x}  \right\}
\lesssim
C_\eta
\sum_{|\alpha| \le N}\|\{{\bf I-P}\} \partial^\alpha f\|^2_{L^2_{\gamma+2s}}
+
\frac{d \mathcal{I}_b^\alpha  }{dt}
\\
+
M_0\sum_{|\alpha |\le N} \| \partial ^\alpha f\|_{L^2_{\gamma+2s}}^2.
\label{mainBest}
\end{gather}
This is our main estimate for $b(t,x)$ with derivatives.

Now, with $\mathcal{I}_a^\alpha(t)$, $\mathcal{I}_b^\alpha(t)$ and $\mathcal{I}_c^\alpha(t)$ defined just above, we define the total interaction functional as
\begin{gather}
\mathcal{I}(t) 
\eqdef 
\sum_{|\alpha|\le N-1}
\left\{
\mathcal{I}_a^\alpha(t)
+
\mathcal{I}_b^\alpha(t)
+
\mathcal{I}_c^\alpha(t)\right\}.
\label{mainINTERACTION}
\end{gather}
Choosing for instance $\eta = 1/8$ and collecting 
\eqref{mainAest}, \eqref{mainCest}, \eqref{mainBest}, we have established 
\begin{gather}
\notag
 \| \nabla  \partial ^\alpha a \|^2_{H^{N-1}_x}
 +
\|\nabla \partial ^\alpha b\|^2_{H^{N-1}_x}
+ \| \nabla  \partial ^\alpha c \|^2_{H^{N-1}_x} 
\lesssim
\sum_{|\alpha| \le N}\|\{{\bf I-P}\} \partial^\alpha f\|^2_{L^2_{\gamma+2s}}
+
\frac{d \mathcal{I}  }{dt}
\\
+
M_0\sum_{|\alpha |\le N} \| \partial ^\alpha f\|_{L^2_{\gamma+2s}}^2.
\notag
\end{gather}
To finish \eqref{claimH}, it remains to estimate the terms without derivatives.

With the Poincar\'{e} inequality and Lemma \ref{average}, 
$a$ itself is bounded by 
\begin{gather*}
\|a\|
\lesssim 
\|\nabla a\|+\left|\int_{\mathbb{T}^3}  ~ dx ~ a \right| 
=
\|\nabla a\|.
\end{gather*}
This is also bounded by the right side of  \eqref{claimH} by the last estimate above.
The estimates for $b_i(t,x)$ and $c(t,x)$ without derivatives are exactly the same.  This completes the main estimate \eqref{claimH} and the proof.
\qed \\

We are now ready to prove global in time solutions to  \eqref{Boltz} exist. 

\subsection{Global Existence}  With the coercivity estimate for non-linear solutions from Theorem \ref{positive}, we prove these solutions must be global with a continuity argument. \\

\noindent {\it Proof of Theorem \ref{mainGLOBAL}.}   We first fix $M_0\le 1$ such that both
Theorem \ref{local}  and \ref{positive} are valid.    
For any $C'>0$ we can choose a large constant $C_1>0$  such that
$$
\|f(t)\|^2_{L^2_vH^N_x} 
\le
\left( C_1 + 1 \right)\|f(t)\|^2_{L^2_vH^N_x} - C'\mathcal{I}(t)
\lesssim
 \|f(t)\|^2_{L^2_vH^N_x}. 
$$
Notice $C_1$  only depends upon the structure of the interaction functional and $C'$, but not on $f(t,x,v)$.
We then define the equivalent instant energy functional by
$$
\mathcal{E}(t)  \eqdef \left( C_1 + 1 \right)\|f(t)\|^2_{L^2_vH^N_x} - C' \mathcal{I}(t).
$$
Then 
$
\mathcal{E}(t)
\approx
\|f(t)\|^2_{L^2(\mathbb{R}^3_v; H^N(\mathbb{T}^3_x))}.
$
Now
choose
$
M_1\le \frac{M_0}{2}
$
and
consider initial data 
\[
\mathcal{E}(0)  \le M_1<M_0. 
\]
From Theorem \ref{local}, we may denote $T>0$ so that 
\[
T=\sup\{t\ge 0: \mathcal{E}(t)  \le 2 M_1\}>0. 
\]
We now  take the spatial derivatives of $\partial^{\alpha}$ 
of \eqref{Boltz} to obtain
\begin{gather}
\label{initial}
\frac{1}{2}\frac{d}{dt}\|f(t)\|^2_{L^2_vH^N_x} 
+
\sum_{|\alpha |\le N}\left( L\partial^{\alpha}f,\partial^{\alpha}f\right) 
=
\sum_{|\alpha |\le N}
\left( \partial^{\alpha} \Gamma (f,f), \partial^{\alpha} f \right) . 
\end{gather}
By Lemma \ref{NonLinEst} we have
$$
\sum_{|\alpha |\le N}
\left( \partial^{\alpha} \Gamma (f,f), \partial^{\alpha} f \right) 
\lesssim
\|f(t)\|_{L^2_vH^N_x} \mathcal{D}(t).
$$
Notice that for $0\le t\le T,$ by our choice of $M_1,$
\[
\mathcal{E}(t)  \le 2  M_1\le M_0. 
\]
Thus \eqref{smallL} is valid.  Now with Lemma \ref{lowerN} and then Theorem \ref{positive} we have
\begin{gather*}
\sum_{|\alpha |\le N}\left( L\partial^{\alpha}f,\partial^{\alpha}f\right) 
\ge \delta_0 \sum_{|\alpha |\le N} \| \{ {\bf I - P } \}\partial^{\alpha} h \|_{N^{s,\gamma}}^2
\\
\ge
\frac{\delta_0}{2} \sum_{|\alpha |\le N} \| \{ {\bf I - P } \}\partial^{\alpha} h \|_{N^{s,\gamma}}^2
+
\frac{\delta_0 \delta}{2} \sum_{|\alpha |\le N} \| {\bf P } \partial^{\alpha} h \|_{N^{s,\gamma}}^2
-\frac{\delta_0C_2 }{2} \frac{d\mathcal{I}(t)}{dt}. 
\end{gather*}
With $\tilde{\delta} \eqdef \min\left\{ \frac{\delta_0}{2} , \frac{\delta_0 \delta}{2} \right\}>0$
and
$C' \eqdef \frac{\delta_0C_2 }{2}>0$,
we  conclude that
\begin{gather}
\frac{d}{dt}\left\{\|f(t)\|^2_{L^2_vH^N_x} - C'\mathcal{I}(t)\right\} +\tilde{\delta}  \mathcal{D}(t)  
\lesssim
\|f(t)\|_{L^2_vH^N_x} \mathcal{D}(t).
\nonumber
\end{gather}
We multiply 
\eqref{initial} 
by $C_1$ and add it to this differential inequality to conclude
\begin{gather}
\frac{d\mathcal{E}(t)}{dt} +\tilde{\delta}   \mathcal{D}(t)  
\le C_*
\|f(t)\|_{L^2_vH^N_x} \mathcal{D}(t),
\quad 
C_*>0.
\nonumber
\end{gather}
In the last step we have used the positivity of $L \ge 0$ as shown in Lemma \ref{lowerN}.
Suppose
\[
M_1\eqdef \min \left\{ \frac{\tilde{\delta}^2}{8C_{*}^{2}},\frac{M_0}{2}
\right\}.
\]
We now use the definitions of $M_1$ and $T$
to obtain for $0\le t\le T$ that
\begin{gather}
\frac{d \mathcal{E}(t)}{dt} +\tilde{\delta}   \mathcal{D}(t)  
\le C_*
\|f(t)\|_{L^2_vH^N_x} \mathcal{D}(t)
\le
C_*\sqrt{2  M}\mathcal{D}(t)
\le
\frac{\tilde{\delta} }2 \mathcal{D}(t).  
\nonumber
\end{gather}
Therefore, an integration  over $0\le t\le \tau<T$ 
yields 
\begin{gather}
\mathcal{E}(\tau)
+
\frac{\tilde{\delta} }{2}\int_0^\tau ~ d\tau ~ \mathcal{D}(\tau)
\le 
 \mathcal{E}(0) \nonumber 
\le  M_1<2 M_1.  
\nonumber
\end{gather}
Since $\mathcal{E}(\tau)$ is continuous in $\tau$, 
$
\mathcal{E}(T)\le M_1
$ 
if $T<\infty .$ This is a contradiction to the definition of $T$, thus $T=\infty .$ 
The time decay follows from
$
\mathcal{D}(t)
\gtrsim
\| f(t) \|^2_{L^2_vH^N_x}.
$
 \qed

\section*{Appendix:  Carleman's representation and the dual formulation}
\label{secAPP:HSr}

In this appendix we develop two Carleman \cite{MR1555365} type representations which are used crucially in our main text.  
We consider the general expression
$$
\tilde{\mathcal{C}}(v_*) = \int_{\mathbb{R}^3}dv ~\Phi(|v-v_*|) 
\int_{\mathbb{S}^{2}} d\sigma ~
b\left(\ip{k}{\sigma}\right) ~
H(v,v_*, v^\prime, v^\prime_*),
$$
with $k = \frac{v - v_*}{|v - v_*|}$ and the usual post-collisional velocities  $(v^\prime, v^\prime_*)$ given by \eqref{sigma}.  The functions $b$ and $\Phi$ are generally given by \eqref{kernelP} and \eqref{kernelQ}.  For the purposes of deriving the expression in Proposition \ref{carlemanA} it suffices to suppose that both of these functions are smooth.  The general expressions can then be deduced from these formulas by the usual approximation procedures.
We have the following representation formula

%


\begin{proposition} 
\label{carlemanA}
Let $H: \mathbb{R}^3 \times \mathbb{R}^3 \times \mathbb{R}^3 \times \mathbb{R}^3 \to \mathbb{R}$ be a smooth, rapidly decaying function at infinity.  Then we have
 \begin{gather*}
\tilde{\mathcal{C}}(v_*) 
= 
4
\int_{\mathbb{R}^3}dv '~
\int_{E^{v'}_{v_*}}d\pi_{v} ~
\frac{\Phi(|v-v_*|)  }{|v' - v_*|}
\frac{b\left(\ang{\frac{v-v_*}{|v-v_*|}, \frac{2v' - v - v_*}{|2v' - v - v_*|} } \right)}{|v - v_*|}
H.
  \end{gather*}
  Above 
  $
  H = H(v, v_*, v' , v+v_*- v')
$
and
  $
  E^{v'}_{v_*}
  $
  is the hyperplane
  $$
  E^{v'}_{v_*} \eqdef \left\{ v'\in \mathbb{R}^3 : \ang{v_*-v', v - v'} =0 \right\}.
  $$
Then $d\pi_{v} $ denotes the Lebesgue measure on this hyperplane.
\end{proposition}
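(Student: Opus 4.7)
The plan is to exploit the fundamental geometric identity that for velocities related by \eqref{sigma} the vectors $v - v'$ and $v_* - v'$ are orthogonal. I would first observe, directly from \eqref{sigma}, that $v - v' = \tfrac{1}{2}|v-v_*|(k - \sigma)$ and $v_* - v' = -\tfrac{1}{2}|v-v_*|(k + \sigma)$, so $\ang{v - v', v_* - v'} = 0$ by $|k| = |\sigma| = 1$. Consequently, as $(v,\sigma)$ ranges over $\R^3 \times \sph$ with $v_*$ fixed, the pair $(v, v')$ sweeps out, bijectively up to a null set, the five-dimensional variety
\[
\Sigma \eqdef \set{(v, v') \in \R^3 \times \R^3}{ \ang{v-v', v_* - v'} = 0 }.
\]
This $\Sigma$ admits two natural foliations: for fixed $v$, the slice in $v'$ is the sphere $S_v$ of radius $\tfrac{1}{2}|v-v_*|$ centered at $\tfrac{1}{2}(v+v_*)$ (traced by $\sigma$ through \eqref{sigma}); for fixed $v'$, the slice in $v$ is precisely the plane $E^{v'}_{v_*}$ of the statement.

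Next I would change variables in two stages. First, the map $\sigma \mapsto v'$ from $\sph$ onto $S_v$ is a homothety of ratio $\tfrac{1}{2}|v-v_*|$, so trivially $d\sigma = 4 |v-v_*|^{-2} \, dS(v')$, where $dS$ is intrinsic surface measure on $S_v$. Second, I would interchange the roles of $v$ and $v'$ by a coarea argument applied to $F(v, v') \eqdef \ang{v-v', v_* - v'}$, whose zero set is $\Sigma$. A direct calculation yields $\nabla_v F = v_* - v'$ and $\nabla_{v'} F = 2v' - v - v_*$, hence $|\nabla_v F| = |v' - v_*|$ globally, while on $\Sigma$ (where $|2v'-v-v_*| = |v-v_*|$) one has $|\nabla_{v'} F| = |v - v_*|$. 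Applying Fubini in both orders to the distributional identity $\int \phi(v,v') \, \delta(F) \, dv \, dv'$ produces the Jacobian relation
\[
\int_{\R^3} dv \int_{S_v} dS(v') \, \phi \; = \; \int_{\R^3} dv' \int_{E^{v'}_{v_*}} d\pi_v \, \frac{|v - v_*|}{|v' - v_*|} \, \phi,
\]
valid for any sufficiently decaying $\phi$.

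Combining the two Jacobians gives the change of variables $dv \, d\sigma = \tfrac{4}{|v-v_*|\,|v'-v_*|} \, dv' \, d\pi_v$, which I would substitute into $\tilde{\mathcal{C}}(v_*)$. On $\Sigma$ the collision direction is recovered as $\sigma = (2v' - v - v_*)/|v - v_*| = (2v'-v-v_*)/|2v' - v - v_*|$, turning the argument of $b$ into the one in the statement, while momentum conservation $v_*' = v + v_* - v'$ takes care of the fourth slot of $H$. All weights and the overall factor $4$ then line up with the claimed representation.

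The main obstacle is making this coarea/distributional manipulation fully rigorous. The cleanest route, bypassing distributions, is to verify directly that the map $\Psi \colon \R^3 \times \sph \to \Sigma$, $(v, \sigma) \mapsto (v, v')$, is a smooth diffeomorphism off the measure-zero degenerate locus $\{v = v_*\} \cup \{\sigma = -k\}$, and to compute its Jacobian determinant in local coordinates adapted to each of the two foliations of $\Sigma$. Both computations reproduce the ratio $|v - v_*|/|v'-v_*|$ above, so ordinary Fubini on the smooth part of $\Sigma$ delivers the identity globally. Since the proposition assumes $H$ smooth and rapidly decaying and $\Phi$, $b$ smooth, convergence and the exchange of orders of integration are harmless; the general (possibly singular) kernels in \eqref{kernelP}--\eqref{kernelQ} are then recovered by the approximation procedure mentioned just before the statement.
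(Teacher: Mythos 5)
Your argument is correct: the orthogonality $\ang{v-v',v_*-v'}=0$, the dilation Jacobian $d\sigma = 4|v-v_*|^{-2}dS(v')$, the gradients $|\nabla_v F|=|v'-v_*|$ and $|\nabla_{v'}F|=|2v'-v-v_*|=|v-v_*|$ on the zero set, and the resulting relation $dv\,d\sigma = \tfrac{4}{|v-v_*|\,|v'-v_*|}\,dv'\,d\pi_v$ all check out and reproduce the stated weights and the factor $4$. This is essentially the same route the paper has in mind: the text does not write out a proof but defers to the standard derivation of Carleman-type representations (citing Gamba--Panferov--Villani), which is precisely this change of variables from $(v,\sigma)$ to $(v',v)$ via the two foliations of the orthogonality variety, so your proposal simply makes that argument explicit.
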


We also derive a Carleman representation for 
$$
\mathcal{C}(v) = \int_{\mathbb{R}^3}dv_* ~\Phi(|v-v_*|) 
\int_{\mathbb{S}^{2}} d\sigma ~
b\left(\ip{k}{ \sigma}\right) ~
H(v, v_*, v^\prime, v^\prime_*),
$$
with the same notation and the same comments as in the last case.


\begin{proposition} 
\label{carlemanV2}
  Let $H: \mathbb{R}^3 \times \mathbb{R}^3 \times \mathbb{R}^3 \times \mathbb{R}^3 \to \mathbb{R}$ be a smooth, rapidly decaying function at infinity.  Then we have
 \begin{gather*}
\mathcal{C}(v) 
= 
4
\int_{\mathbb{R}^3}dv' ~
\int_{E^{v}_{v'}}d\pi_{v'_*} ~
\frac{\Phi(|2v - v' - v'_*|) }{|v - v'|}
\frac{b\left(\ang{\frac{2v - v' - v'_*}{|2v - v' - v'_*|}, \frac{v' - v'_*}{|v' - v'_*|} }\right) }{|v' - v'_*|} ~
H.
  \end{gather*}
  Above 
  $
  H = H(v, v'_*+ v' - v, v' , v_*' ),
  $
and
  $
  E^{v}_{v'}
  $
  is the hyperplane
  $$
  E^{v}_{v'} \eqdef \left\{ v'_*\in \mathbb{R}^3 : \ang{v' - v, v_*' -v} =0 \right\}.
  $$
Then $d\pi_{v'} $ denotes the Lebesgue measure on this hyperplane.
\end{proposition}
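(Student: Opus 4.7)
My plan is to derive Proposition \ref{carlemanV2} via a direct change of variables, parallel in spirit to the argument for Proposition \ref{carlemanA}. After the translation $u = v - v_*$ (unit Jacobian), the integral becomes $\int_{\mathbb{R}^3}du\,\Phi(|u|)\int_{\mathbb{S}^2}d\sigma\,b(\ang{u/|u|,\sigma})\,H$. I will introduce the new variables $w \eqdef v' - v$ and $w_* \eqdef v'_* - v$. The collision formulas \eqref{sigma} give $w = \tfrac{|u|}{2}(\sigma - k)$ and $w_* = -\tfrac{|u|}{2}(\sigma + k)$ with $k = u/|u|$, from which one checks $\ang{w, w_*} = 0$, $|u|^2 = |w|^2 + |w_*|^2$, as well as the inversion $u = -(w + w_*)$, $\sigma = (w - w_*)/|u|$. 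Both parameter spaces $\{(u,\sigma) \in \mathbb{R}^3\times\mathbb{S}^2\}$ and $\{(w, w_*) \in \mathbb{R}^3 \times w^\perp\}$ carry $5$ real degrees of freedom, so this map should be a local diffeomorphism.

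The heart of the computation is the Jacobian. I will use polar coordinates on both sides: write $du = |u|^2 d|u|\,dk$ (with $dk$ the $\mathbb{S}^2$-measure), and parametrize $\sigma$ relative to $k$ by the polar angle $\theta = \arccos\ang{k,\sigma}$ and an azimuth $\omega \in k^\perp\cap\mathbb{S}^1$, so $d\sigma = \sin\theta\,d\theta\,d\omega$. Correspondingly, write $w = |w|\,w_1$ and $w_* = |w_*|\,w_{1,*}$ with $w_1\in\mathbb{S}^2$ and $w_{1,*}\in w_1^\perp\cap\mathbb{S}^1$, giving $dw\,d\pi_{w_*} = |w|^2\,|w_*|\,d|w|\,d|w_*|\,dw_1\,dw_{1,*}$. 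The identities $|w| = |u|\sin(\theta/2)$ and $|w_*| = |u|\cos(\theta/2)$ yield the two-dimensional Jacobian $d|u|\,d\theta = \tfrac{2}{\sqrt{|w|^2+|w_*|^2}}\,d|w|\,d|w_*|$ after a routine two-variable computation. For the angular part, I will observe that at fixed $\theta$ the explicit formulas $w_1 = \cos(\theta/2)\omega - \sin(\theta/2)k$ and $w_{1,*} = -\cos(\theta/2)k - \sin(\theta/2)\omega$ express the orthonormal $2$-frame $(w_1, w_{1,*})$ as a rotation of $(k,\omega)$ within the common $2$-plane they span, hence an isometry of the Stiefel manifold $V_2(\mathbb{R}^3)$; in particular it preserves the product measure $dk\,d\omega = dw_1\,dw_{1,*}$. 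Assembling the radial and angular pieces by Fubini then gives $du\,d\sigma = \tfrac{4}{|w|\,|u|}\,dw\,d\pi_{w_*}$.

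Substituting back $v' = v + w$, $v'_* = v + w_*$ (so that $v_* = v' + v'_* - v$), and using $|v - v_*| = |v' - v'_*| = \sqrt{|w|^2+|w_*|^2} = |2v - v' - v'_*|$ together with $\ang{k,\sigma} = \ang{\tfrac{2v-v'-v'_*}{|2v-v'-v'_*|},\tfrac{v'-v'_*}{|v'-v'_*|}}$ will produce the stated identity, with the overall factor of $4$ coming from the Jacobian and the weights $\tfrac{1}{|v-v'|\,|v'-v'_*|}$ being exactly $\tfrac{1}{|w|\,|u|}$. The smoothness assumption on $b, \Phi, H$ justifies the pointwise change of variables, and the general case then follows from the standard approximation procedure indicated at the opening of the appendix. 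The main obstacle is the careful bookkeeping in the Jacobian, particularly verifying that the $\theta$-dependent rotation carrying $(k,\omega)$ to $(w_1, w_{1,*})$ really is measure-preserving on the Stiefel manifold for each fixed $\theta$; once this is in hand, Fubini separates the radial and angular integrations cleanly and the rest is algebra.
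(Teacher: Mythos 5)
Your proposal is correct and is essentially the route the paper itself has in mind: the paper does not write out this computation but derives the representation ``in the same way'' as the classical Carleman formula and defers to \cite{09-GPV}, and your change of variables $(v_*,\sigma)\mapsto(v',v'_*)$ with unit Jacobian for $u=v-v_*$, the radial Jacobian $d|u|\,d\theta=\tfrac{2}{|u|}\,d|w|\,d|w_*|$, and the angular identification $(k,\omega)\mapsto(w_1,w_{1,*})$ gives exactly $du\,d\sigma=\tfrac{4}{|u|\,|w|}\,dw\,d\pi_{w_*}$, hence the stated kernel with the factor $4$ and the weights $\tfrac{1}{|v-v'|\,|v'-v'_*|}$. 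The one step to phrase carefully is the measure preservation on the Stiefel manifold: identifying an orthonormal $2$-frame with an element of $SO(3)$, your $\theta$-dependent map is right multiplication by a fixed rotation (it fixes the plane spanned by the frame), so it preserves the invariant measure $dk\,d\omega$ -- equivalently, it commutes with the transitive left $SO(3)$-action -- which validates your isometry claim.
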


Our expressions above may be at some degree of variance from the usual Carleman representation, however they are of the same form and derived in the same way; a clear proof can be found in \cite{09-GPV}.  
With these expressions we will derive a Dual Representation for the non-linear operator \eqref{gamma0}.

\subsection*{Dual Representation}
We initially suppose that $\int_{\sph} d \sigma ~ |b( \ang{k, \sigma} )| < \infty$ and that the kernel $b$ has mean zero, i.e., $\int_{\sph} d \sigma ~ b(\ang{k,\sigma}) = 0$. 
Then after the pre-post change of variables we can express \eqref{gamma0} as
\begin{gather*}
 \ang{\Gamma(g,h),f}= \int_{\R^3} \! \! dv  \int_{\R^3} \! \! dv_* \int_{\sph} \! \! d \sigma
 ~ \Phi(|v-v_*|) b \left( \ang{k, \sigma} \right) g_* h \left(M_*' f' - M_* f \right) \nonumber 
  \\
  = \int_{\R^3} dv ~ \int_{\R^3} dv_* ~ \int_{\sph}  d \sigma ~ \Phi(|v-v_*|) b \left( \ang{k, \sigma} \right) g_* h M_*' f'. \nonumber 
 \end{gather*}
This follows from the vanishing of $\int_{\sph} b( \ang{k, \sigma} ) d \sigma$.  With Proposition \ref{carlemanA}, this is
 \begin{gather*}
 = 
 4 \int_{\R^3} \!  \!dv_* \! \int_{\R^3} \! \! dv' \!   \int_{E^{v'}_{v_*}}d\pi_{v} ~
\Phi(|v-v_*|)  
\frac{b\left(\ang{\frac{v-v_*}{|v-v_*|}, \frac{2v' - v - v_*}{|2v' - v - v_*|} } \right)}{|v' - v_*|~|v - v_*|} g_* h M_*' f'. 
 \nonumber
\end{gather*}
As usual above $d\pi_{v}$ is Lebesgue measure on the two-dimensional plane $E_{v_*}^{v'}$ passing through $v'$ with normal $v' - v_*$, and of course $v$ is the variable of integration.  In the above formulas, we take $M'_* = M(v+v_* - v')$.  From the identity
\[ \ang{\frac{v-v_*}{|v-v_*|}, \frac{2v' - v - v_*}{|2v' - v - v_*|} } = \frac{|v' - v_*|^2 - |v - v'|^2}{|v - v'|^2 + |v' - v_*|^2}, \]
we observe that
\begin{align*}
\int_{E_{v_*}^{v'}} d\pi_{v} ~ &  b \left( \ang{\frac{v-v_*}{|v-v_*|}, \frac{2v' - v - v_*}{|2v' - v - v_*|} } \right) \frac{|v'-v_*|^2}{|v-v_*|^4} \\
& = \int_{0}^{2 \pi} d \theta \int_0^\infty r \ dr \ b \left( \frac{|v'-v_*|^2 - r^2}{|v' - v_*|^2 + r^2} \right) \frac{|v' - v_*|^2}{(r^2 + |v'-v_*|^2)^2} = 0,
\end{align*}
by a change of variables since $\int_{-1}^1 dt \ b(t) = 0$ and 
\[ \frac{d}{dr} \left[ \frac{|v'-v_*|^2 - r^2}{|v' - v_*|^2 + r^2} \right] = 
\frac{-4 r |v' - v_*|^2}{(r^2 + |v' - v_*|^2)^2}. \] 
In particular, this implies
\[ 
\int_{E_{v_*}^{v'}} \frac{d \pi_v \ \Phi(|v'-v_*|)}{|v'-v_*| |v - v_*|}  ~    b \left( \ang{\frac{v-v_*}{|v-v_*|}, \frac{2v' - v - v_*}{|2v' - v - v_*|} } \right) \frac{|v'-v_*|^3}{|v-v_*|^3} ~ g_* h' M_* f' = 0. 
\]
We subtract this expression from the Carleman representation just written for $\ang{\Gamma(g,h),f}$, to see that $\ang{\Gamma(g,h),f}$ must also equal
\begin{equation*}
\begin{split}
%
4  \int_{\R^3}   dv_* ~ \int_{\R^3}  dv' ~ \int_{E_{v_*}^{v'}} \! \!  &  \frac{d \pi_v}{|v' - v_*| |v-v_*|}  ~ 
b \left( \ang{\frac{v-v_*}{|v-v_*|}, \frac{2v' - v - v_*}{|2v' - v - v_*|} } \right)
~  g_* f' 
\\
& \times
\left( \Phi(v-v_*) h M_*'    -   \Phi(v'-v_*) \frac{|v' - v_*|^3}{|v-v_*|^3} h' M_* \right). 
\end{split}
\end{equation*}
This will be called the ``dual representation.''

The claim is now that this representation holds even when the mean value of the singular kernel $b(\ang{k, \sigma})$ from \eqref{kernelQ}  is not zero.  To see this claim, suppose $b$ integrable but without mean zero.  Then define
\[ 
b_\epsilon(t) = b(t) - \epsilon^{-1} \ind_{[1-\epsilon,1]}(t) \int_{-1}^1 b(t) dt. \
\]
As a function on $\sph$, $b_\epsilon$ will clearly have a vanishing integral.  However, given arbitrary $f$, $g$ and $h$ which are Schwartz functions, it is not hard to see that
\[ 
\ang{\Gamma(g,h),f} - \ang{\Gamma_\epsilon(g,h),f} \rightarrow 0,
\quad
\epsilon \rightarrow 0. 
\]
Above $\Gamma_\epsilon$ is the non-linear term \eqref{gamma0} formed with $b_\epsilon(t)$ in place of $b(t)$.  This convergence holds 
because  cancellation guarantees that the integrand vanishes on the set defined by $\ang{k,\sigma} = 1$.  Moreover, an additional cutoff argument shows that the equality also holds provided that $b(t)$ satisfies \eqref{kernelQ}; 
the higher-order cancellation is preserved because $\frac{|v'-v_*|}{|v-v_*|}$ possesses radial symmetry in $v-v'$.

The  ``dual representation'' deserves its name
because if one defines
\begin{align*}
T_g f (v) & \eqdef \int_{\R^3} dv_* \int_{\sph} d \sigma B g_* \left(M_*' f' - M_* f \right) 
\\
T_g^* h (v') & \eqdef 4 \int_{\R^3} dv_* \int_{E_{v_*}^{v'}} \frac{d \pi_{v}}{|v-v_*| |v'-v_*|}  b \left( \ang{\frac{v-v_*}{|v-v_*|}, \frac{2v' - v - v_*}{|2v' - v - v_*|} } \right)  g_*
\\
& \hspace{100pt}  \times \left( \Phi(v-v_*) M_*' h   -   {\Phi(v'-v_*)} \frac{|v' - v_*|^3}{|v-v_*|^3} M_* h' \right),
\end{align*} 
then 
\begin{equation}
\label{3dualZ}
\ang{\Gamma(g,h),f}
=
\ang{ T_g f ,h} 
= 
\ang{ f, T_g^* h }. 
\end{equation}
Note that the last inner product above represents an integration over $dv'$ whereas the first two inner products above represent integrations over $dv$.

The advantage of this representation is that $T_g f$ and $T_g^* h$ both depend on $g$ in a fairly elementary way.  
This will allow, for example, the trilinear form $\ang{\Gamma(g,h),f}$ to be understood as a superposition of bilinear forms in $h$ and $f$.


\begin{bibdiv}
\begin{biblist}

\bib{MR1696193}{article}{
   author={Alexandre, Radjesvarane},
   title={Une d\'efinition des solutions renormalis\'ees pour l'\'equation
   de Boltzmann sans troncature angulaire},
   journal={C. R. Acad. Sci. Paris S\'er. I Math.},
   volume={328},
   date={1999},
   number={11},
   pages={987--991},
   issn={0764-4442},
}

\bib{MR1851391}{article}{
   author={Alexandre, Radjesvarane},
   title={Some solutions of the Boltzmann equation without angular cutoff},
   journal={J. Statist. Phys.},
   volume={104},
   date={2001},
   number={1-2},
   pages={327--358},
   issn={0022-4715},
}

\bib{MR2284553}{article}{
   author={Alexandre, Radjesvarane},
   title={Integral estimates for a linear singular operator linked with the
   Boltzmann operator. I. Small singularities $0<v<1$},
   journal={Indiana Univ. Math. J.},
   volume={55},
   date={2006},
   number={6},
   pages={1975--2021},
   issn={0022-2518},
}

\bib{MR1765272}{article}{
   author={Alexandre, R.},
   author={Desvillettes, L.},
   author={Villani, C.},
   author={Wennberg, B.},
   title={Entropy dissipation and long-range interactions},
   journal={Arch. Ration. Mech. Anal.},
   volume={152},
   date={2000},
   number={4},
   pages={327--355},
   issn={0003-9527},
}

\bib{MR1857879}{article}{
   author={Alexandre, R.},
   author={Villani, C.},
   title={On the Boltzmann equation for long-range interactions},
   journal={Comm. Pure Appl. Math.},
   volume={55},
   date={2002},
   number={1},
   pages={30--70},
   issn={0010-3640},
}

\bib{MR2037247}{article}{
   author={Alexandre, R.},
   author={Villani, C.},
   title={On the Landau approximation in plasma physics},
   journal={Ann. Inst. H. Poincar\'e Anal. Non Lin\'eaire},
   volume={21},
   date={2004},
   number={1},
   pages={61--95},
   issn={0294-1449},
}

\bib{MR2462585}{article}{
   author={Alexandre, R.},
   author={Morimoto, Y.},
   author={Ukai, S.},
   author={Xu, C.-J.},
   author={Yang, T.},
   title={Uncertainty principle and kinetic equations},
   journal={J. Funct. Anal.},
   volume={255},
   date={2008},
   number={8},
   pages={2013--2066},
   issn={0022-1236},
}

\bib{arXiv:0909.1229v1}{article}{
    author = {Alexandre, Radjesvarane},
        author = {Morimoto, Y.},
            author = {Ukai, Seiji},
        author = {Xu, Chao-Jiang},
        author = {Yang, Tong},
	title = {Regularizing effect and local existence for non-cutoff Boltzmann equation},
    eprint = {arXiv:0909.1229v1},
   date={2009},
    journal={preprint},
}

\bib{MR630119}{article}{
   author={Arkeryd, Leif},
   title={Intermolecular forces of infinite range and the Boltzmann
   equation},
   journal={Arch. Rational Mech. Anal.},
   volume={77},
   date={1981},
   number={1},
   pages={11--21},
   issn={0003-9527},
}

\bib{MR679196}{article}{
   author={Arkeryd, Leif},
   title={Asymptotic behaviour of the Boltzmann equation with infinite range
   forces},
   journal={Comm. Math. Phys.},
   volume={86},
   date={1982},
   number={4},
   pages={475--484},
   issn={0010-3616},
}

\bib{MR1128328}{article}{
   author={Bobyl{\"e}v, A. V.},
   title={The theory of the nonlinear spatially uniform Boltzmann equation
   for Maxwell molecules},
   conference={
      title={Mathematical physics reviews, Vol.\ 7},
   },
   book={
      series={Soviet Sci. Rev. Sect. C Math. Phys. Rev.},
      volume={7},
      publisher={Harwood Academic Publ.},
      place={Chur},
   },
   date={1988},
   pages={111--233},
}

\bib{MR2476678}{article}{
   author={Bernis, Laurent},
   author={Desvillettes, Laurent},
   title={Propagation of singularities for classical solutions of the
   Vlasov-Poisson-Boltzmann equation},
   journal={Discrete Contin. Dyn. Syst.},
   volume={24},
   date={2009},
   number={1},
   pages={13--33},
   issn={1078-0947},
}

\bib{MR1798557}{article}{
   author={Boudin, Laurent},
   author={Desvillettes, Laurent},
   title={On the singularities of the global small solutions of the full
   Boltzmann equation},
   journal={Monatsh. Math.},
   volume={131},
   date={2000},
   number={2},
   pages={91--108},
   issn={0026-9255},
}

\bib{MR1555365}{article}{
   author={Carleman, Torsten},
   title={Sur la th\'eorie de l'\'equation int\'egrodiff\'erentielle de
   Boltzmann},
   language={French},
   journal={Acta Math.},
   volume={60},
   date={1933},
   number={1},
   pages={91--146},
   issn={0001-5962},
}

\bib{MR1313028}{book}{
   author={Cercignani, Carlo},
   title={The Boltzmann equation and its applications},
   series={Applied Mathematical Sciences},
   volume={67},
   publisher={Springer-Verlag},
   place={New York},
   date={1988},
   pages={xii+455},
   isbn={0-387-96637-4},
}

\bib{MR1307620}{book}{
   author={Cercignani, Carlo},
   author={Illner, Reinhard},
   author={Pulvirenti, Mario},
   title={The mathematical theory of dilute gases},
   series={Applied Mathematical Sciences},
   volume={106},
   publisher={Springer-Verlag},
   place={New York},
   date={1994},
   pages={viii+347},
   isbn={0-387-94294-7},
}

\bib{MR2506070}{article}{
   author={Chen, Yemin},
   author={Desvillettes, Laurent},
   author={He, Lingbing},
   title={Smoothing effects for classical solutions of the full Landau
   equation},
   journal={Arch. Ration. Mech. Anal.},
   volume={193},
   date={2009},
   number={1},
   pages={21--55},
   issn={0003-9527},
}

\bib{MR1324404}{article}{
   author={Desvillettes, Laurent},
   title={About the regularizing properties of the non-cut-off Kac equation},
   journal={Comm. Math. Phys.},
   volume={168},
   date={1995},
   number={2},
   pages={417--440},
   issn={0010-3616},
}

\bib{MR2525118}{article}{
   author={Desvillettes, Laurent},
   author={Mouhot, Cl{\'e}ment},
   title={Stability and uniqueness for the spatially homogeneous Boltzmann
   equation with long-range interactions},
   journal={Arch. Ration. Mech. Anal.},
   volume={193},
   date={2009},
   number={2},
   pages={227--253},
   issn={0003-9527},
}

\bib{MR2038147}{article}{
   author={Desvillettes, Laurent},
   author={Wennberg, Bernt},
   title={Smoothness of the solution of the spatially homogeneous Boltzmann
   equation without cutoff},
   journal={Comm. Partial Differential Equations},
   volume={29},
   date={2004},
   number={1-2},
   pages={133--155},
   issn={0360-5302},
}

\bib{MR1737547}{article}{
   author={Desvillettes, Laurent},
   author={Villani, C{\'e}dric},
   title={On the spatially homogeneous Landau equation for hard potentials.
   I. Existence, uniqueness and smoothness},
   journal={Comm. Partial Differential Equations},
   volume={25},
   date={2000},
   number={1-2},
   pages={179--259},
   issn={0360-5302},
}

\bib{MR1014927}{article}{
   author={DiPerna, R. J.},
   author={Lions, P.-L.},
   title={On the Cauchy problem for Boltzmann equations: global existence
   and weak stability},
   journal={Ann. of Math. (2)},
   volume={130},
   date={1989},
   number={2},
   pages={321--366},
   issn={0003-486X},
}

\bib{MR2420519}{article}{
   author={Duan, Renjun},
   title={On the Cauchy problem for the Boltzmann equation in the whole
   space: global existence and uniform stability in $L\sp 2\sb {\xi}(H\sp
   N\sb x)$},
   journal={J. Differential Equations},
   volume={244},
   date={2008},
   number={12},
   pages={3204--3234},
   issn={0022-0396},
}

\bib{MR2435186}{article}{
   author={Duan, Renjun},
   author={Li, Meng-Rong},
   author={Yang, Tong},
   title={Propagation of singularities in the solutions to the Boltzmann
   equation near equilibrium},
   journal={Math. Models Methods Appl. Sci.},
   volume={18},
   date={2008},
   number={7},
   pages={1093--1114},
   issn={0218-2025},
}

\bib{MR707957}{article}{
   author={Fefferman, Charles L.},
   title={The uncertainty principle},
   journal={Bull. Amer. Math. Soc. (N.S.)},
   volume={9},
   date={1983},
   number={2},
   pages={129--206},
   issn={0273-0979},
}

\bib{09-GPV}{article}{
   author={Gamba, I. M.},
   author={Panferov, V.},
   author={Villani, C.},
   title={Upper Maxwellian Bounds for the Spatially
Homogeneous Boltzmann Equation},
   journal={Arch. Ration. Mech. Anal.},
   volume={194},
   date={2009},
   pages={253--282},
   }

\bib{MR1379589}{book}{
   author={Glassey, Robert T.},
   title={The Cauchy problem in kinetic theory},
   publisher={Society for Industrial and Applied Mathematics (SIAM)},
   place={Philadelphia, PA},
   date={1996},
   pages={xii+241},
   isbn={0-89871-367-6},
}

\bib{MR0156656}{article}{
   author={Grad, Harold},
   title={Asymptotic theory of the Boltzmann equation. II},
   conference={
      title={Rarefied Gas Dynamics (Proc. 3rd Internat. Sympos., Palais de
      l'UNESCO, Paris, 1962), Vol. I},
   },
   book={
      publisher={Academic Press},
      place={New York},
   },
   date={1963},
   pages={26--59},
}

\bib{sgNonCut2}{article}{
   author={Gressman, Philip T.},
      author={Strain, Robert M.},
   title={Global Classical Solutions of the Boltzmann Equation with Long-Range Interactions and Soft-Potentials},
   journal={in preparation},
}

\bib{MR1946444}{article}{
   author={Guo, Yan},
   title={The Landau equation in a periodic box},
   journal={Comm. Math. Phys.},
   volume={231},
   date={2002},
   number={3},
   pages={391--434},
   issn={0010-3616},
}
		
\bib{MR2013332}{article}{
   author={Guo, Yan},
   title={Classical solutions to the Boltzmann equation for molecules with
   an angular cutoff},
   journal={Arch. Ration. Mech. Anal.},
   volume={169},
   date={2003},
   number={4},
   pages={305--353},
   issn={0003-9527},
}

\bib{MR2000470}{article}{
   author={Guo, Yan},
   title={The Vlasov-Maxwell-Boltzmann system near Maxwellians},
   journal={Invent. Math.},
   volume={153},
   date={2003},
   number={3},
   pages={593--630},
   issn={0020-9910},
}

\bib{MR2095473}{article}{
   author={Guo, Yan},
   title={The Boltzmann equation in the whole space},
   journal={Indiana Univ. Math. J.},
   volume={53},
   date={2004},
   number={4},
   pages={1081--1094},
   issn={0022-2518},
}

\bib{MR760333}{article}{
   author={Illner, Reinhard},
   author={Shinbrot, Marvin},
   title={The Boltzmann equation: global existence for a rare gas in an
   infinite vacuum},
   journal={Comm. Math. Phys.},
   volume={95},
   date={1984},
   number={2},
   pages={217--226},
   issn={0010-3616},
}

\bib{Jang2009VMB}{article}{
   author={Jang, Juhi},
   title={Vlasov-Maxwell-Boltzmann diffusive limit},
   journal={Arch. Ration. Mech. Anal.},
   volume={184},
   date={2009},
   number={2},
   pages={531--584},
}

\bib{MR2221254}{article}{
   author={Klainerman, S.},
   author={Rodnianski, I.},
   title={A geometric approach to the Littlewood-Paley theory},
   journal={Geom. Funct. Anal.},
   volume={16},
   date={2006},
   number={1},
   pages={126--163},
   issn={1016-443X},
}

\bib{MR1649477}{article}{
   author={Lions, Pierre-Louis},
   title={R\'egularit\'e et compacit\'e pour des noyaux de collision de
   Boltzmann sans troncature angulaire},
   journal={C. R. Acad. Sci. Paris S\'er. I Math.},
   volume={326},
   date={1998},
   number={1},
   pages={37--41},
   issn={0764-4442},
}

\bib{MR2043729}{article}{
   author={Liu, Tai-Ping},
   author={Yang, Tong},
   author={Yu, Shih-Hsien},
   title={Energy method for Boltzmann equation},
   journal={Phys. D},
   volume={188},
   date={2004},
   number={3-4},
   pages={178--192},
   issn={0167-2789},
}

\bib{MR0063956}{article}{
   author={Morgenstern, Dietrich},
   title={General existence and uniqueness proof for spatially homogeneous
   solutions of the Maxwell-Boltzmann equation in the case of Maxwellian
   molecules},
   journal={Proc. Nat. Acad. Sci. U. S. A.},
   volume={40},
   date={1954},
   pages={719--721},
}

\bib{MR2476686}{article}{
   author={Morimoto, Yoshinori},
   author={Ukai, Seiji},
   author={Xu, Chao-Jiang},
   author={Yang, Tong},
   title={Regularity of solutions to the spatially homogeneous Boltzmann
   equation without angular cutoff},
   journal={Discrete Contin. Dyn. Syst.},
   volume={24},
   date={2009},
   number={1},
   pages={187--212},
   issn={1078-0947},
}

\bib{MR2254617}{article}{
   author={Mouhot, Cl{\'e}ment},
   title={Explicit coercivity estimates for the linearized Boltzmann and
   Landau operators},
   journal={Comm. Partial Differential Equations},
   volume={31},
   date={2006},
   number={7-9},
   pages={1321--1348},
   issn={0360-5302},
}

\bib{MR2322149}{article}{
   author={Mouhot, Cl{\'e}ment},
   author={Strain, Robert M.},
   title={Spectral gap and coercivity estimates for linearized Boltzmann
   collision operators without angular cutoff},
   journal={J. Math. Pures Appl. (9)},
   volume={87},
   date={2007},
   number={5},
   pages={515--535},
   issn={0021-7824},
       eprint = {arXiv:math.AP/0607495},
}

\bib{MR2320408}{article}{
    AUTHOR = {Muscalu, Camil} author={Pipher, Jill}, author={Tao, Terence}, author={Thiele,
              Christoph},
     TITLE = {Multi-parameter paraproducts},
   JOURNAL = {Rev. Mat. Iberoam.},
  FJOURNAL = {Revista Mathem\'atica Iberoamericana},
    VOLUME = {22},
      YEAR = {2006},
    NUMBER = {3},
     PAGES = {963--976},
      ISSN = {0213-2230},
   MRCLASS = {42B25 (26D15 47G10)},
}

\bib{MR0636407}{article}{
   author={Pao, Young Ping},
   title={Boltzmann collision operator with inverse-power intermolecular
   potentials. I, II},
   journal={Comm. Pure Appl. Math.},
   volume={27},
   date={1974},
   pages={407--428; ibid. 27 (1974), 559--581},
   issn={0010-3640},
}

\bib{MR0252961}{book}{
    author = {Stein, Elias M.},
     title = {Topics in harmonic analysis related to the
              {L}ittlewood-{P}aley theory. },
    series = {Annals of Mathematics Studies, No. 63},
  publisher = {Princeton University Press},
   address = {Princeton, N.J.},
      year = {1970},
     pages = {viii+146},
}

\bib{MR0092943}{article}{
   author={Stein, E. M.},
   author={Weiss, G.},
   title={Interpolation of operators with change of measures},
   journal={Trans. Amer. Math. Soc.},
   volume={87},
   date={1958},
   pages={159--172},
   issn={0002-9947},
}
		
\bib{MR2209761}{article}{
   author={Strain, Robert M.},
   author={Guo, Yan},
   title={Almost exponential decay near Maxwellian},
   journal={Comm. Partial Differential Equations},
   volume={31},
   date={2006},
   number={1-3},
   pages={417--429},
   issn={0360-5302},
}

\bib{MR0363332}{article}{
   author={Ukai, Seiji},
   title={On the existence of global solutions of mixed problem for
   non-linear Boltzmann equation},
   journal={Proc. Japan Acad.},
   volume={50},
   date={1974},
   pages={179--184},
   issn={0021-4280},
}

\bib{MR839310}{article}{
   author={Ukai, Seiji},
   title={Local solutions in Gevrey classes to the nonlinear Boltzmann
   equation without cutoff},
   journal={Japan J. Appl. Math.},
   volume={1},
   date={1984},
   number={1},
   pages={141--156},
   issn={0910-2043},
}

\bib{MR1650006}{article}{
   author={Villani, C{\'e}dric},
   title={On a new class of weak solutions to the spatially homogeneous
   Boltzmann and Landau equations},
   journal={Arch. Rational Mech. Anal.},
   volume={143},
   date={1998},
   number={3},
   pages={273--307},
   issn={0003-9527},
}

\bib{MR1942465}{article}{
    author={Villani, C{\'e}dric},
     title={A review of mathematical topics in collisional kinetic theory},
 booktitle={Handbook of mathematical fluid dynamics, Vol. I},
     pages={71\ndash 305},
          book={
 publisher={North-Holland},
     place={Amsterdam},
        },
      date={2002},
}

\bib{villani-2006}{article}{
    author={Villani, C{\'e}dric},
  title = {Hypocoercivity},
       pages={1\ndash 172},
          book={
 publisher={To appear in Memoirs of the AMS},
     place={in press},
        },
      date={2009},
         eprint = {arXiv:math/0609050v1},
}

\bib{WCUh52}{article}{
   author={Wang Chang, C. S.},
   author={Uhlenbeck, G. E.},
   author={de Boer, J.},
   title={On the Propagation of Sound in Monatomic Gases},
   book={
      publisher={Univ. of Michigan Press, },
      place={Ann Arbor, Michigan},
   },
   date={1952},
   pages={1--56},
   eprint = {http://deepblue.lib.umich.edu/},
}

\end{biblist}
\end{bibdiv}

\end{document}